\tikzset{main node/.style={circle,fill=blue!20,draw,minimum size=0.2cm,inner sep=0pt},
            }
\theoremstyle{plain}
\newtheorem{theorem}{Theorem}
\newtheorem{lemma}[theorem]{Lemma}
\newtheorem{corollary}[theorem]{Corollary}
\newtheorem{proposition}[theorem]{Proposition}
\theoremstyle{definition}
\newtheorem{definition}[theorem]{Definition}
\newtheorem{example}[theorem]{Example}
\newtheorem{remark}[theorem]{Remark}
\newcommand{\Z}{\mathbb{Z}}
\newcommand{\F}{\mathbb{F}}
\newcommand{\LD}{\gamma^{LD}}
\newcommand{\SLD}{\gamma^{SLD}}
\newcommand{\DLD}{\gamma^{DLD}}
\author{Ville Junnila\affiliationmark{1}
  \and Tero Laihonen\affiliationmark{1}
  \and Tuomo Lehtil{\"a}\affiliationmark{1}\thanks{Research supported by the University of Turku Graduate School (UTUGS) and the Vilho, Yrj{\"o} and Kalle V{\"a}is{\"a}l{\"a} Foundation.}
  \and Mar\'{i}a Luz Puertas\affiliationmark{2}\thanks{Author partially supported by the grant MTM2015-63791-R (MINECO-FEDER).}}
\title[Self- and Solid-location-domination]{On Stronger Types of Locating-Dominating Codes\thanks{The paper has been presented in part in the 10th International Colloquium on Graph Theory and Combinatorics (2018, Lyon, France).}}
\affiliation{
  Department of Mathematics and Statistics, University of Turku, Finland\\
  Department of Mathematics, Universidad de Almeria, Spain}
\keywords{Dominating set; locating-dominating set; locating-dominating code; Dilworth number; Sperner's Theorem}
\begin{document}
\publicationdetails{21}{2019}{1}{1}{4771}
\maketitle



\begin{abstract}
Locating-dominating codes  in a graph find their application in sensor networks and have been studied extensively over the years. A locating-dominating code can locate one object in a sensor network, but if there is more than one object, it may lead to false conclusions. In this paper, we consider stronger types of locating-dominating codes which can locate one object and detect if there are multiple objects. We study the properties of these codes and  provide bounds on the
smallest possible size of these codes, for example, with the aid of the Dilworth number and Sperner families. Moreover, these codes are studied in trees and Cartesian products of graphs. We also give the complete realization theorems for the coexistence of the
smallest possible size of these codes and the optimal locating-dominating codes in a graph.
\end{abstract}


\section{Introduction}

Sensor networks are systems designed for environmental monitoring.
Various location detection systems such as fire alarm and
surveillance systems can be viewed as examples of sensor networks.
For location detection, a sensor can be placed in any location of
the network. The sensor monitors its neighbourhood (including the
location of the sensor itself) and reports possible objects or irregularities
such as a fire or an intruder in the neighbouring locations. In the
model considered in the paper, it is assumed that a sensor can
distinguish whether the irregularity is in the location of the
sensor or in the neighbouring locations (as in
~\cite{RS:LDnumber,S:DomLocAcyclic,S:DomandRef}). Based on the
reports of the sensors, a central controller attempts to determine
the location of a possible irregularity in the network. Usually, the
aim is to minimize the number of sensors in the network. More
explanation regarding location detection in sensor networks can be
found in~\cite{Trachtenberg,LT:disj,Ray}. An online bibliography on the topic can be found at \cite{lowww}.

A sensor network can be modelled as a simple and undirected graph $G =(V(G),E(G)) = (V,E)$ as follows: the set of vertices $V$ of the graph represents the locations of the network and the edge set $E$ of the graph represents the connections between the locations. In other words, a sensor can be placed in each vertex of the graph and the sensor placed in the vertex $u$ monitors $u$ itself and the vertices neighbouring $u$. Moreover, besides being simple and undirected we also assume that the graphs in this paper are finite. In what follows, we present some basic terminology and notation regarding graphs. The \emph{open neighbourhood} of $u \in V$ consists of the vertices adjacent to $u$ and it is denoted by $N(u)$. The \emph{closed neighbourhood} of $u$ is defined as $N[u] = \{ u \} \cup N(u)$. The \emph{degree} of a vertex $u$ is the number of vertices in the open neighbourhood $N(u)$ and the \emph{maximum degree} $\Delta(G) = \Delta$ of the graph $G$ is the maximum degree among all the vertices of $G$. The \textit{distance} between two vertices $d(u,v)$ is the number of edges in any shortest path connecting them. A non-empty subset $C$ of $V$ is called a \emph{code} and the elements of the code are called \emph{codewords}. In this paper, the code $C$ (usually) represents the set of locations where the sensors have been placed on. For the set of sensors monitoring a vertex $u \in V$, we use the following notation:
\[
I(u) = N[u] \cap C \text{.}
\]
In order to emphasize the graph $G$ and/or the code $C$, we sometimes write $I(u) = I(C;u) = I(G,C;u)$. We call $I(u)$ the $I$\emph{-set} or the \emph{identifying set} of $u$. 

As stated above, a sensor $u \in V$ reports that an irregularity has been detected if there is (at least) one in the closed neighbourhood $N[u]$. In the model of the paper, we further assume that a sensor $u \in V$ reports $2$ if there is an irregularity in $u$, it reports $1$ if there is one in $N(u)$ (and none in $u$ itself), and otherwise it reports $0$. In other words, a sensor can distinguish whether an irregularity is in the location of the sensor or in the neighbouring locations. We say that a set (or a code) $C$ is \emph{dominating} in $G$ if $I(C;u)$ is non-empty for all $u \in V \setminus C$. In other words, an irregularity in the network can be detected (albeit not located). Furthermore, the smallest cardinality of a dominating set in $G$ is called the \emph{domination number} and it is denoted by $\gamma(G)$. Notice then that if the sensors in the code $C$ are located in such places that $I(C;u)$ is non-empty and unique for all $u \in V \setminus C$, then an irregularity in the network can be located by comparing $I(C;u)$ to $I$-sets of other non-codewords. This leads to the following definition of \emph{locating-dominating codes (or sets)}, which were first introduced by Slater in~\cite{RS:LDnumber,S:DomLocAcyclic,S:DomandRef}.
\begin{definition}
A code $C \subseteq V$ is \emph{locating-dominating} in $G$ if for all distinct $u, v \in V \setminus C$ we have $I(C;u) \neq \emptyset$ and
\[
I(C;u) \neq I(C;v) \text{.}
\]

A locating-dominating code $C$ in a finite graph $G$ with the smallest cardinality is called \emph{optimal} and the number of codewords in an optimal locating-dominating code is denoted by $\LD(G)$. The value $\LD(G)$ is also called the \emph{location-domination number}.
\end{definition}

The previous definition of locating-dominating codes is illustrated in the following example.
\begin{example} \label{ExampleLD}
Let $G$ be the graph illustrated in Figure~\ref{FigureLD}. Consider the code $C = \{b, d, f \}$ in $G$ (see Figure~\ref{FigureLD}). Since the $I$-sets $I(C;a) = \{b, d\}$, $I(C;c) = \{b, f\}$ and $I(C;e) = \{b, d, f\}$ are all non-empty and different, the code $C$ is locating-dominating in $G$. Moreover, there do not exist smaller locating-dominating codes in $G$ as using at most two codewords we can form at most three different non-empty $I$-sets. Therefore, we have $\LD(G) = 3$.
\end{example}

\begin{figure}[ht] 
\centering
 \includegraphics[height=60pt]{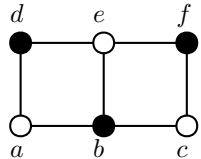}
 \caption{Optimal locating-dominating code in a graph $G$}
 \label{FigureLD}
\end{figure}

The original concept of locating-dominating codes has some issues in certain types of applications. Firstly, locating-dominating codes might output misleading results if there exist more than one irregularity in the graph. For instance, if in the previous example there exist irregularities in $a$ and $c$, then the sensors located at $b$, $d$ and $f$ are reporting $1$. Now the system deduces that the irregularity is in $e$. Hence, a completely false output is given and we do not even notice that something is wrong. Secondly, in order to determine the location of the irregularity, we have to compare the obtained $I$-set to other such sets. In order to overcome these issues, so called self-locating-dominating and solid-locating-dominating codes have been introduced in~\cite{JLLrntcld} motivated by $(1,\leq 1)^+$-identifying or self-identifying codes introduced in \cite{1+ID,SelfIDBor,SelfID}. For more detailed discussion on the motivation of self-locating-dominating and solid-locating-dominating codes, the interested reader is referred to~\cite{JLLrntcld}. 
The formal definitions of these codes are given in the following.
\begin{definition}\label{defSLD}
A code $C \subseteq V$ is \emph{self-locating-dominating} in $G$ if, for all $u \in V \setminus C$, we have $I(C;u)\neq\emptyset$ and
\[
\bigcap_{c \in I(C;u)} N[c] = \{u\} \text{.}
\]
A self-locating-dominating code $C$ in a finite graph $G$ with the smallest cardinality is called \emph{optimal} and the number of codewords in an optimal self-locating-dominating code is denoted by $\SLD(G)$. The value $\SLD(G)$ is also called the \emph{self-location-domination number}.
\end{definition}

\begin{definition}\label{defDLD}
A code $C \subseteq V$ is \emph{solid-locating-dominating} in $G$ if {$I(C;u) \neq \emptyset$ for every $u\in V \setminus C$} and, for all distinct $u, v \in V \setminus C$, we have

\[
I(C;u) \setminus I(C;v) \neq \emptyset  \text{.}
\]
Note that this condition is equivalent with $I(C;u)\not\subseteq I(C;v)$. A solid-locating-dominating code $C$ in a finite graph $G$ with the smallest cardinality is called \emph{optimal} and the number of codewords in an optimal solid-locating-dominating code is denoted by $\DLD(G)$. The value $\DLD(G)$ is also called the \emph{solid-location-domination number}.
\end{definition}

By the previous definitions, it is immediate that any self-locating-dominating and solid-locating-domi\-nating code is also locating-dominating (see also Corollary~\ref{CorollarySLdtoDLD}) and that every graph contains a self-locating-dominating and solid-locating-dominating code. Indeed, $C=V$ is always a self-locating-dominating and solid-locating-dominating code. The definitions are illustrated in the following example. In particular, we show that the given definitions are indeed different.
\begin{example} \label{ExampleSLDDLD}
Let $G$ be a graph illustrated in Figure~\ref{FigureLDSLDDLD}. Let $C$ be a self-locating-dominating code in $G$. Observe first that if $a \notin C$, then $I(C;a) \subseteq \{b, d\}$ and we have
\[
\{a, e\} \subseteq \bigcap_{x \in I(C;a)} N[x] \text{.}
\]
This implies a contradiction and, therefore, the vertex $a$ belongs to $C$. An analogous argument also holds for the vertices $c$, $d$ and $f$. Hence, we have $\{a,c,d,f\} \subseteq C$. Moreover, the code $C_1 = \{a,c,d,f\}$, which is illustrated in Figure~\ref{FigureSLD}, is self-locating-dominating in $G$ since for the non-codewords $b$ and $e$ we have $I(C_1;b) = \{a,c\}$ and $N[a] \cap N[c] = \{b\}$, and $I(C_1;e) = \{d,f\}$ and $N[d] \cap N[f] = \{e\}$. Hence, $C_1$ is an optimal self-locating-dominating code in $G$ and we have $\SLD(G)=4$.

Let us then consider the code $C_2 = \{a,b,c\}$, which is illustrated in Figure~\ref{FigureDLD}. Now we have $I(C_2;d) = \{a \}$, $I(C_2;e) = \{b \}$ and $I(C_2;f) = \{c \}$. Therefore, it is easy to see that $C_2$ is a solid-locating-dominating code in $G$. Moreover, there are no solid-locating-dominating codes in $G$ with smaller number of codewords since even a regular locating-dominating code has always at least $3$ codewords by Example~\ref{ExampleLD}. Thus, $C_2$ is an optimal solid-locating-dominating code in $G$ and we have $\DLD(G)=3$.
\end{example}



\begin{figure}%
    \centering
    \begin{subfigure}{4cm}
    \includegraphics[width=\linewidth]{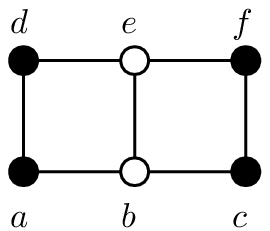}
    \caption{Self-locating-dominating \newline \hspace*{0.42cm} code}\label{FigureSLD}
    \end{subfigure}
    \hspace{1cm}
    \begin{subfigure}{4.18cm}
    \includegraphics[width=4cm]{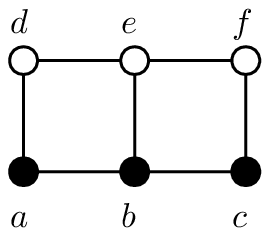}
    \caption{Solid-locating-dominating \newline \hspace*{0.42cm} code}\label{FigureDLD}
    \end{subfigure}
    \caption{Optimal self-locating-dominating and solid-locating-dominating codes in a graph $G$}%
    \label{FigureLDSLDDLD}%
\end{figure}

In the previous example, we showed that the definitions of self-locating-dominating and solid-locating-dominating codes are different. Furthermore, by comparing Examples~\ref{ExampleLD} and \ref{ExampleSLDDLD}, we notice that the new codes are also different from the original locating-dominating codes. In the following theorem, we present new characterizations for self-locating-dominating and solid-locating-dominating codes. Comparing these characterizations to the original definitions of the codes, the differences of the codes become apparent. {We omit the proof of the following theorem, because it is proved in}~\cite{JLLrntcld} {for connected graphs and it is easily modified for non-connected ones.}

\newpage
\begin{theorem} \label{ThmCharacterizationSLDDLD}
Let $G$ be a graph on at least two vertices.
\begin{itemize}
\item[(i)] A code $C \subseteq V$ is self-locating-dominating if and only if, for all distinct $u \in V \setminus C$ and $v \in V$, we have
   $    I(C;u) \setminus I(C;v) \neq \emptyset \text{.}$
\item[(ii)] A code $C \subseteq V$ is solid-locating-dominating if and only if, for all $u \in V \setminus C$, we have {$I(C;u)\neq \emptyset$} and
    \[
    \left( \bigcap_{c \in I(C;u)} N[c] \right) \setminus C = \{u\} \text{.}
    \]
\end{itemize}
\end{theorem}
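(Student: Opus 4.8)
The plan is to reduce both equivalences to a single elementary observation that rewrites the intersection $\bigcap_{c\in I(C;u)}N[c]$ purely in terms of $I$-sets. First I would record the symmetry of adjacency, namely $x\in N[y]\iff y\in N[x]$, and from it derive the \emph{pivotal equivalence}: for every $u\in V\setminus C$ and every $w\in V$,
\[
w\in\bigcap_{c\in I(C;u)}N[c]\iff I(C;u)\subseteq I(C;w).
\]
Indeed, $w$ lies in the intersection exactly when $w\in N[c]$, equivalently $c\in N[w]$, for every $c\in I(C;u)$; since each such $c$ is a codeword, $c\in N[w]$ is the same as $c\in I(C;w)$, and demanding this for all $c\in I(C;u)$ is precisely the inclusion $I(C;u)\subseteq I(C;w)$. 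Two immediate consequences are that $u$ itself always lies in the intersection (take $w=u$), and that
\[
\Bigl(\bigcap_{c\in I(C;u)}N[c]\Bigr)\setminus C=\{w\in V\setminus C:\ I(C;u)\subseteq I(C;w)\}.
\]

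For part (i), I would fix $u\in V\setminus C$ with $I(C;u)\neq\emptyset$ and observe, via the pivotal equivalence and the fact that $u$ is always in the intersection, that the defining condition $\bigcap_{c\in I(C;u)}N[c]=\{u\}$ collapses the intersection to $\{u\}$ exactly when no vertex $w\neq u$ satisfies $I(C;u)\subseteq I(C;w)$, i.e.\ when $I(C;u)\setminus I(C;w)\neq\emptyset$ for all $w\in V$, $w\neq u$. Quantifying over all $u\in V\setminus C$ yields the stated characterization. The nonemptiness clause is handled by noting that, since $G$ has at least two vertices, applying the characterizing condition to any $v\neq u$ forces $I(C;u)\neq\emptyset$, because an empty $I(C;u)$ would make $I(C;u)\setminus I(C;v)$ empty for every $v$.

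For part (ii), I would use the second consequence above. Fixing $u\in V\setminus C$ with $I(C;u)\neq\emptyset$, the condition $\bigl(\bigcap_{c\in I(C;u)}N[c]\bigr)\setminus C=\{u\}$ says that $u$ is the only \emph{non-codeword} $w$ with $I(C;u)\subseteq I(C;w)$, that is, $I(C;u)\not\subseteq I(C;v)$ for every $v\in V\setminus C$ with $v\neq u$. This is exactly the solid-locating-dominating requirement $I(C;u)\setminus I(C;v)\neq\emptyset$ for distinct non-codewords $u,v$, so the two formulations coincide once we carry along the common clause that $I(C;u)\neq\emptyset$ for all $u\in V\setminus C$.

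The only genuinely delicate point is the pivotal equivalence, which rests on two small facts: the symmetry $w\in N[c]\iff c\in N[w]$, and the observation that every element of $I(C;u)$ is a codeword, so that $c\in N[w]$ may be read as membership in $I(C;w)$. Everything else is a matter of unwinding the definitions while tracking whether a given vertex is permitted to be a codeword. A minor subtlety worth flagging is the empty-intersection convention: were $I(C;u)$ empty, $\bigcap_{c\in I(C;u)}N[c]$ would be all of $V$, so the hypothesis $|V|\geq 2$ is exactly what rules this out and lets us fold nonemptiness cleanly into each characterization.
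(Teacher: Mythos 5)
Your proposal is correct. Note, however, that the paper itself does not prove this theorem at all: it explicitly omits the proof, deferring to the reference \cite{JLLrntcld} (where it is proved for connected graphs) and remarking that the argument extends to non-connected graphs. So there is no in-paper proof to compare against; your argument is a valid, self-contained substitute. Its organizing idea --- the pivotal equivalence $w\in\bigcap_{c\in I(C;u)}N[c]\iff I(C;u)\subseteq I(C;w)$, which translates the neighbourhood-intersection conditions of Definitions~\ref{defSLD} and~\ref{defDLD} into $I$-set inclusions in one stroke --- cleanly yields both directions of both parts: part (i) by letting $w$ range over all of $V$, part (ii) by restricting $w$ to $V\setminus C$, which is exactly the codeword/non-codeword asymmetry the paper highlights after the theorem statement. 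You also correctly identify the two points where care is needed: the hypothesis $|V|\ge 2$ is used (only) to recover $I(C;u)\neq\emptyset$ from the characterization in part (i), and the empty-family convention $\bigcap_{c\in\emptyset}N[c]=V$ is what would otherwise break that recovery. The one negligible overstatement is the claim that $|V|\ge 2$ is needed to ``fold nonemptiness into each characterization'': in part (ii) nonemptiness appears explicitly on both sides, so the hypothesis plays no role there.
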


By comparing Definition \ref{defDLD} and Theorem \ref{ThmCharacterizationSLDDLD} (i) we notice that the only difference is that in $I(C;u)\setminus I(C;v)$ vertex $v$ can be a codeword when we consider self-location-domination. Similarly, when we compare Definition \ref{defSLD} and Theorem \ref{ThmCharacterizationSLDDLD} (ii) we notice that the only difference is that in the case of solid-location-domination we omit codewords from the intersection.

The previous theorem together with the definition of solid-locating-dominating codes and the previous observation immediately gives the following corollary.
\begin{corollary} \label{CorollarySLdtoDLD}
The following facts hold for all graphs $G$.
\begin{itemize}
\item If $C$ is a self-locating-dominating code in $G$, then $C$ is also solid-locating-dominating in $G$.
\item If $C$ is a solid-locating-dominating code in $G$, then $C$ is also locating-dominating in $G$.
\end{itemize}
Thus, we have $\LD(G)\leq \DLD(G) \leq \SLD(G)$.
\end{corollary}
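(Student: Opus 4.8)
The plan is to establish the two set-inclusion implications first and then read off the numerical chain from the optimality of the codes. For the first bullet I would invoke the characterization in Theorem~\ref{ThmCharacterizationSLDDLD}(i) rather than work from Definition~\ref{defSLD} directly. If $C$ is self-locating-dominating, that characterization guarantees $I(C;u)\setminus I(C;v)\neq\emptyset$ for every non-codeword $u\in V\setminus C$ and every vertex $v\neq u$ of $G$, i.e.\ with $v$ ranging over all of $V$. Restricting $v$ to the smaller range $V\setminus C$ yields exactly the inclusion condition required by Definition~\ref{defDLD}, while the non-emptiness $I(C;u)\neq\emptyset$ is already part of Definition~\ref{defSLD}. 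Hence $C$ is solid-locating-dominating.

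For the second bullet I would argue straight from the definitions. Suppose $C$ is solid-locating-dominating. Definition~\ref{defDLD} gives $I(C;u)\neq\emptyset$ for all $u\in V\setminus C$, which is the domination half of the locating-dominating condition. For the separation half, take distinct $u,v\in V\setminus C$; the condition $I(C;u)\setminus I(C;v)\neq\emptyset$, equivalently $I(C;u)\not\subseteq I(C;v)$, forbids $I(C;u)=I(C;v)$ in particular, so the two $I$-sets are distinct and $C$ is locating-dominating.

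Finally I would derive the inequalities from these implications together with optimality. Since $C=V$ is always self-locating-dominating, all three code families are non-empty, so each parameter is well defined. Choosing an optimal self-locating-dominating code of size $\SLD(G)$, the first bullet makes it a solid-locating-dominating code, whence $\DLD(G)\le\SLD(G)$; choosing an optimal solid-locating-dominating code of size $\DLD(G)$, the second bullet makes it locating-dominating, whence $\LD(G)\le\DLD(G)$. Concatenating gives $\LD(G)\le\DLD(G)\le\SLD(G)$. The argument is essentially bookkeeping; the only point that needs care is keeping the quantifier ranges and the direction of the optimality inequalities straight---in particular that passing to a \emph{larger} class of admissible codes can only \emph{lower} the optimal size, which is why the self-/solid-/locating inclusion chain reverses into the stated numerical chain.
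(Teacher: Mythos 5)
Your proof is correct and follows essentially the same route as the paper: the first bullet by restricting the quantifier range $v\in V$ in Theorem~\ref{ThmCharacterizationSLDDLD}(i) to $v\in V\setminus C$, the second by noting that $I(C;u)\setminus I(C;v)\neq\emptyset$ forbids equality of $I$-sets, and the numerical chain by applying these implications to optimal codes. The paper treats all of this as immediate from the characterization theorem and the definitions, which is exactly the bookkeeping you spelled out.
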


As stated earlier, self-locating-dominating and solid-locating-dominating codes have benefits over regular locating-dominating codes; they detect more than one irregularity and locate one irregularity without comparison to other $I$-sets --- for more details, see \cite{JLLrntcld}.

Previously, when self-locating-dominating and solid-locating-dominating codes have been studied, in \cite{JLLrntcld}, the optimal values for $\SLD(K_n\square K_m)$ and $\gamma^{DLD}(K_n\square K_m)$ have been found. Also the a general lower bound for $\SLD(\F_2^n)$ has been given and an infinite family of constructions attaining this bound is presented for suitable values of $n$. Moreover, a general lower bound for $\gamma^{DLD}(\F_2^n)$ is given and this bound is shown to be asymptotically tight as $n$ grows.

In what follows, the structure of the paper is briefly discussed. In Section~\ref{SectionBasics}, we first show some general bounds and properties for self- and solid-locating-dominating codes; in particular, we utilize the Dilworth number and Sperner families. Then, in Section~\ref{SectionTrees}, we consider the codes in trees and determine self-location-domination and solid-location-domination numbers with the help of other graph parameters. In Section~\ref{SectionCartesian}, we consider Cartesian products and give some general bounds for them which are shown to be achieved in the case of ladders and some rook's graphs.
Finally, in Section~\ref{SectionRealization}, we study the existence of graphs when we are given the location-domination number and the self-location-domination or the solid-location-domination number associated with them.

\section{Basics} \label{SectionBasics}

In this section, we present some basic results regarding self-locating-dominating and solid-locating-dom\-inating codes. In particular, we give various lower and upper bounds for such codes. We first begin by giving results which do not take advantage of any properties or parameters of the graph such as the maximum degree or the independence number.  Then, in Section \ref{secSper}, we use the Sperner's Theorem to gain new bounds. Later, in Section \ref{secDil}, we apply the Dilworth number. Finally, in Section \ref{secIndandComp}, we use independence number and consider complements of graphs.

In the following theorem, we begin by giving a simple upper bound for solid-locating-dominating codes in graphs. {It is clear that the discrete graph $D_n$, with $n$ vertices and no edges, satisfies $\DLD(D_n)=n$, because $V(D_n)$ is its unique dominating set. We now focus on graphs with at least one edge.}

\begin{theorem} \label{ThmDLDn-1}
If $G =(V,E)$ is a graph with order $n$ and size $m\geq 1$,
then the code $V \setminus \{u\}$ is solid-locating-dominating in $G$ for any non-isolated vertex $u \in V$. Thus, we have $$\DLD(G) \leq n-1.$$
\end{theorem}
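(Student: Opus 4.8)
The plan is to exploit the fact that the code $C = V \setminus \{u\}$ has exactly one non-codeword, namely $u$ itself. With only a single vertex outside $C$, the two requirements in Definition~\ref{defDLD} simplify dramatically, and essentially all the work reduces to checking the non-emptiness of one $I$-set.

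First I would confirm that a non-isolated vertex actually exists, so that the statement is not vacuous: since the size satisfies $m \geq 1$, the graph has at least one edge, and at least one endpoint of that edge is non-isolated. Fixing any such non-isolated vertex $u$, I set $C = V \setminus \{u\}$, so that $V \setminus C = \{u\}$ and $u$ is the only vertex whose conditions need to be verified.

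Next I would verify the domination condition $I(C;u) \neq \emptyset$. Because $u$ is non-isolated, it has a neighbour $w \in N(u)$; since $w \neq u$ we have $w \in C$, and since $w \in N(u) \subseteq N[u]$ we get $w \in N[u] \cap C = I(C;u)$. Thus $I(C;u) \neq \emptyset$, which is precisely the domination requirement for the unique non-codeword. The separation condition, that $I(C;u) \setminus I(C;v) \neq \emptyset$ for all \emph{distinct} $u, v \in V \setminus C$, is then satisfied vacuously, because $V \setminus C$ contains only one vertex and so no distinct pairs exist.

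Hence $C$ is solid-locating-dominating, and since $|C| = n - 1$ we conclude $\DLD(G) \leq n - 1$. I do not expect any genuine obstacle here; the only point worth emphasizing in the write-up is that the hypothesis $m \geq 1$ is exactly what supplies the non-isolated vertex keeping the single $I$-set non-empty --- without an edge the lone non-codeword would be undominated and the bound would fail, consistent with the discrete-graph remark $\DLD(D_n) = n$ stated just before the theorem.
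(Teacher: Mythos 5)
Your proof is correct and follows exactly the paper's approach: the paper also takes $C = V \setminus \{u\}$ for a non-isolated $u$ and declares the verification ``immediate from the definition,'' which is precisely the two-step check you spell out (domination of the lone non-codeword via a neighbour in $C$, and vacuous separation since $|V \setminus C| = 1$). Your write-up simply makes explicit what the paper leaves implicit.
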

\begin{proof}
{Let $u\in V$ be a non-isolated vertex of $G$, i.e., $N(u)\cap (V\setminus \{ u\})\neq \emptyset$. By the definition, it is immediate that $V \setminus \{u\}$ is a solid-locating-dominating code of $G$. This further implies that $\DLD(G) \leq |V|-1$.}
\end{proof}

The result of the previous theorem can also be interpreted as follows: {in the particular case of graphs with no isolated vertices,} none of the vertices of a graph is forced to be in all the solid-locating-dominating codes of the graph and hence, the same is also true for locating-dominating codes by Corollary \ref{CorollarySLdtoDLD}. However, this is not the case with self-locating-dominating codes. Hence, for future considerations, we define the concept of forced codewords as follows: a vertex $u$ of $G$ is said to be a \emph{forced codeword} regarding self-location-domination if $u$ belongs to all self-locating-dominating codes in $G$. In the following theorem, we give a simple characterization for forced codewords and show that such vertices indeed exist.
\begin{theorem} \label{ThmForcedCodewordsChar}
Let $G = (V,E)$ be a graph. If $|V| = 1$, then the single vertex of the graph $G$ is a forced codeword. Assuming $|V| \geq 2$, a vertex $u \in V$ is a forced codeword regarding self-location-domination if and only if for some vertex $v \in V$ other than $u$ we have $N(u) \subseteq N[v]$.
\end{theorem}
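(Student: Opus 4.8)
The plan is to prove the biconditional (for $|V| \geq 2$) by establishing the contrapositive on each side: $u$ fails to be a forced codeword precisely when there exists a self-locating-dominating code avoiding $u$. The single-vertex case I would dispose of first, since a code must be non-empty, so the only code in a one-vertex graph is $\{u\}$ itself and $u$ trivially lies in every code. For $|V| \geq 2$, the whole argument will pivot on one elementary observation about closed neighbourhoods: for any $S \subseteq V$ and any vertex $v$, we have $v \in \bigcap_{c \in S} N[c]$ if and only if $S \subseteq N[v]$, because $v \in N[c] \Leftrightarrow c \in N[v]$. This converts the intersection condition in Definition~\ref{defSLD} into a neighbourhood-containment condition, which is exactly the shape of the statement to be proved.

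For the direction showing the condition forces $u$ into every code, I would suppose some $v \neq u$ satisfies $N(u) \subseteq N[v]$ and assume toward a contradiction that $C$ is self-locating-dominating with $u \notin C$. Then $I(C;u) = N[u] \cap C = N(u) \cap C \subseteq N(u) \subseteq N[v]$, so by the observation $v \in \bigcap_{c \in I(C;u)} N[c]$. But Definition~\ref{defSLD} forces this intersection to equal $\{u\}$, and $v \neq u$, a contradiction. Hence every self-locating-dominating code contains $u$.

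For the converse, I would assume that for every $v \neq u$ we have $N(u) \not\subseteq N[v]$ and exhibit a self-locating-dominating code omitting $u$, namely $C = V \setminus \{u\}$. Its only non-codeword is $u$, so only the condition at $u$ must be checked. First $N(u) \neq \emptyset$, since otherwise $N(u) = \emptyset \subseteq N[v]$ for any $v \neq u$ (such $v$ exists as $|V| \geq 2$), contradicting the hypothesis; thus $I(C;u) = N(u) \neq \emptyset$. It then remains to verify $\bigcap_{c \in N(u)} N[c] = \{u\}$: the vertex $u$ lies in this intersection since each $c \in N(u)$ is adjacent to $u$, while any other $w$ in the intersection would give $N(u) \subseteq N[w]$ by the observation, contradicting the hypothesis with $v = w$. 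Hence the intersection is exactly $\{u\}$, so $C$ is self-locating-dominating and $u$ is not forced.

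The containment computations are routine substitutions; the only point requiring a little care is the symmetry observation linking membership in $\bigcap_c N[c]$ to containment in a single closed neighbourhood, as it is what makes both directions collapse to the same criterion. Beyond that, I expect the main thing to double-check is that the degenerate possibilities (an isolated $u$, and the $|V|=1$ graph) fall out correctly from the hypothesis, which they do as indicated above.
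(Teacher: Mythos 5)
Your proof is correct and follows essentially the same route as the paper: the forward direction shows that if $N(u) \subseteq N[v]$ and $u \notin C$, then $v$ lies in $\bigcap_{c \in I(C;u)} N[c]$, contradicting Definition~\ref{defSLD}, and the converse exhibits the code $C = V \setminus \{u\}$. Your explicit symmetry observation ($v \in \bigcap_{c \in S} N[c]$ iff $S \subseteq N[v]$) and your handling of the $N(u) = \emptyset$ degeneracy inside the hypotheses are just cleaner packagings of steps the paper performs by case analysis.
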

\begin{proof}
{Let $C$ be a self-locating-dominating code in $G$ and $u$ be a
vertex of $V$. If $|V|=1$, then due to the domination the single vertex is
a forced codeword. Assume now that $|V| \geq 2$. Suppose further that there exists another vertex $v \in V$ such that $u \neq v$ and $N(u) \subseteq N[v]$. If
$N(u)=\emptyset$, then again the domination yields that $u$ is a
forced codeword. Suppose that $N(u)\neq \emptyset.$  This implies
that if $u \notin C$, then}
\[
\{u, v\} \subseteq \bigcap_{c \in N(u)} N[c]\subseteq \bigcap_{c\in I(C;u)}N[c]. 
\]
{Therefore, as the previous intersection does not consist of a
single vertex, the vertex $u$ belongs to $C$ and is a forced codeword}.

Suppose then to the contrary that for any vertex $v \in V$ other than $u$ we have $N(u) \nsubseteq N[v]$, i.e., $N(u) \setminus N[v] \neq \emptyset$. Now choosing $C = V \setminus \{u\} (\neq \emptyset)$, we have $I(C;u) = N(u)$ and
\[
\bigcap_{c \in I(C;u)} N[c] = \{u\} \text{.}
\]
Therefore, by the definition, $C$ is a self-locating-dominating code in $G$. Thus, $u$ is not a forced codeword and we have a contradiction with the supposition. This concludes the proof of the theorem.
\end{proof}

By the previous theorem, we immediately observe that there exist graphs such that all the vertices are forced codewords. For example, the complete graphs and the complete bipartite graphs, where both independent sets of the partition have at least two vertices, are such extreme graphs.

\subsection{Results based on Sperner's Theorem}\label{secSper}
One of the fundamental results on locating-dominating codes by Slater~\cite{S:DomLocAcyclic} says that if $G$ is a graph with $n$ vertices and $\LD(G) = k$, then $n \leq k + 2^k - 1$. This result is based on the simple fact that using $k$ codewords at most $2^k-1$ distinct, non-empty $I$-sets can be formed. In what follows, we present an analogous result for self-locating-dominating and solid-locating-dominating codes. However, here it is not enough that all the $I$-sets are non-empty and unique, but we further require that none of the $I$-sets is included in another one. For this purpose, we present  Sperner's theorem, which considers the maximum number of subsets of a finite set such that none of the subsets is included in another subset. 
Sperner's theorem has originally been presented in~\cite{Ssuem}, and for more recent developments regarding the Sperner theory, we refer to~\cite{Kst}.
\begin{theorem}[Sperner's theorem~\cite{Ssuem}] \label{ThmSperner}
Let $N$ be a set of $k$ elements and let $\mathcal{F}$ be a family of subsets of $N$ such that no member of $\mathcal{F}$ is included in another member of $\mathcal{F}$, i.e., for all distinct $X,Y \in \mathcal{F}$ we have $X \setminus Y \neq \emptyset$. Then we have
\[
|\mathcal{F}| \leq \binom{k}{\left\lfloor \frac{k}{2} \right\rfloor} \text{.}
\]
Moreover, the equality holds if and only if $\mathcal{F} = \{ X \subseteq N \mid |X| = k/2\}$ when $n$ is even, and $\mathcal{F} = \{ X \subseteq N \mid |X| = (k-1)/2\}$ or $\mathcal{F} = \{ X \subseteq N \mid |X| = (k+1)/2\}$ when $n$ is odd.
\end{theorem}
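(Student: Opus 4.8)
The plan is to prove the inequality through the Lubell--Yamamoto--Meshalkin (LYM) inequality, obtained by Lubell's chain-counting argument, and then to extract the equality conditions from the two places where the estimate can be loose. First I would set up the combinatorial framework: call a \emph{maximal chain} a sequence $\emptyset = C_0 \subsetneq C_1 \subsetneq \cdots \subsetneq C_k = N$ with $|C_i| = i$, and observe that these are in bijection with the orderings of the $k$ elements of $N$, so there are exactly $k!$ of them. For a fixed $X \subseteq N$ with $|X| = j$, the maximal chains passing through $X$ are counted by first ordering the $j$ elements of $X$ and then the remaining $k-j$ elements, giving $j!\,(k-j)!$ chains through $X$.

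The key step is the counting argument. Since $\mathcal{F}$ is an antichain, no maximal chain can contain two distinct members of $\mathcal{F}$: any two sets lying on a common chain are comparable, contradicting the defining property $X \setminus Y \neq \emptyset$. Hence the sets of chains associated with distinct members of $\mathcal{F}$ are disjoint, and summing their sizes yields
\[
\sum_{X \in \mathcal{F}} |X|!\,(k-|X|)! \leq k!.
\]
Dividing by $k!$ gives the LYM inequality $\sum_{X \in \mathcal{F}} \binom{k}{|X|}^{-1} \leq 1$. Using the standard fact that $\binom{k}{j} \leq \binom{k}{\lfloor k/2 \rfloor}$ for every $0 \leq j \leq k$, each summand is at least $\binom{k}{\lfloor k/2 \rfloor}^{-1}$, so $|\mathcal{F}|\cdot\binom{k}{\lfloor k/2 \rfloor}^{-1} \leq 1$, which is the claimed bound.

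For the equality characterization, I would note that equality in the final bound forces both estimates to be tight at once: $\binom{k}{|X|} = \binom{k}{\lfloor k/2\rfloor}$ for every $X \in \mathcal{F}$, and the LYM inequality itself is an equality, meaning every maximal chain passes through exactly one member of $\mathcal{F}$. When $k$ is even, the size condition alone forces $|X| = k/2$ for all members, and since there are exactly $\binom{k}{k/2}$ sets of that size, $\mathcal{F}$ is the whole middle layer $\{X : |X| = k/2\}$. When $k$ is odd the size condition only restricts members to the two middle layers, of sizes $(k-1)/2$ and $(k+1)/2$, and the main obstacle is to show that $\mathcal{F}$ cannot genuinely use both. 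Here I would translate the chain condition into a statement about the bipartite Hasse graph $H$ whose vertices are the sets of these two sizes and whose edges join $A \subset B$ with $|A| = (k-1)/2$ and $|B| = (k+1)/2$: since each maximal chain meets exactly one member of $\mathcal{F}$ and its restriction to the two middle layers is precisely one such edge, every edge of $H$ has exactly one endpoint in $\mathcal{F}$. The final step, which I expect to be the delicate one, is to prove that $H$ is connected; once that is known, the only vertex sets meeting every edge in exactly one endpoint are the two colour classes, so $\mathcal{F}$ equals one of the two middle layers, completing the characterization.
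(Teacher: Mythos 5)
The paper never proves this statement: it is quoted as Sperner's classical theorem with a citation to Sperner's 1928 article, so there is no internal proof to compare against. Judged on its own, your argument is the standard Lubell chain-counting proof of the LYM inequality, and the inequality half is complete and correct: maximal chains biject with orderings of $N$, a $j$-set lies on $j!\,(k-j)!$ of them, an antichain meets each chain at most once, and $\sum_{X\in\mathcal{F}}\binom{k}{|X|}^{-1}\le 1$ together with unimodality of the binomial coefficients gives the bound. Your reduction of the equality case is also sound: equality forces $\binom{k}{|X|}=\binom{k}{\lfloor k/2\rfloor}$ for every member and forces every maximal chain to meet $\mathcal{F}$ exactly once; for even $k$ this finishes immediately, and for odd $k$ it correctly yields that every edge of the bipartite graph $H$ between the two middle layers has exactly one endpoint in $\mathcal{F}$, i.e., that the indicator of $\mathcal{F}$ is a proper $2$-colouring of $H$.

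The one genuine gap is the step you explicitly deferred: the connectedness of $H$. It cannot be waved away, since the final implication fails for disconnected bipartite graphs (swap the colours on a single component to get a proper $2$-colouring that is neither side), so your characterization really does rest on it; fortunately it is easy rather than delicate. Given distinct $A, A'$ in the lower layer, pick $y\in A'\setminus A$ and $x\in A\setminus A'$ (both exist because $|A|=|A'|$); then $A,\ A\cup\{y\},\ \bigl(A\cup\{y\}\bigr)\setminus\{x\}$ is a path in $H$ ending at a lower-layer set whose symmetric difference with $A'$ is strictly smaller, so induction on $|A\,\triangle\, A'|$ connects any two lower-layer vertices, and every upper-layer set is adjacent to some lower-layer set since it has a subset of size $(k-1)/2$. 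Inserting this two-line lemma makes your proof complete. (Minor point: the paper's statement writes ``when $n$ is even/odd'' where it means $k$; you read it correctly.)
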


A family of subsets satisfying the conditions of the previous theorem is called a \emph{Sperner family}. In the following theorem, we apply Sperner's theorem to obtain an upper bound on the order of a graph based on the number of codewords in a solid-locating-dominating (or self-locating-dominating) code.
\begin{theorem} \label{ThmUpperBoundOnOrder}
Let $G$ be a graph with $n$ vertices and $C$ be a solid-locating-dominating code in $G$ with $k$ codewords. Then we have the following upper bound on the order of $G$:
\[
n \leq k + \binom{k}{\left\lfloor \frac{k}{2} \right\rfloor} \text{.}
\]
\end{theorem}
\begin{proof}
Let $C$ be a solid-locating-dominating code in $G$ with $k$ codewords. By the definition, for any distinct $u, v \in V \setminus C$, we have $I(C;u) \setminus I(C;v) \neq \emptyset$. Therefore, the $I$-sets of non-codewords of $G$ form a Sperner family of subsets of $C$. Thus, by Sperner's theorem, we obtain that
\[
|V \setminus C| = n - k \leq \binom{k}{\left\lfloor \frac{k}{2} \right\rfloor} \text{.}
\]
Hence, the claim immediately follows.
\end{proof}

Observe that the previous theorem also holds for self-locating-dominating codes due to Corollary~\ref{CorollarySLdtoDLD}. Furthermore, the upper bound of the theorem can be attained even for self-locating-dominating codes as is shown in the following example.
\begin{example} \label{ExampleMaximalSperner}
Let $k$ be a positive integer and $\ell$ be an integer such that $\ell = \binom{k}{\left\lfloor k/2 \right\rfloor}$. Consider then a bipartite graph $G$ with the vertex set $U \cup V$, where $U = \{ u_1, u_2, \ldots, u_k \}$ and $V = \{ v_1, v_2, \ldots, v_{\ell} \}$. There are no edges within the sets $U$ and $V$, and the edges between the two sets are defined as follows. Let $\mathcal{F}$ be a (maximum) Sperner family of $U$ attaining the upper bound of Theorem~\ref{ThmSperner} with each subset of $\mathcal{F}$ having $\lfloor k/2 \rfloor$ elements. Recall that the number of subsets in $\mathcal{F}$ is $\ell$. Denoting the subsets of $\mathcal{F}$ by $F_1, F_2, \ldots, F_{\ell}$, we define the edges of each $v_i$ as follows: $v_i$ is adjacent to the vertices of $F_i$.

Now the code $C = U$ is self-locating-dominating in $G$. Indeed, the $I$-sets of the non-codewords in $V$ form a Sperner family and, hence, 
the characterization~(i) of Theorem~\ref{ThmCharacterizationSLDDLD} is satisfied. Thus, $C$ is a self-locating-dominating code in $G$ with $k$ codewords and $G$ is a graph with $k + \binom{k}{\left\lfloor k/2 \right\rfloor}$ vertices.
\end{example}

In the following immediate corollary of Theorem~\ref{ThmUpperBoundOnOrder}, we give a lower bound on the minimum size of solid-locating-dominating and self-locating-dominating codes based on the order of a graph. Notice also that the obtained lower bounds can be attained by the construction given in the previous example.
\begin{corollary}
Let $G$ be a graph with $n$ vertices and let $k$ be the smallest integer such that $n \leq k + \binom{k}{\left\lfloor k/2 \right\rfloor}$. Then we have
\[
\SLD(G) \geq \DLD(G) \geq k \text{.}
\]
\end{corollary}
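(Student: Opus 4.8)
The plan is to derive this corollary directly from Theorem~\ref{ThmUpperBoundOnOrder} by a short contrapositive argument, so essentially no new combinatorial work is required. First I would recall what Theorem~\ref{ThmUpperBoundOnOrder} gives us: if $G$ has $n$ vertices and admits a solid-locating-dominating code with $j$ codewords, then $n \leq j + \binom{j}{\lfloor j/2 \rfloor}$. The goal is to show that any such code must satisfy $j \geq k$, where $k$ is the smallest integer with $n \leq k + \binom{k}{\lfloor k/2 \rfloor}$.

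The key step is to observe that the function $f(j) = j + \binom{j}{\lfloor j/2 \rfloor}$ is strictly increasing in $j$ for nonnegative integers $j$, since both $j$ and the central binomial coefficient $\binom{j}{\lfloor j/2 \rfloor}$ are nondecreasing and the first term strictly increases. Given this monotonicity, I would argue by contradiction: suppose $G$ had a solid-locating-dominating code with $j < k$ codewords. Then by Theorem~\ref{ThmUpperBoundOnOrder} we would have $n \leq j + \binom{j}{\lfloor j/2 \rfloor} \leq (k-1) + \binom{k-1}{\lfloor (k-1)/2 \rfloor}$, contradicting the minimality of $k$ as the smallest integer satisfying $n \leq k + \binom{k}{\lfloor k/2 \rfloor}$. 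Hence $\DLD(G) \geq k$. The inequality $\SLD(G) \geq \DLD(G)$ then follows immediately from Corollary~\ref{CorollarySLdtoDLD}, which gives $\LD(G) \leq \DLD(G) \leq \SLD(G)$ for every graph.

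I do not expect any genuine obstacle here, since the statement is explicitly flagged as an immediate corollary of Theorem~\ref{ThmUpperBoundOnOrder}. The only point that warrants a moment of care is confirming the monotonicity of $f$, so that the minimality of $k$ can be leveraged correctly; in particular one should be comfortable that ``smallest integer with $n \leq f(k)$'' forces every smaller $j$ to violate $n \leq f(j)$. With $f$ strictly increasing this is automatic. Everything else is a direct appeal to the two earlier results, and the chain $\SLD(G) \geq \DLD(G) \geq k$ assembles the two bounds into the displayed conclusion.
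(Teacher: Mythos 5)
Your proposal is correct and is essentially the paper's own argument: the paper states this as an immediate consequence of Theorem~\ref{ThmUpperBoundOnOrder} together with Corollary~\ref{CorollarySLdtoDLD}, which is exactly what you spell out. One small remark: the monotonicity of $f(j)=j+\binom{j}{\lfloor j/2\rfloor}$ is not actually needed, since if $\DLD(G)=j<k$ then $n\le f(j)$ already contradicts the minimality of $k$ directly, without comparing $f(j)$ to $f(k-1)$.
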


\medskip

\subsection{Results using the Dilworth Number}\label{secDil}

In what follows, we are going to present some results on self-location-domination and solid-location-domination based on certain properties or parameters of graphs. For this purpose, we first present some definitions and notation. Let $u$ and $v$ be distinct vertices of $G$.
We say that $u$ and $v$ are \emph{false twins} if $N(u) = N(v)$ and that $u$ and $v$ are \emph{true twins} if $N[u] = N[v]$. Furthermore, we say that $u$ and $v$ are \emph{twins} if they are false or true twins. Then a graph is called \emph{twin-free} if there does not exist a pair of twin vertices.

The characterization of forced codewords regarding self-location-domination in Theorem~\ref{ThmForcedCodewordsChar} motivates us to recall the following definition from~\cite{FoHa1978}. For a graph $G$, the \emph{vicinal preorder} $\lesssim$  is defined on $V(G)$ as follows:
\[
x\lesssim y \text{ if and only if } N(x)\subseteq N[y].
\]
In other words, a vertex $x$ is a forced codeword if and only if there exists a vertex $y$ such that $x \lesssim y$ by Theorem~\ref{ThmForcedCodewordsChar}. It is easy to see that $\lesssim$ is in fact a preorder, that is, a reflexive and transitive relation. We use the following notation:
\begin{itemize}
\item $x \sim y$ for ($x \lesssim y$  and $y \lesssim x$),
\item $x < y$ for ($x \lesssim y$  and not $y \lesssim x$),
\end{itemize}
A \emph{chain} is a subset $B\subseteq V(G)$ such that for any two elements $x$ and $y$ of $B$, $x \lesssim y$ or $y\lesssim x$ must hold. An \emph{antichain} is a subset $A \subseteq V(G)$ such that for any $x,y\in  A$, $x\lesssim y$ implies $x = y$. A vertex $x$ is \emph{maximal} if there is no vertex $y$ satisfying $x<y$. The existence of at least a single maximal vertex in the vicinal preorder is guaranteed in every \emph{finite} graph.

\begin{lemma}
Let $G$ be a graph of order {$n \ge 2$}. Then the following
statements hold.
\begin{enumerate}
\item If $x,y\in V(G)$ are neighbours, then $x \sim y$ if and only if $x$ and $y$ are true twins. On the other hand if $x$ and $y$ are not neighbours, then $x \sim y$ if and only $x$ and $y$ are false twins.
\item  A vertex $x\in V(G)$ is a forced codeword if and only if there exists $y\neq x$ such that $x\lesssim y$.
\end{enumerate}
\end{lemma}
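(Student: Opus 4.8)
The lemma has two parts, and both are essentially unwindings of the definitions of the vicinal preorder $\lesssim$ together with the definitions of true/false twins and (for part 2) the characterization of forced codewords already established in Theorem~\ref{ThmForcedCodewordsChar}. I would treat part 2 first since it is almost immediate, and spend most of the effort on the case analysis in part 1.

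For part 2, recall that $x \sim y$ abbreviates ($x \lesssim y$ and $y \lesssim x$), and that $x \lesssim y$ means $N(x) \subseteq N[y]$. Theorem~\ref{ThmForcedCodewordsChar} states (for $|V| \ge 2$) that $x$ is a forced codeword regarding self-location-domination if and only if there is some $v \ne x$ with $N(x) \subseteq N[v]$, which is exactly the statement that there exists $y \ne x$ with $x \lesssim y$. So part 2 is a direct restatement of the theorem in the new notation, and I would simply cite Theorem~\ref{ThmForcedCodewordsChar} and observe that the condition ``$\exists\, y \ne x$ with $x \lesssim y$'' transcribes it verbatim.

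For part 1, the plan is to prove two biconditionals by translating $\sim$ into neighbourhood containments and then splitting on whether $x,y$ are adjacent. By definition $x \sim y$ holds iff $N(x) \subseteq N[y]$ and $N(y) \subseteq N[x]$. First suppose $x$ and $y$ are neighbours, so $y \in N(x)$ and $x \in N(y)$. If $x \sim y$, then $N(x) \subseteq N[y]$ gives $N(x) \setminus \{y\} \subseteq N[y] \setminus \{y\} \subseteq N(y) \cup \{x\}$, and combined with $y \in N(x)$ and the symmetric inclusion I would deduce $N[x] = N[y]$, i.e. true twins; conversely true twins trivially satisfy both containments. When $x$ and $y$ are \emph{not} neighbours, the key observation is that $x \notin N(y)$ and $y \notin N(x)$, so the closed-neighbourhood containment $N(x) \subseteq N[y] = N(y) \cup \{y\}$ forces $N(x) \subseteq N(y)$ (since $y \notin N(x)$), and symmetrically $N(y) \subseteq N(x)$, yielding $N(x) = N(y)$, i.e. false twins; the converse is again immediate.

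The only real subtlety — and the step I would check most carefully — is the bookkeeping with the distinction between open and closed neighbourhoods when $x$ and $y$ \emph{are} adjacent, where the presence of $y$ in $N(x)$ and of $x$ in $N(y)$ means one must verify that the ``$+\{x\}$'' and ``$+\{y\}$'' terms line up correctly to upgrade open-neighbourhood containment to closed-neighbourhood equality. The non-adjacent case is cleaner precisely because the extra vertices $x,y$ lie outside the relevant open neighbourhoods, so no such upgrading is needed. I expect no genuine obstacle beyond this careful set-theoretic accounting.
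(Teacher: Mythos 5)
Your proof is correct and matches the paper's approach: the paper states this lemma without proof, treating it as an immediate unwinding of the definitions, and your part~2 is precisely the observation the authors make in the text just before the lemma (that $x$ is a forced codeword iff $x \lesssim y$ for some $y \neq x$, by Theorem~\ref{ThmForcedCodewordsChar}). Your part~1 case analysis --- using adjacency to upgrade the containments $N(x) \subseteq N[y]$ and $N(y) \subseteq N[x]$ to $N[x]=N[y]$, and using non-adjacency to reduce them to $N(x)=N(y)$ --- is exactly the routine verification the paper leaves implicit, and it is carried out correctly.
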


As a consequence, we obtain the following properties of extreme graphs, for the self-location-domination number.
\begin{corollary} \label{cor:maximalSLD}
Let $G$ be a graph of order {$n\ge 2$}. Then $\gamma^{SLD}(G)=n$
if and only if every maximal vertex in the vicinal preorder has a
twin.
\end{corollary}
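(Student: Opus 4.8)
The plan is to translate the statement $\SLD(G)=n$ into a purely order-theoretic condition on the vicinal preorder and then read off the characterization. First I would observe that, since $V$ is always a self-locating-dominating code and has exactly $n$ vertices, the equality $\SLD(G)=n$ holds precisely when $V$ is the \emph{unique} self-locating-dominating code of $G$. This in turn is equivalent to every vertex being a forced codeword: if every vertex is forced then every self-locating-dominating code contains all of $V$ and hence equals $V$, while conversely if $V$ is the only such code then trivially every vertex lies in every self-locating-dominating code. Thus the corollary reduces to showing that every vertex of $G$ is a forced codeword if and only if every maximal vertex has a twin.

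For this reduction I would use the characterization of forced codewords from Theorem~\ref{ThmForcedCodewordsChar} (equivalently, part~(2) of the preceding Lemma): a vertex $x$ is a forced codeword if and only if there is some $y\neq x$ with $x\lesssim y$. The key simplifying observation is that every \emph{non-maximal} vertex is automatically forced: if $x$ is not maximal then $x<y$ for some $y$, so $x\lesssim y$ with $y\neq x$ (as $x\lesssim x$ forbids $y=x$). Consequently the condition ``every vertex is forced'' is equivalent to the a priori weaker condition ``every maximal vertex is forced'', and it remains only to analyse maximal vertices.

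It then suffices to prove, for a maximal vertex $m$, that $m$ is forced if and only if $m$ has a twin. One direction is direct: if $m$ has a twin $y$, then in either the true- or the false-twin case we get $N(m)\subseteq N[y]$, i.e.\ $m\lesssim y$ with $y\neq m$, so $m$ is forced. For the converse, if $m$ is forced there is some $y\neq m$ with $m\lesssim y$; here maximality of $m$ forbids $m<y$, so together with $m\lesssim y$ we are forced to have $y\lesssim m$ as well, that is $m\sim y$. Part~(1) of the preceding Lemma then converts $m\sim y$ (with $y\neq m$) into the statement that $m$ and $y$ are twins.

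I expect the main obstacle to be the careful handling of the maximal vertices, specifically the step that upgrades $m\lesssim y$ to the symmetric relation $m\sim y$ using maximality, and the subsequent invocation of part~(1) of the Lemma, which splits into the neighbour (true twin) and non-neighbour (false twin) cases. The remaining ingredients---the equivalence between $\SLD(G)=n$ and ``all vertices forced'', and the automatic forcedness of non-maximal vertices---are essentially bookkeeping once the forced-codeword characterization is in hand.
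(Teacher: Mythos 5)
Your proof is correct and takes essentially the same route as the paper's: both reduce $\gamma^{SLD}(G)=n$ to ``every vertex is a forced codeword,'' note that non-maximal vertices are automatically forced, and treat a maximal vertex $m$ by upgrading $m\lesssim y$ to $m\sim y$ via maximality and then invoking the twin characterization from the preceding Lemma. The only difference is that you spell out bookkeeping steps (the equivalence with $V$ being the unique self-locating-dominating code, and why $y\neq x$ in the relation $x<y$) that the paper leaves implicit.
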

\begin{proof}
Suppose that $\gamma^{SLD}(G)=n$, then every vertex of $G$ is a forced codeword. In particular, if $x$ is a maximal vertex in the vicinal preorder, then there exists $y\neq x$ such that $x\lesssim y$. By the maximality of $x$, we obtain that $x\sim y$ and therefore, $x$ and $y$ are twin vertices. Suppose now that every maximal vertex has a twin, so maximal vertices are forced codewords. Let $u\in V(G)$ be a non-maximal vertex. Consequently, there exists $v\in V(G)$ such that $u\lesssim v$ and $u$ is a forced codeword. Therefore, every vertex in $G$ is a forced codeword and $\gamma^{SLD}(G)=n$, as desired.
\end{proof}

Some graphs satisfying the conditions of the previous corollary are, for example, graphs with at least two vertices with full degree, that is, vertices which are connected to all other vertices.
By the previous corollary, we immediately obtain the following result.
\begin{corollary}
If $G$ is a twin-free graph of order {$n\ge 2$}, then we have
$\gamma^{SLD}(G)\leq n-1$.
\end{corollary}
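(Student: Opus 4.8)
The plan is to deduce this immediately from Corollary~\ref{cor:maximalSLD} by contraposition. First I would recall two facts already established in the excerpt. On the one hand, $C = V$ is always a self-locating-dominating code, so every graph satisfies $\SLD(G) \le n$; hence it suffices to rule out the equality $\SLD(G) = n$. On the other hand, every finite graph possesses at least one maximal vertex in the vicinal preorder $\lesssim$, as noted just after the definition of $\lesssim$.

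Next I would use the twin-free hypothesis. Fix a maximal vertex $x$ in the vicinal preorder, which exists since $G$ is finite. Because $G$ is twin-free, no vertex of $G$ has a twin, and in particular $x$ has no twin. Consequently the condition characterizing the extreme case in Corollary~\ref{cor:maximalSLD}, namely that \emph{every} maximal vertex in the vicinal preorder has a twin, fails.

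Applying the contrapositive of Corollary~\ref{cor:maximalSLD}, the failure of that condition gives $\SLD(G) \neq n$. Combining this with the trivial upper bound $\SLD(G) \le n$ yields $\SLD(G) \le n-1$, as desired.

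I do not anticipate any real obstacle, since the statement is a direct corollary of the preceding result. The only two points meriting a word of care are the existence of a maximal vertex, which is already guaranteed for finite graphs, and the universal bound $\SLD(G)\le n$ coming from the code $C=V$; once these are in place, the contrapositive argument closes the proof in one line.
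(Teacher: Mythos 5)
Your proof is correct and follows essentially the same route as the paper: the paper states this corollary as an immediate consequence of Corollary~\ref{cor:maximalSLD}, and your argument (a maximal vertex exists in any finite graph, twin-freeness means it has no twin, so the extremal characterization fails and $\gamma^{SLD}(G)\neq n$, while $C=V$ gives $\gamma^{SLD}(G)\leq n$) is exactly the intended reasoning, spelled out with appropriate care about the existence of a maximal vertex.
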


In order to characterize graphs having the greatest solid-location-domination number, we will use the Dilworth number, whose definition we quote from~\cite{FoHa1978}. The \emph{Dilworth number} $\nabla(G)$ of a graph $G$ is the minimum number of chains of the vicinal preorder covering $V(G)$. According to the well-known theorem of Dilworth (see~\cite{Di1950}), $\nabla(G)$ is equal to the cardinality of the maximum size antichains in the vicinal preorder. In the following results, we describe the relationship between the Dilworth number and the solid-location-domination number.
\begin{lemma}
Let $G$ be a graph and $C$ be a solid-locating-dominating code. Then $V(G)\setminus C$ is an antichain of the vicinal preorder.
\end{lemma}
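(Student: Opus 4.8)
The plan is to argue by contradiction, showing that any comparable pair of distinct non-codewords would violate the defining inclusion property of a solid-locating-dominating code. Write $A = V(G)\setminus C$ and suppose $A$ is not an antichain of the vicinal preorder. Then by the definition of an antichain there exist distinct vertices $x,y\in A$ with $x\lesssim y$, that is, $N(x)\subseteq N[y]$.

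The key step is to translate this neighbourhood inclusion into an inclusion of the corresponding $I$-sets. Since $x\notin C$, the vertex $x$ itself contributes nothing to its own identifying set, so $I(C;x)=N[x]\cap C=N(x)\cap C$. Intersecting both sides of $N(x)\subseteq N[y]$ with the code $C$ then yields $N(x)\cap C\subseteq N[y]\cap C$, and hence $I(C;x)=N(x)\cap C\subseteq N[y]\cap C=I(C;y)$. Thus the inclusion $x\lesssim y$ forces $I(C;x)\subseteq I(C;y)$.

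This inclusion gives $I(C;x)\setminus I(C;y)=\emptyset$ for the distinct non-codewords $x,y\in V\setminus C$, which directly contradicts Definition~\ref{defDLD}. Therefore no comparable pair of distinct vertices exists in $A$, so $x\lesssim y$ with $x,y\in A$ implies $x=y$, and $A=V(G)\setminus C$ is an antichain. I expect no serious obstacle here: the proof is a short direct computation, and the only point requiring care is the reduction $I(C;x)=N(x)\cap C$, which relies on $x$ being a non-codeword, together with applying the inclusion in the correct direction so that it matches the asymmetric condition $I(C;u)\not\subseteq I(C;v)$ of Definition~\ref{defDLD}.
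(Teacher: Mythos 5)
Your proof is correct and follows essentially the same route as the paper's: both reduce $x\lesssim y$ to the $I$-set inclusion $I(C;x)=N(x)\cap C\subseteq N[y]\cap C=I(C;y)$ using $x\notin C$, and then invoke the defining condition of a solid-locating-dominating code. The only difference is that you phrase it as a contradiction while the paper argues directly that $x\lesssim y$ forces $x=y$, which is purely cosmetic.
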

\begin{proof}
Let $x,y\in V(G)\setminus C$ and suppose that $x\lesssim y$. Hence $N(x)\subseteq N[y]$. Because $x,y\notin C$, we obtain that $I(x) = N[x]\cap C = N(x)\cap C \subseteq N[y]\cap C = I(y).$
Therefore, $x=y$ by the definition of a solid-locating-dominating code.
\end{proof}

Using the previous result, we obtain the following lower bound.
\begin{corollary}\label{cor:lowerbound}
Let $G$ be a graph with $n$ vertices. Then $n-\nabla (G)\leq \gamma^{DLD}(G)$.
\end{corollary}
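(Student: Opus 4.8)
The plan is to combine the immediately preceding lemma with Dilworth's theorem; the corollary then follows from a one-line counting argument, since essentially all of the substantive work has already been done in establishing that the complement of a solid-locating-dominating code is an antichain of the vicinal preorder.

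First I would fix an \emph{optimal} solid-locating-dominating code $C$ in $G$, so that $|C| = \gamma^{DLD}(G)$. Taking an optimal code is exactly what allows me to pass from a statement about a particular code to one about the parameter $\gamma^{DLD}(G)$. By the preceding lemma, the set $V(G) \setminus C$ is an antichain of the vicinal preorder. Now I invoke Dilworth's theorem in the form recalled just before the lemma, namely that $\nabla(G)$ equals the maximum cardinality of an antichain in the vicinal preorder; in particular, every antichain has size at most $\nabla(G)$. Applying this to the antichain $V(G) \setminus C$ gives
\[
|V(G) \setminus C| \leq \nabla(G).
\]
Since $|V(G) \setminus C| = n - |C| = n - \gamma^{DLD}(G)$, substituting and rearranging yields $n - \nabla(G) \leq \gamma^{DLD}(G)$, which is the desired bound.

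There is essentially no serious obstacle here: the only points requiring a little care are to invoke the correct direction of Dilworth's theorem (the one that bounds antichain size from above by $\nabla(G)$, rather than the chain-covering formulation) and to make sure the code $C$ is optimal so that $|C|$ may legitimately be replaced by $\gamma^{DLD}(G)$. The genuine content of the argument resides entirely in the preceding lemma.
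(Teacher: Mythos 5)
Your proof is correct and is essentially identical to the paper's: both take an optimal solid-locating-dominating code $C$, apply the preceding lemma to see that $V(G)\setminus C$ is an antichain of the vicinal preorder, and then use the Dilworth-number characterization (maximum antichain size equals $\nabla(G)$) to conclude $n-\gamma^{DLD}(G)=\vert V(G)\setminus C\vert \leq \nabla(G)$.
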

\begin{proof}
Let $C$ be an optimal solid-locating-dominating code of $G$. Then the set $V(G)\setminus C$ is an antichain of the vicinal preorder of $G$ and, therefore,
$$n-\gamma^{DLD}(G)=\vert V(G)\vert -\vert C \vert =\vert V(G)\setminus C\vert \leq \nabla (G).$$
\end{proof}

This lower bound for the solid-location-domination number will allow us to characterize graphs where this parameter reaches its maximum value $n-1$ among graphs with at least one edge. Recall that a graph is a \emph{threshold graph}~\cite{ChHa1973} if it can be constructed from the empty graph by repeatedly adding either an isolated vertex or a universal vertex (sometimes also called a dominating vertex), i.e., a vertex adjacent to all the existing vertices. It is well known that the following statements are equivalent \cite{Treshold}:
\begin{itemize}
\item $G$ is a threshold graph,
\item $\nabla (G)=1$,
\item the vicinal preorder in $V(G)$ is total, that is, $V(G)$ is a chain of the vicinal preorder.
\end{itemize}

In the following proposition, we characterize all the graphs $G$ attaining the maximum solid-location-domination number of $n-1$ (when we have at least one edge in a graph).
\begin{proposition}
Let $G$ be a graph of order $n$ and size $m\geq1$. Then $\gamma^{DLD}(G)=n-1$ if and only if $G$ is a threshold graph.
\end{proposition}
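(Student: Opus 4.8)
The plan is to prove both implications of the characterization, relying on the lower bound $n - \nabla(G) \le \DLD(G)$ from Corollary~\ref{cor:lowerbound} together with the upper bound $\DLD(G) \le n-1$ from Theorem~\ref{ThmDLDn-1}, and on the stated equivalence that $G$ is a threshold graph if and only if $\nabla(G) = 1$.

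\textbf{The easy direction.} First I would prove that if $G$ is a threshold graph, then $\DLD(G) = n-1$. Since $G$ is a threshold graph, we have $\nabla(G) = 1$, so Corollary~\ref{cor:lowerbound} gives $n - 1 = n - \nabla(G) \le \DLD(G)$. On the other hand, a threshold graph with at least one edge (size $m \ge 1$) must contain a non-isolated vertex, so Theorem~\ref{ThmDLDn-1} yields $\DLD(G) \le n-1$. Combining the two inequalities gives $\DLD(G) = n-1$, as desired.

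\textbf{The harder direction.} For the converse, I would assume $\DLD(G) = n-1$ and aim to show $\nabla(G) = 1$, which by the stated equivalence means $G$ is a threshold graph. The natural route is the contrapositive: suppose $\nabla(G) \ge 2$, i.e., the vicinal preorder admits an antichain $\{x, y\}$ of two distinct vertices, meaning $x \not\lesssim y$ and $y \not\lesssim x$. The goal is to construct a solid-locating-dominating code of size at most $n-2$, forcing $\DLD(G) \le n-2 < n-1$. The candidate code is $C = V(G) \setminus \{x, y\}$. I would verify it is solid-locating-dominating by checking the condition $I(C;u) \setminus I(C;v) \ne \emptyset$ for the only relevant pair of non-codewords, namely $u,v \in \{x,y\}$, and also that $I(C;x), I(C;y)$ are non-empty. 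Here the antichain condition $N(x) \not\subseteq N[y]$ and $N(y) \not\subseteq N[x]$ should be exactly what guarantees $I(C;x) \not\subseteq I(C;y)$ and vice versa.

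\textbf{The main obstacle.} The delicate point will be that removing \emph{two} vertices $x,y$ from $V$ changes the $I$-sets of $x$ and $y$ relative to the full code: we have $I(C;x) = N(x) \cap C = N(x) \setminus \{y\}$ and $I(C;y) = N(y) \setminus \{x\}$, so the comparison is between $N(x)\setminus\{y\}$ and $N[y]\setminus\{x\}$ rather than directly between $N(x)$ and $N[y]$. I would therefore need to argue carefully that $N(x) \not\subseteq N[y]$ still yields a witness vertex in $N(x) \setminus \{y\}$ not lying in $N[y]$ (such a witness cannot be $y$ itself, and any witness outside $\{x,y\}$ is a codeword), and symmetrically for the other containment; I would also confirm non-emptiness of both $I$-sets, handling the possibility that one of $x,y$ is isolated by noting that an isolated vertex $x$ satisfies $N(x) = \emptyset \subseteq N[y]$, hence $x \lesssim y$, contradicting the antichain assumption. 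Verifying these set-theoretic witnesses survive the deletion of the second non-codeword is the crux, but it follows directly from $\{x,y\}$ being an antichain.
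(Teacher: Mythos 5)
Your proposal is correct and is essentially the paper's own proof: the easy direction is identical (Corollary~\ref{cor:lowerbound} plus Theorem~\ref{ThmDLDn-1}), and your hard direction is just the contrapositive of the paper's, which assumes $\gamma^{DLD}(G)=n-1$, observes that $C=V(G)\setminus\{x,y\}$ then cannot be solid-locating-dominating for any pair $x\neq y$, and extracts $x\lesssim y$ or $y\lesssim x$ from the same identities $I(C;x)=N(x)\setminus\{y\}$ and $I(C;y)=N(y)\setminus\{x\}$ that you use to verify your code. The crux you flag --- that a witness for $N(x)\not\subseteq N[y]$ survives the deletion of $y$ because it cannot equal $y$ --- is exactly the set-theoretic step the paper performs in reverse, so both arguments coincide in substance.
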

\begin{proof}
Theorem \ref{ThmDLDn-1} gives that $\gamma^{DLD}(G)\leq n-1$. If $G$ is a threshold graph, then $\nabla (G)=1$ and $n-1=n-\nabla (G)\leq \gamma^{DLD}(G)$. Hence, $\gamma^{DLD}(G)=n-1$.

Suppose now that $\gamma^{DLD}(G)=n-1$ and let $x,y\in V(G)$ be such that $x\neq y$. We will show that $x\lesssim y$ or $y\lesssim x$. Denote $C=V(G)\setminus\{x,y\}$. Observe that $C$ is not a solid-locating-dominating code as $\gamma^{DLD}(G)=n-1$. {If $I(C;x) = I(x) =\emptyset$, then $N(x)\subseteq N[y]$ and $x\lesssim y$. Analogously $I(y)=\emptyset$ implies $y\lesssim x$. Assume now that $I(x)\neq \emptyset$ and $I(y)\neq \emptyset$. Because $C$ is not  solid-locating-dominating,} we obtain $I(x)\subseteq I(y)$ or $I(y)\subseteq I(x)$. We may assume without loss of generality that $I(x)\subseteq I(y)$. Now we have $N(x)\setminus \{y\} = N[x]\cap (V(G)\setminus \{x,y\})=I(x)\subseteq I(y)= N[y]\cap (V(G)\setminus \{x,y\})=N(y)\setminus \{x\}\subseteq N(y)$. Therefore $N(x)\subseteq N(y)\cup \{y\}$ or equivalently $x\lesssim y$. For every pair of vertices $x,y\in V(G)$, we have obtained that $x\lesssim y$ or $y\lesssim x$. This means that the vicinal preorder is total or equivalently that $G$ is a threshold graph.
\end{proof}

\subsection{Independent Sets and Complements}\label{secIndandComp}

In what follows, we present upper bounds on the self-location-domination and solid-location-domination numbers based on the independence number and the maximum degree of the graph. Recall that a set $S \subseteq V(G)$ is \emph{independent} in $G$ if no two vertices in $S$ are adjacent. Furthermore, the \textit{independence number} $\beta(G)$ of $G$ is the maximum size of an independent set in $G$. Moreover, a set $S$ is called \emph{$3$-distance-independent} if   we have $d(v,u)\geq3$ for each pair of vertices $v,u\in S$. We denote the maximal size of $3$-distance-independent set in $G$ with $\beta_2(G)$. Now we are ready to present the following theorem.
\begin{theorem} \label{DeltaBounds}  Let $G=(V,E)$ be a connected graph on $n\ge 2$ vertices with maximum degree $\Delta$.
\begin{itemize}
\item[(i)] Then we have
$$\DLD(G)\le n-\beta_2(G)\leq \left\lfloor n\left(1-\frac{1}{\Delta^2+1}\right) \right\rfloor.$$
\item[(ii)] If $G$ has  the additional property that $N(u)\not\subseteq N(v)$ for all distinct vertices $u,v\in V$,  then $$\DLD(G)\le n-\beta(G)\le \left\lfloor n\left(1-\frac{1}{\Delta+1}\right) \right\rfloor.$$
\item[(iii)] If $G$ has the property that $N(u)\not\subseteq N[v]$ for all distinct vertices $u,v\in V$, then
$$\SLD(G)\le n-\beta(G) \le \left\lfloor n\left(1-\frac{1}{\Delta+1}\right) \right\rfloor.$$
\end{itemize}
\end{theorem}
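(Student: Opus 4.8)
All three inequalities split into two independent pieces, and all three share a single template for the first (code-theoretic) piece. The plan is: in each part choose an appropriately ``spread out'' set $S$, delete it from $V$, and show that the complement $C=V\setminus S$ is a code of the required type; taking $S$ of maximum size then yields the first inequality, namely $\DLD(G)\le n-\beta_2(G)$ in (i), $\DLD(G)\le n-\beta(G)$ in (ii), and $\SLD(G)\le n-\beta(G)$ in (iii). The second inequality in each part is a classical greedy packing estimate that bounds the relevant independence parameter from below, and is proved separately.

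\textbf{The solid cases (i) and (ii).}
For (ii) I would let $S$ be a maximum independent set and put $C=V\setminus S$. Since $S$ is independent, every neighbour of a vertex $u\in S$ lies in $C$, so $I(C;u)=N(u)\cap C=N(u)$; connectedness with $n\ge 2$ forbids isolated vertices, hence $I(C;u)\neq\emptyset$ and $C$ dominates all non-codewords. For distinct $u,v\in S$ we then have $I(C;u)\setminus I(C;v)=N(u)\setminus N(v)$, which is nonempty \emph{exactly} by the hypothesis $N(u)\not\subseteq N(v)$; thus $C$ is solid-locating-dominating and $\DLD(G)\le n-\beta(G)$. Part (i) is the same argument with $S$ a maximum $3$-distance-independent set: vertices of $S$ being pairwise at distance $\ge 3$ again forces $N(u)\subseteq C$ (so $I(C;u)=N(u)\neq\emptyset$), and moreover $N(u)\cap N(v)=\emptyset$ for distinct $u,v\in S$, since a common neighbour would give $d(u,v)\le 2$; hence $I(C;u)\setminus I(C;v)=N(u)\neq\emptyset$ and $C$ is solid-locating-dominating, giving $\DLD(G)\le n-\beta_2(G)$. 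Here the separation property comes for free from the distance condition, whereas in (ii) it is precisely the extra hypothesis.

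\textbf{The self case (iii) and the packing bounds.}
For (iii) I would again take $S$ a maximum independent set and $C=V\setminus S$, but verify self-location-domination through characterization (i) of Theorem~\ref{ThmCharacterizationSLDDLD}: one must check $I(C;u)\setminus I(C;v)\neq\emptyset$ for every $u\in S$ and \emph{every} $v\in V$ with $v\neq u$. Since $N(u)\subseteq C$, the key computation is $I(C;u)\setminus I(C;v)=N(u)\setminus(N[v]\cap C)=N(u)\setminus N[v]$, and this is nonempty exactly by the hypothesis $N(u)\not\subseteq N[v]$; hence $C$ is self-locating-dominating and $\SLD(G)\le n-\beta(G)$. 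For the second inequalities, a maximal $3$-distance-independent set $S$ has the property that every vertex lies within distance $2$ of some element of $S$, and a ball of radius $2$ contains at most $1+\Delta+\Delta(\Delta-1)=\Delta^2+1$ vertices, so $n\le|S|(\Delta^2+1)$ and $\beta_2(G)\ge n/(\Delta^2+1)$; likewise a maximal independent set is dominating, each closed ball of radius $1$ has at most $\Delta+1$ vertices, so $\beta(G)\ge n/(\Delta+1)$. Since $n-\beta_2(G)$ and $n-\beta(G)$ are integers, these bounds immediately yield $n-\beta_2(G)\le\lfloor n(1-\tfrac{1}{\Delta^2+1})\rfloor$ and $n-\beta(G)\le\lfloor n(1-\tfrac{1}{\Delta+1})\rfloor$.

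\textbf{Main obstacle.}
The packing bounds and the solid-case verifications are routine once the right set $S$ is chosen; the only genuinely delicate point is the verification in (iii), where $v$ ranges over \emph{all} of $V$ (including codewords) and $I(C;v)$ uses the closed neighbourhood. The identity $I(C;u)\setminus I(C;v)=N(u)\setminus N[v]$ hinges on the containment $N(u)\subseteq C$, so the care needed is to confirm this reduction and then recognize that it matches the hypothesis of (iii) precisely. I expect this bookkeeping, rather than any substantive difficulty, to be the part most in need of attention.
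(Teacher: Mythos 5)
Your proposal is correct and follows essentially the same route as the paper: in each part the code is the complement of a maximum independent (respectively $3$-distance-independent) set, verified to be solid-/self-locating-dominating just as you do, with the numerical bounds coming from the standard packing estimates ($\beta(G)\geq n/(\Delta+1)$, which the paper cites from Berge, and $\beta_2(G)\geq n/(\Delta^2+1)$, which the paper obtains by a greedy construction equivalent to your maximality argument). The only cosmetic differences are that the paper verifies (iii) directly from Definition~\ref{defSLD} rather than via characterization~(i) of Theorem~\ref{ThmCharacterizationSLDDLD}, and in (ii) it splits into the cases $d(u,v)\geq 3$ and $d(u,v)=2$ instead of computing $I(C;u)\setminus I(C;v)=N(u)\setminus N(v)$ uniformly.
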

\begin{proof}
(i) Let us first consider a set $S\subseteq V$ which is obtained in the following way. Let $T_1=V.$ We choose first any $u_1\in T_1$ and then we set $T_2=T_1\setminus \cup_{v\in N(u_1)}N[v]$. Next we choose $u_2\in T_2$ and set $T_3=T_2\setminus \cup_{v\in N(u_2)}N[v].$ We continue this way by choosing $u_i\in T_i$ and defining $T_{i+1}=T_i\setminus \cup_{v\in N(u_i)}N[v]$ until $T_{i+1}=\emptyset$. Now we denote $S=\{u_1,u_2,\dots\}$ (this is a finite set).
Since the maximum degree equals $\Delta$, we know that on each round we remove from $T_i$ at most $\Delta^2+1$ vertices. Therefore,
$$
|S|\ge \frac{n}{\Delta^2+1}.
$$
Next we show that the code $C=V\setminus S$ is solid-locating-dominating. Observe that the  distance between two vertices  in $S$ (that is, the non-codewords in $V$) is at least three and hence, $S$ is $3$-distance-independent. Consequently, $I(u)\setminus I(v)=N(u)\neq \emptyset$ for any distinct non-codewords $u$ and $v$ (if $|S|=1$ we are immediately done). Thus,
$$
\DLD(G)\le |C|=n-|S|\le  n\left(1-\frac{1}{\Delta^2+1}\right).
$$

(ii)
In this case, let $S$ be an independent set in $G$ with $|S|=\beta(G).$ In what follows, we show that the code $C=V\setminus S$ is solid-locating-dominating. Let $u$ and $v$ be any non-codewords. If $d(u,v)\ge 3$, then clearly  $I(u)\setminus I(v)\neq \emptyset$ as above. Since $S$ is an independent set, it suffices to assume then that $d(u,v)=2$. We need to show that $I(u)\setminus I(v)\neq \emptyset.$ Notice that now $I(u)=N(u)$ and $I(v)=N(v)$. If $I(u)\setminus I(v)=\emptyset$, then  $N(u)\setminus N(v)= \emptyset$, which contradicts the property of the graph. Therefore, we have $$ \DLD(G)\le  n - \beta(G).$$ Furthermore, it is shown in \cite[page $278$]{BergeInd} that 
$$
|S|=\beta(G)\ge \frac{n}{\Delta+1}.
$$

(iii) Let $S$ be as in Case (ii) and $C=V\setminus S$. Take any $u\notin C$, that is, $u\in S$. Again $I(u)=N(u).$ We need to show that
$$
\bigcap_{c\in I(u)}N[c]=\{u\}.
$$
Assume to the contrary that the intersection contains another vertex, say $v \in V$, besides $u$. But this implies that $N(u)\subseteq N[v]$ which is not possible. Therefore, the assertion follows.
\end{proof}

The constraints $N(u)\not\subseteq N(v)$ and $N(u)\not\subseteq N[v]$ for all distinct vertices $u,v\in V$ have their purpose in the cases~(ii) and (iii) of the previous theorem. For example, if $G$ is a star on $n$ vertices and $v,v'$ are two distinct pendant vertices, then $N(v)\subseteq N(v')$. Moreover, we have $\beta(G)=\SLD(G)=\DLD(G)=n-1$ while $n-\beta(G)=1$. Observe also that the bound of (i) is now attained since we have $\beta_2(G)=1$ and $\DLD(G)= n-1 = n-\beta_2(G)$.


The bounds (ii) and (iii) of Theorem~\ref{DeltaBounds} can be attained, for example, when $G=C_t$ is a cycle on $t\geq5$ vertices. In these cases, we have $\beta(G)=\left\lfloor\frac{t}{2}\right\rfloor$. This implies that $\DLD(G)\leq \SLD(G)\leq\left\lceil\frac{t}{2}\right\rceil$ by the previous theorem. Moreover, let $C$ be a solid-locating-dominating code in a cycle $C_t$ where $t\geq5$ and let us consider four consecutive vertices $P=\{v_1,v_2,v_3,v_4\}$ of the cycle, where $v_iv_{i+1}\in E$ ($i \in \{1,2,3\}$). If $v_1$ is the only codeword in $P$, then $I(v_3)=\emptyset$. If $v_2$ is the only codeword in $P$, then $I(v_3)\subseteq I(v_1)$. The cases with $v_3$ and $v_4$ being the only codewords are symmetric. Hence, we have at least two codewords among every four consecutive vertices and there are $t$ different sets consisting of four consecutive vertices. On the other hand, each codeword belongs to four different sets of consecutive vertices. Therefore, by a double counting argument, we obtain that $4|C|\geq2t$ and hence, $\DLD(C_t)\geq \left\lceil\frac{t}{2}\right\rceil$. Thus, in conclusion, we have $\DLD(G) = \SLD(G)= \left\lceil\frac{t}{2}\right\rceil$ and the bounds~(ii) and (iii) are attained.

\medskip

We conclude the section by considering self-location-domination and solid-location-domination numbers in a graph and its complement. It has been shown in~\cite{HMPng} that in a graph and its complement the (regular) location-domination number always differs by at most one. In the following theorem, we show that a similar result also holds for solid-location-domination number. However, later in Remark~\ref{RemarkComplement}, it is shown that an analogous result \emph{does not} hold for self-location-domination number.
\begin{theorem}\label{komplementti}
Let $G$ be a graph on at least two vertices and $\overline{G}$ be its complement. We have $|\gamma^{DLD}(G)-\gamma^{DLD}(\overline{G})|\leq 1$ and the optimal codes are of different cardinality if and only if  $G$ is a complete or discrete graph.
\end{theorem}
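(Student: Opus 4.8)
The plan is to move a solid-locating-dominating code directly between $G$ and its complement $\overline{G}$, exploiting the fact that complementation reverses inclusions among the $I$-sets of non-codewords. Concretely, fix a code $C$ and a vertex $u\in V\setminus C$. Since $N_{\overline{G}}[u]=V\setminus N(u)$, one gets $I(\overline{G},C;u)=C\setminus I(G,C;u)$; that is, the $\overline{G}$-identifying set of a non-codeword is the complement, inside $C$, of its $G$-identifying set. The first step is to record this identity and its immediate consequence: for distinct $u,v\in V\setminus C$,
\[
I(\overline{G},C;u)\subseteq I(\overline{G},C;v)\iff I(G,C;v)\subseteq I(G,C;u),
\]
since complementation within the finite set $C$ reverses inclusion among subsets of $C$.

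Next I would use this to prove the inequality. Let $C$ be an optimal solid-locating-dominating code of $G$, so $\DLD(G)=|C|$, and put $A=V\setminus C$. For distinct $u,v\in A$, the solid-locating-dominating property of $C$ in $G$ gives $I(G,C;v)\not\subseteq I(G,C;u)$, which by the displayed equivalence is exactly the locating condition $I(\overline{G},C;u)\not\subseteq I(\overline{G},C;v)$ required in $\overline{G}$. Thus the locating requirement transfers for free, and the only thing that can fail for $C$ in $\overline{G}$ is domination: by the identity, $I(\overline{G},C;u)=\emptyset$ occurs precisely when $I(G,C;u)=C$. The key sub-step is that such a vertex is forced to be isolated in $A$: if some $u_0\in A$ has $I(G,C;u_0)=C$, then every other $w\in A$ satisfies $I(G,C;w)\subseteq C=I(G,C;u_0)$, contradicting that $C$ is solid-locating-dominating in $G$; hence $A=\{u_0\}$ and $\DLD(G)=n-1$. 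Consequently either no domination failure occurs and $C$ is already solid-locating-dominating in $\overline{G}$, giving $\DLD(\overline{G})\le\DLD(G)$; or $A=\{u_0\}$ and the full code $V=C\cup\{u_0\}$ is trivially solid-locating-dominating in $\overline{G}$, giving $\DLD(\overline{G})\le n=\DLD(G)+1$. In both cases $\DLD(\overline{G})\le\DLD(G)+1$, and applying the same argument to $\overline{G}$ (using $\overline{\overline{G}}=G$) yields $|\DLD(G)-\DLD(\overline{G})|\le1$.

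For the characterization, the direction $(\Leftarrow)$ is a direct computation: $K_n$ is a threshold graph with an edge, so $\DLD(K_n)=n-1$ by the earlier proposition, while $\DLD(D_n)=n$; as $\overline{K_n}=D_n$, the complete and discrete graphs realize the difference $1$. For $(\Rightarrow)$, suppose the optimal sizes differ, say without loss of generality $\DLD(\overline{G})=\DLD(G)+1$. If $\DLD(G)\le n-2$, then any optimal code $C$ has $|A|=n-\DLD(G)\ge 2$, so by the uniqueness sub-step no non-codeword can have $I(G,C;u)=C$; hence no domination failure occurs in $\overline{G}$, the code $C$ is solid-locating-dominating there, and $\DLD(\overline{G})\le\DLD(G)$, a contradiction. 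Therefore $\DLD(G)\in\{n-1,n\}$. If $\DLD(G)=n$ then $G$ has no edge, i.e.\ $G=D_n$ is discrete; if $\DLD(G)=n-1$ then $\DLD(\overline{G})=n$, so $\overline{G}$ has no edge and $G=K_n$ is complete.

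The technical heart, and the only place needing care, is the domination analysis in the second step: showing that the locating condition transfers automatically and isolating the single obstruction (a non-codeword whose $I$-set equals all of $C$), which then collapses because it forces the non-codeword set to be a singleton and hence $\DLD(G)=n-1$. Notably, this route never invokes the Dilworth number; once the complement identity $I(\overline{G},C;u)=C\setminus I(G,C;u)$ is in place, both the bound and the exact description of the extremal graphs follow with no further work.
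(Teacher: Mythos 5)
Your proposal is correct and takes essentially the same approach as the paper's proof: the complement identity $I(\overline{G},C;u)=C\setminus I(G,C;u)$ with its reversal of inclusions among $I$-sets of non-codewords, the isolation of the single obstruction (a non-codeword $u$ with $I(G,C;u)=C$, which forces $V\setminus C$ to be a singleton and hence $\gamma^{DLD}(G)=n-1$), and Theorem~\ref{ThmDLDn-1} to pin down the extremal graphs. If anything, your explicit case analysis ($\gamma^{DLD}(G)\le n-2$ versus $\gamma^{DLD}(G)\in\{n-1,n\}$) assembles the final characterization more transparently than the paper's rather condensed ending, but the underlying ideas are identical.
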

\begin{proof}
Let $C$ be an optimal solid-locating-dominating code in $G$ and $v\in V(G)\setminus C$. Suppose that $I(G,C;w)\neq C$ for each vertex $w\in V(G)\setminus C$. Hence, $I(\overline{G},C;w)\neq \emptyset$ for each vertex $w\in V(G)\setminus C$. We have $I(\overline{G},C;v)=C\setminus I(G,C;v)$ and $I(G,C;v)=C\setminus I(\overline{G},C;v)$. If there exists a vertex $u\in V(G)\setminus C$ such that $I(\overline{G},C;u)\subseteq I(\overline{G},C;v)$, then $C\setminus I(G,C;u)\subseteq C\setminus I(G,C;v)$ and hence, $I(G,C;v)\subseteq I(G,C;u)$ which is a contradiction. Therefore, $C$ is also a solid-locating-dominating code for $\overline{G}$ and similarly we get that if $C'$ is a solid-locating-dominating code for $\overline{G}$ with no non-codewords adjacent to all codewords, then it is also a solid-locating-dominating code in $G$.

Let us then suppose that there is a vertex $v$ such that $I(G,C;v)=C$ and $v\in V(G)\setminus C$. We immediately notice that we then have only one non-codeword since if we had another non-codeword $u$, we would have $I(G,C;u)\subseteq I(G,C;v)$. Furthermore, in $\overline{G}$ we have $N[v]=\{v\}$, vertex $v$ is a codeword and thus, there are no vertices in $V(\overline{G})$ which would contain all codewords in their neighbourhoods. 
Hence, if we have $\gamma^{DLD}(\overline{G})\leq|V|-2$, then by the previous considerations we have $\gamma^{DLD}(G)\leq|V|-2$ which is a contradiction. Therefore, we may assume that $\gamma^{DLD}(\overline{G})\geq |V|-1$. Furthermore, the only graph for which we have $\gamma^{DLD}(\overline{G})=|V|$ is the discrete graph by Theorem \ref{ThmDLDn-1} and in that case $G$ is the complete graph.
\end{proof}

In the following remark, it is shown that an analogous result to the previous theorem does not hold for self-locating-dominating codes; in other words, the difference of the self-location-domination number of the graph and its complement can be arbitrarily large.
\begin{remark} \label{RemarkComplement}
Consider the graph $G = (V \cup U,E)$ of Example~\ref{ExampleMaximalSperner} with $k \geq 4$. Form a new graph $G' = (V \cup U, E')$ based on $G$ by adding edges between each pair of distinct vertices of $U$ (the subgraph graph induced by $U$ is now a clique with $k$ vertices). Then each vertex of $V$ is a forced codeword of a self-locating-dominating code by Theorem~\ref{ThmForcedCodewordsChar}. On the other hand, $V$ is a self-locating-dominating code in $G'$ by the characterization~(i) of Theorem~\ref{ThmCharacterizationSLDDLD}. Indeed, for any distinct vertices $u \in U$ and $w \in U \cup V$ there exists a vertex $v \in V$ such that $u \in N(v)$ and $w \notin N(v)$ (recall that the open neighbourhoods of the vertices in $V$ form a \emph{maximum} Sperner family). Thus, we have $\gamma^{SLD}(G') = |V| = \binom{k}{\lfloor k/2 \rfloor}$.

Consider then the complement graph $\overline{G'}$. Now the subgraph induced by $V$ is a clique and the intersections $N(v) \cap U$ of all the vertices $v \in V$ form a (maximum) Sperner family with $|N(v) \cap U| = \lceil k/2 \rceil$. Hence, as $V$ induces a clique, all the vertices of $U$ are forced codewords (by Theorem~\ref{ThmForcedCodewordsChar}). On the other hand, as in Example~\ref{ExampleMaximalSperner}, it can be shown that $U$ is a self-locating-dominating code in $\overline{G'}$. Thus, we have $\gamma^{SLD}(\overline{G'}) = |U| = k$. Therefore, in conclusion, we have shown that $|\gamma^{SLD}(G') - \gamma^{SLD}(\overline{G'})| = \binom{k}{\lfloor k/2 \rfloor} - k$.
\end{remark}

\section{Trees} \label{SectionTrees}

In this section, we study both the self-location-domination and the solid-location-domination number in \textit{trees}. 
We recall the following definition from \cite{BliChe05}. A \emph{$2$-dominating set} in a graph $G$ is a dominating set $S$ that dominates every vertex of $V\setminus S$ at least twice, i.e., $|I(S;u)| \geq 2$ for all $u \in V \setminus S$. The \emph{$2$-domination number} of $G$, which is the minimum cardinality of a $2$-dominating set of $G$, is denoted by $\gamma_2(G)$. In addition to the $2$-domination number, also the independence number $\beta(G)$ will play a role in this section. In general, both parameters are non-comparable, i.e., there are graphs where either of these values can be larger than the other one. However, we have $\beta (T)\leq \gamma_2(T)$ for every tree $T$ by \cite{BliChe05}. We will prove that, in the case of trees, self-locating dominating codes are precisely the $2$-dominating sets, and therefore the associated parameters also agree. We will also show that solid-location-domination number equals independence number in trees, in spite of associated sets \emph{are not agreeing} in general.

First of all, we focus on the relationship between self-locating-dominating codes and $2$-dom\-i\-nat\-ing sets. However, we require the concept of \textit{girth} of a graph $G$, that is, the length of shortest cycle in $G$. The graphs without cycles are considered to have an infinite girth.
\begin{lemma}\label{lem:SLD2domination}
Let $G$ be a graph.
\begin{itemize}
\item[(i)] Every self-locating-dominating code in $G$ is a $2$-dominating set.
\item[(ii)] If the girth of $G$ is at least $5$, then every $2$-dominating set of $G$ is a self-locating-dominating code.
\end{itemize}
\end{lemma}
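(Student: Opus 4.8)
The plan is to work directly from Definition~\ref{defSLD}, treating the two implications separately; part~(i) is a quick single-codeword argument, while part~(ii) requires the girth hypothesis to rule out short cycles.

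For part~(i), suppose $C$ is self-locating-dominating and take any non-codeword $u \in V \setminus C$. By definition $I(C;u) \neq \emptyset$, so $C$ is dominating, and it remains to exclude $|I(C;u)| = 1$. If $I(C;u) = \{c\}$ for a single codeword $c$, then $\bigcap_{c' \in I(C;u)} N[c'] = N[c]$, which contains $c$. Since $u \notin C$ while $c \in C$ we have $c \neq u$, and as $u \in N[c]$ as well, the set $N[c]$ contains the two distinct vertices $u$ and $c$; hence it cannot equal $\{u\}$, contradicting the defining condition. Therefore $|I(C;u)| \geq 2$ for every non-codeword, which is exactly the $2$-domination property.

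For part~(ii), assume $G$ has girth at least $5$ and let $S$ be a $2$-dominating set. Fix $u \in V \setminus S$; then $|I(S;u)| \geq 2$, so in particular $I(S;u) \neq \emptyset$. Every $c \in I(S;u)$ lies in $N[u]$ and differs from $u$ (as $u \notin S$), hence is a neighbour of $u$, which gives $u \in \bigcap_{c \in I(S;u)} N[c]$. The crux is to show this intersection contains nothing else. Suppose for contradiction that some $w \neq u$ also lies in it, and choose two distinct codewords $c_1, c_2 \in I(S;u)$, possible because $|I(S;u)| \geq 2$. Then both $u$ and $w$ belong to $N[c_1] \cap N[c_2]$. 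If $w \in \{c_1, c_2\}$, say $w = c_1$, then $c_1 \in N[c_2]$ forces $c_1 \sim c_2$, and together with $u \sim c_1$ and $u \sim c_2$ this yields a triangle on $\{u, c_1, c_2\}$, a cycle of length $3$. Otherwise $w \notin \{c_1, c_2\}$, so $w$ is adjacent to both $c_1$ and $c_2$; since $u, c_1, w, c_2$ are four distinct vertices (note $u, w \notin S$ while $c_1, c_2 \in S$), the edges $uc_1, c_1w, wc_2, c_2u$ form a $4$-cycle. Either way we obtain a cycle of length at most $4$, contradicting the girth hypothesis. Hence the intersection equals $\{u\}$, and $S$ satisfies Definition~\ref{defSLD}.

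The main obstacle is part~(ii): one must check that the girth bound is exactly strong enough to eliminate both offending configurations. The subtlety is that an extra vertex $w$ in the intersection can appear either as one of the chosen codewords (producing a triangle) or as a fresh vertex (producing a $4$-cycle), and it is precisely the simultaneous exclusion of cycles of lengths $3$ and $4$ --- that is, girth at least $5$ --- that disposes of both cases. I would take particular care to confirm the distinctness of the four vertices $u, c_1, w, c_2$ in the $4$-cycle case, since that is where the argument could otherwise silently degenerate into a shorter closed walk.
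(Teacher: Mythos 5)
Your proof is correct and follows essentially the same route as the paper's: part (i) rules out singleton $I$-sets because the intersection would then contain the codeword itself, and part (ii) turns a hypothetical extra vertex of the intersection into a triangle or a $4$-cycle, contradicting girth at least $5$ (the paper organizes this slightly differently, first using triangle-freeness to get $c_1 \not\sim c_2$ and then exhibiting an induced $4$-cycle, but the substance is identical). One small remark: in your $4$-cycle case the parenthetical claim that $w \notin S$ is unjustified --- $w$ could be a codeword outside $N[u]$ --- but it is also unnecessary, since the distinctness of $u, c_1, w, c_2$ already follows from $w \neq u$, the case assumption $w \notin \{c_1, c_2\}$, $c_1 \neq c_2$, and the fact that $u \notin S$ while $c_1, c_2 \in S$.
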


\begin{proof}
(i) Let $C$ be a self-locating-dominating code. If there exists $u\in V(G)\setminus C$ such that $I(u)=\{v\}$, then $v\in \bigcap _{c\in I(u)} N[c]$, which is not possible. Hence, $C$ is a $2$-dominating set.

(ii) Let $G$ be a graph with girth at least $5$, $C$ be a $2$-dominating set and $u$ belong to $V(G)\setminus C$. By the hypothesis, there exist $c_1,c_2\in I(u)$, $c_1\neq c_2$, and since $G$ contains no cycles of length three, we know that $c_1$ is not a neighbour of $c_2$. Suppose that there exists $v\neq u$ such that $v\in \bigcap _{c\in I(u)} N[c]$, so $v\in N[c_1]\cap N[c_2]$. Moreover $v\neq c_1,c_2$, because $c_1c_2$ is not an edge of $G$. Again because $G$ has no triangles, we obtain that $u$  is not a neighbour of $v$ and therefore the vertex subset $\{u,c_1,v,c_2\}$ induces a $4$-cycle, a contradiction.
\end{proof}

The following corollary is an immediate consequence of the previous lemma and, in particular, it can be applied to every tree.
\begin{corollary}\label{cor:SLD2domination}
Let $G$ be a graph with girth at least $5$. Then $\gamma^{SLD}(G)=\gamma_2(G)$.
\end{corollary}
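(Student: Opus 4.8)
The plan is to derive Corollary~\ref{cor:SLD2domination} directly from Lemma~\ref{lem:SLD2domination} by showing set equality of the two families of codes under the girth hypothesis, from which the equality of the associated optimal parameters follows at once. Since the statement concerns a graph $G$ with girth at least $5$, both parts (i) and (ii) of the lemma are available: part (i) holds for every graph, and part (ii) applies precisely because the girth assumption is met.

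First I would observe that Lemma~\ref{lem:SLD2domination}(i) says every self-locating-dominating code in $G$ is a $2$-dominating set, while Lemma~\ref{lem:SLD2domination}(ii) says, under the girth hypothesis, that every $2$-dominating set is a self-locating-dominating code. Together these two inclusions establish that, for a graph of girth at least $5$, the collection of self-locating-dominating codes and the collection of $2$-dominating sets coincide \emph{exactly}. This is the conceptual heart of the argument, and it is entirely supplied by the lemma.

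Next I would pass from the equality of the two families to the equality of the two optimal values. Recall that $\gamma^{SLD}(G)$ is the minimum cardinality over all self-locating-dominating codes, and $\gamma_2(G)$ is the minimum cardinality over all $2$-dominating sets. Since the two families are identical as sets of subsets of $V(G)$, their minimum cardinalities are equal, giving $\gamma^{SLD}(G)=\gamma_2(G)$. One should note that both families are non-empty (for instance $C=V(G)$ is both a self-locating-dominating code and trivially a $2$-dominating set when $G$ has no isolated vertices, and in the tree case every tree on at least two vertices has no isolated vertex), so the minima are well defined and the equality is not vacuous.

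There is essentially no obstacle here, as the corollary is a formal consequence of the lemma; the only point requiring a moment's care is to confirm that taking minima over two equal families yields equal values, which is immediate. The substantive content lives entirely in Lemma~\ref{lem:SLD2domination}, particularly in part (ii), where the girth-$5$ hypothesis is used to rule out the formation of a forbidden $4$-cycle among $\{u,c_1,v,c_2\}$; the corollary simply repackages that lemma at the level of graph parameters and records the applicability to trees, which have infinite girth.
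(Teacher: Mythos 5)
Your proposal is correct and matches the paper exactly: the paper treats this corollary as an immediate consequence of Lemma~\ref{lem:SLD2domination}, since parts (i) and (ii) together show the two families of sets coincide under the girth hypothesis, so their minimum cardinalities agree. Your only superfluous step is the caveat about isolated vertices --- $C=V(G)$ is \emph{always} both a self-locating-dominating code and a $2$-dominating set, because both defining conditions quantify over vertices outside the code and are vacuous when there are none.
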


In what follows, we briefly discuss the previous requirement stating that the girth of the graph is at least $5$. Let us consider a graph $G=(V,E)$ where $V=K\cup P$, $K=\{v_1,v_2\}$, $P=\{u_1,\dots, u_p\}$, $p\geq2$, and we have $E=\{v_iu_j\mid v_i\in K, u_j\in P\}$. The graph $G$ has girth $4$, it has a $2$-dominating set $K$ and the unique self-locating-dominating code $C$ consists of whole $V$. Therefore, the requirement of girth at least $5$ is not only needed but removing it may cause an arbitrarily large difference between $\gamma_2(G)$ and $\SLD(G)$.

A particular case of trees are paths, where $2$-dominating numbers are known~\cite{MoKel12}. Therefore, using the above corollary, we obtain:
\begin{corollary}\label{cor:PathSLD} Let $n\geq2$ and $P_n$ be a path. Then we have $$\gamma^{SLD}(P_n)=\gamma_2(P_n)=\left\lceil \frac{n+1}{2}\right\rceil.$$
\end{corollary}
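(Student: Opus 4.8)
The plan is to split the double equality into its two halves and dispatch each using results already in hand. The first equality, $\gamma^{SLD}(P_n)=\gamma_2(P_n)$, follows at once from Corollary~\ref{cor:SLD2domination}: a path is acyclic and hence has infinite girth, so in particular its girth is at least $5$, which is precisely the hypothesis of that corollary. No further work is needed for this half, and the step remains valid for every $n\ge 2$ (including the degenerate case $P_2$, which has a single edge and no cycle).

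The second equality, $\gamma_2(P_n)=\lceil (n+1)/2\rceil$, is the known value of the $2$-domination number of a path and may simply be quoted from~\cite{MoKel12}. For a self-contained treatment I would instead verify it directly as follows. Label the vertices $v_1,\dots,v_n$ along the path and let $S$ be a $2$-dominating set. Each endpoint has a single neighbour, so if $v_1\notin S$ then $|I(S;v_1)|\le 1$; hence both $v_1$ and $v_n$ must lie in $S$. For an interior vertex $v_i\notin S$ the requirement $|I(S;v_i)|\ge 2$ forces both $v_{i-1}$ and $v_{i+1}$ into $S$, so no two consecutive vertices can both be outside $S$. Thus the non-codewords form an independent set contained in $\{v_2,\dots,v_{n-1}\}$, and conversely any such independent complement yields a $2$-dominating set, since every interior non-codeword then has both of its (existing) neighbours in $S$. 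Minimising $|S|$ therefore amounts to maximising the size of an independent set in the interior path $P_{n-2}$, whose largest independent set has $\lceil (n-2)/2\rceil$ vertices; hence $\gamma_2(P_n)=n-\lceil (n-2)/2\rceil=\lceil (n+1)/2\rceil$, as a short parity check (separating $n$ even from $n$ odd) confirms.

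Since both halves reduce to already-established facts, there is essentially no obstacle here; the statement is genuinely a corollary. The only points meriting a moment's care are confirming that the girth hypothesis of Corollary~\ref{cor:SLD2domination} is met by paths (it is, vacuously, since a path contains no cycle) and, if one prefers explicit verification over the uniform formula, checking the smallest cases $n=2,3$ by hand against $\lceil (n+1)/2\rceil$.
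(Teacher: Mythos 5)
Your proposal is correct and takes essentially the same route as the paper: both deduce $\gamma^{SLD}(P_n)=\gamma_2(P_n)$ from Corollary~\ref{cor:SLD2domination}, since a path is acyclic and hence has girth at least $5$, and then quote the known value $\gamma_2(P_n)=\left\lceil \frac{n+1}{2}\right\rceil$ from~\cite{MoKel12}. Your optional self-contained derivation of $\gamma_2(P_n)$ (forcing both endpoints into $S$, observing that the non-codewords form an independent subset of the interior, and maximising that independent set) is also correct; it simply supplies a proof of the cited fact that the paper leaves to the reference.
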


We now study the behaviour of the solid-location-domination number in trees, and we prove that it agrees with the independence number. We will need the following notation. A vertex in a tree $T$ is a \textit{leaf} if it is of degree one and a vertex is a \emph{support vertex} if there is at least one leaf in its neighbourhood. If $u$ is a support vertex, then $L_u$ will denote the set of leaves attached to it. In the following lemma we recall a result from \cite{BliChe05}.
\begin{lemma}[Lemma~3 of \cite{BliChe05}] \label{lem:independence_trees}
Let $T$ be a tree and let $u$ be a support vertex in $T$ such that $\vert N(u)\setminus L_u\vert =1$. If $T'=T-(L_u\cup \{u\})$, then $\beta(T')=\beta(T)-\vert L_u\vert$.
\end{lemma}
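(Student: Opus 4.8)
The plan is to prove the two inequalities $\beta(T) \ge \beta(T') + |L_u|$ and $\beta(T) \le \beta(T') + |L_u|$ separately, writing $\ell = |L_u|$ throughout. I will use that the hypothesis $|N(u)\setminus L_u| = 1$ gives $u$ a unique non-leaf neighbour, so that deleting $u$ together with its leaves leaves the remaining graph connected; hence $T'$ is again a tree, which is what makes $\beta(T')$ fit into the tree setting of the section. I will also repeatedly use that $V(T)$ is partitioned as $V(T') \cup \{u\} \cup L_u$, and that each leaf of $L_u$ has $u$ as its only neighbour in $T$.

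For the lower bound $\beta(T) \ge \beta(T') + \ell$, I would start from a maximum independent set $S'$ of $T'$ and simply adjoin all the leaves, forming $S = S' \cup L_u$. Since the only neighbour of any vertex of $L_u$ is $u$, and $u$ has been deleted from $T'$, no leaf is adjacent to a vertex of $S'$; the leaves are also pairwise non-adjacent. Thus $S$ is independent in $T$ and $|S| = \beta(T') + \ell$, which yields the inequality.

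For the upper bound $\beta(T) \le \beta(T') + \ell$, I would take a maximum independent set $S$ of $T$ and split it along the partition above. The part $S \cap V(T')$ is independent in the induced subgraph $T'$, so $|S \cap V(T')| \le \beta(T')$. For the part inside $\{u\} \cup L_u$, the key point is that every leaf of $L_u$ is adjacent to $u$, so an independent set cannot contain $u$ together with any leaf: either $u \in S$, and then $S$ meets $\{u\} \cup L_u$ in the single vertex $u$, or $u \notin S$, and then $S$ meets this set in at most the $\ell$ leaves. As $u$ is a support vertex we have $\ell \ge 1$, so in both cases $|S \cap (\{u\} \cup L_u)| \le \ell$. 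Adding the two estimates gives $\beta(T) = |S| \le \beta(T') + \ell$.

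Combining the two inequalities yields $\beta(T) = \beta(T') + |L_u|$, i.e.\ $\beta(T') = \beta(T) - |L_u|$. I expect no real obstacle here: both constructions are elementary, and the only point requiring a moment's care is the mutual exclusion of $u$ and the leaves in the upper bound. It is worth remarking that the equality of independence numbers in fact holds for \emph{any} support vertex; the hypothesis $|N(u)\setminus L_u| = 1$ is what guarantees that $T'$ stays connected, which is exactly what allows the lemma to feed into the inductive arguments on trees in this section.
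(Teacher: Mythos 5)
Your proof is correct, but there is nothing in the paper to compare it against: the paper does not prove this lemma, it imports it verbatim as Lemma~3 of \cite{BliChe05}. So you have supplied a proof where the paper has only a citation. Your argument is the natural self-contained one and both halves are sound: for the lower bound, adjoining $L_u$ to a maximum independent set of $T'$ works because every vertex of $L_u$ has $u$ as its unique neighbour in $T$ and $u \notin V(T')$; for the upper bound, splitting a maximum independent set $S$ of $T$ along the partition $V(T) = V(T') \cup \{u\} \cup L_u$ works because $S \cap V(T')$ is independent in the induced subgraph $T'$, and adjacency of $u$ to all of $L_u$ together with $\vert L_u\vert \ge 1$ gives $\vert S \cap (\{u\} \cup L_u)\vert \le \vert L_u\vert$ in both the case $u \in S$ and the case $u \notin S$. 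Your closing remark is also accurate and worth keeping: the equality of independence numbers holds for an arbitrary support vertex, and the hypothesis $\vert N(u)\setminus L_u\vert = 1$ is needed only so that $T'$ remains connected, hence a tree, which is exactly what the inductive arguments of this section (Lemma~\ref{lem:DLD_trees} and Proposition~\ref{pro:DLDtrees}) rely on.
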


A similar result can be proved for the solid-location-domination number, as we show in the following lemma.
\begin{lemma}\label{lem:DLD_trees}
Let $T$ be a tree and let $u$ be a support vertex in $T$ such that $\vert N(u)\setminus L_u\vert =1$. If $T'=T-(L_u\cup \{u\})$, then $\gamma^{DLD}(T')=\gamma ^{DLD}(T)-\vert L_u\vert$.
\end{lemma}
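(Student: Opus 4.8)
The plan is to prove the recurrence $\DLD(T') = \DLD(T) - |L_u|$ by establishing the two inequalities separately, mirroring the structure of the independence-number result in Lemma~\ref{lem:independence_trees}. Throughout, let $w$ be the unique neighbour of $u$ outside $L_u$ (so $N(u)\setminus L_u = \{w\}$), and let $\ell = |L_u|$. The key structural observation is that the leaves in $L_u$ are false twins of one another (each has $N = \{u\}$), so any solid-locating-dominating code must behave rigidly near them: among the $\ell + 1$ vertices $L_u \cup \{u\}$, a solid-locating-dominating code of $T$ is essentially forced to contain all but one of them, contributing exactly $\ell$ codewords locally.

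First I would prove $\DLD(T) \le \DLD(T') + \ell$. Take an optimal solid-locating-dominating code $C'$ of $T'$ and extend it to $T$ by adding all $\ell$ leaves in $L_u$, i.e.\ set $C = C' \cup L_u$. I must check $C$ is solid-locating-dominating in $T$. The only non-codewords added to $T$ is $u$ itself, and every non-codeword of $T'$ that was fine before remains fine (its neighbourhood inside $T'$ is unchanged, and adding leaf codewords cannot destroy the $I(x)\not\subseteq I(y)$ condition since the new codewords are adjacent only to $u$). For the new non-codeword $u$, I would verify $I(C;u)\ne\emptyset$ (it contains $L_u$) and that $I(C;u)\not\subseteq I(C;x)$ for every other non-codeword $x$: since $x \ne u$ is not adjacent to any leaf of $L_u$ (the leaves have $u$ as their only neighbour), no leaf lies in $I(C;x)$, so $I(C;u)\setminus I(C;x) \supseteq L_u \ne \emptyset$. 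Conversely $I(C;x)\not\subseteq I(C;u)$ needs the inherited property of $C'$, which I would transfer carefully, noting that $u$'s presence as a non-codeword does not interfere because $u \in N(w)$ only and $x$'s distinguishing codeword in $C'$ survives. This gives a valid code of size $\DLD(T') + \ell$.

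Next I would prove the reverse inequality $\DLD(T') \le \DLD(T) - \ell$, which I expect to be the main obstacle. Starting from an optimal solid-locating-dominating code $C$ of $T$, the first task is to show one may assume $L_u \subseteq C$ and $u \notin C$ (or reach such a configuration without increasing $|C|$). If two leaves of $L_u$ were both non-codewords they would have equal $I$-sets, violating the code property, so at most one leaf is a non-codeword; a short exchange argument (swapping a non-codeword leaf for $u$, or absorbing $u$ into the code) lets me standardise to $L_u \subseteq C$, $u \notin C$, contributing exactly $\ell$ codewords that lie in $L_u \cup \{u\}$. Then I restrict $C$ to $T'$ by setting $C_{T'} = C \setminus (L_u \cup \{u\})$, so $|C_{T'}| = |C| - \ell$, and I must verify $C_{T'}$ is solid-locating-dominating in $T'$. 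The delicate point is that in $T$ the vertex $w$ was partly ``served'' by $u$ or by the leaves through $u$, so removing them could in principle break domination or the non-inclusion condition for $w$ in $T'$. Here I would use that $u \notin C$ forces $I(C;u) = L_u \cup (\{w\}\cap C)$ together with the solid condition at $u$ to pin down how $w$ relates to the rest of the code; the intersection-characterisation in Theorem~\ref{ThmCharacterizationSLDDLD}(ii) is the natural tool to show that $w$ remains correctly located in $T'$ using only codewords of $C_{T'}$, since the forced leaf-codewords cannot have been essential for distinguishing pairs within $T'$. Combining the two inequalities yields the claimed equality.
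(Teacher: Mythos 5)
There is a genuine gap, and the same overlooked configuration breaks both halves of your argument: the situation where $w$ is a codeword, some non-codeword has $I$-set exactly $\{w\}$, and $u$ is a non-codeword. Then $I(u)=L_u\cup\{w\}$ contains that $I$-set, violating the solid condition. Concretely, for your first inequality you always extend by $C=C'\cup L_u$, but take $T=P_4$ with vertices $x_0-u-w-y$, so $L_u=\{x_0\}$ and $T'$ is the edge $wy$. The code $C'=\{w\}$ is an optimal solid-locating-dominating code of $T'$, yet $C=\{w,x_0\}$ is not solid-locating-dominating in $T$: $I(C;y)=\{w\}\subseteq\{w,x_0\}=I(C;u)$. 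The ``inherited property of $C'$'' cannot repair this, since it only constrains pairs of non-codewords of $T'$ and says nothing about containment in the $I$-set of the \emph{new} non-codeword $u$. The paper's proof avoids exactly this by a case distinction: if $w\notin C'$ it takes $C'\cup L_u$, but if $w\in C'$ it takes $C'\cup(L_u\setminus\{x\})\cup\{u\}$ for one leaf $x$, i.e.\ it puts $u$ \emph{into} the code, so that the lone non-codeword leaf has $I$-set $\{u\}$, which is incomparable with every other $I$-set.

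Your reverse direction fails on the same example. When $u$ and $w$ are both codewords (so exactly one leaf of $L_u$ is a non-codeword), your proposed swap of that leaf for $u$ is not sound: $C=\{u,w\}$ is an optimal solid-locating-dominating code of $P_4$ (with $I(x_0)=\{u\}$, $I(y)=\{w\}$), but the swap yields $\{x_0,w\}$, where again $I(y)=\{w\}\subseteq\{x_0,w\}=I(u)$. In fact the only optimal code of $P_4$ in your ``standard form'' is $\{x_0,y\}$, which no local exchange starting from $\{u,w\}$ can reach, so the standardization claim itself is far from a short exchange argument. The paper never insists on $u\notin C$: it proves that in every case $\vert C\cap(L_u\cup\{u\})\vert=\vert L_u\vert$, performing a swap only in the harmless case $u\in C$, $w\notin C$ (harmless because then no $I$-set can contain $w$), and then shows directly that $C\setminus(L_u\cup\{u\})$ is solid-locating-dominating in $T'$ --- which works even when $u,w\in C$ precisely because $u$ is allowed to remain a codeword. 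So while your overall two-inequality plan and the counting $\vert C\cap(L_u\cup\{u\})\vert=\vert L_u\vert$ agree with the paper, both constructions you rely on already fail on $P_4$, and repairing them leads exactly to the paper's case distinctions on whether $w$ is a codeword.
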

\begin{proof}
Denote by $v$ the unique non-leaf neighbour of $u$ and let $C$ be an optimal solid-locating-dominating code in $T$. If $u\notin C$, then clearly $L_u\subseteq C$, to keep the domination. If $u,v\in C$ then, by minimality of $C$, there exists exactly one vertex in $L_u\setminus C$. And if $u\in C$ and $v\notin C$, then $L_u\subseteq C$, by definition of solid-locating-dominating code and, in this case, we define $C^*=(C\setminus \{u\})\cup \{v\}$, which can be straightforwardly shown to be an optimal solid-locating-dominating code in $T$ with $\vert C^*\vert =\vert C\vert$.

In all the cases, we have an optimal solid-locating-dominating code $C$ in $T$ such that $\vert C\cap (L_u\cup \{u\})\vert =\vert L_u\vert$. Note that in all cases $C'=C\setminus (L_u\cup \{u\})$ is a solid-locating-dominating code of $T'=T \setminus (L_u\cup \{u\})$. Hence, we have $$\gamma^{DLD}(T') \leq \vert C'\vert=\vert C\vert -\vert L_u\vert=\gamma ^{DLD}(T)-\vert L_u\vert.$$

Suppose that $\gamma^{DLD}(T')<\gamma ^{DLD}(T)-\vert L_u\vert$ and let $C''$ be a solid-locating-dominating code in $T'$ with $\vert C''\vert = \gamma ^{DLD}(T') <\gamma ^{DLD}(T)-\vert L_u\vert$. If $v\notin C''$, define $D=C''\cup L_u$. If $v\in C''$, pick a leaf $x\in L_u$ and define $D=C''\cup (L_u\setminus \{x\})\cup \{u\}$. In both cases, we obtain a solid-locating-dominating code $D$ of $T$ that satisfies $\vert D\vert =\vert C'' \vert +\vert L_u\vert <(\gamma ^{DLD}(T)-\vert L_u\vert) +\vert L_u\vert =\gamma ^{DLD}(T)$, which is a contradiction.
\end{proof}

We can now prove the following result that gives the desired equality between the solid-location-domi\-nation number and the independence number in trees.
\begin{proposition}\label{pro:DLDtrees}
Let $T$ be a tree. Then $\gamma ^{DLD}(T)=\beta (T)$.
\end{proposition}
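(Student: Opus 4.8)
The plan is to prove the equality $\gamma^{DLD}(T)=\beta(T)$ by induction on the number of vertices of the tree $T$, peeling off leaves one support vertex at a time so that Lemmas~\ref{lem:independence_trees} and \ref{lem:DLD_trees} can be applied in tandem. The two lemmas have exactly matching forms: under the hypothesis that $u$ is a support vertex with a single non-leaf neighbour, removing $L_u\cup\{u\}$ decreases \emph{both} $\beta$ and $\gamma^{DLD}$ by precisely $|L_u|$. So if the reduced tree $T'=T-(L_u\cup\{u\})$ satisfies the equality by the inductive hypothesis, then
\[
\gamma^{DLD}(T)=\gamma^{DLD}(T')+|L_u|=\beta(T')+|L_u|=\beta(T),
\]
and we are done. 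The whole argument thus reduces to two points: setting up the base case, and guaranteeing that a support vertex of the required special type always exists so the inductive step can be carried out.

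First I would dispose of the base cases. Induction bottoms out at the smallest trees; for $n=1$ the single vertex forces $\gamma^{DLD}=\beta=1$, and for $n=2$ (the edge $P_2$) one checks directly that both parameters equal $1$. For the inductive step, assume the result holds for all trees on fewer vertices and let $T$ have $n\ge 3$ vertices. The key structural fact I would invoke is that any tree on at least two vertices has a support vertex $u$ such that all but at most one of its neighbours are leaves, i.e.\ $|N(u)\setminus L_u|\le 1$; this is the standard ``take a longest path and look at its penultimate vertex'' observation. The penultimate vertex $u$ on a longest path has exactly one non-leaf neighbour (its neighbour further along the path toward the interior), and every other neighbour must be a leaf, since otherwise the path could be extended, contradicting maximality. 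Thus $|N(u)\setminus L_u|=1$, exactly the hypothesis demanded by both lemmas.

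With such a support vertex $u$ in hand, I would form $T'=T-(L_u\cup\{u\})$, which is again a tree (removing a connected ``broom'' hanging off a single edge keeps the graph connected and acyclic) on strictly fewer vertices, so the inductive hypothesis gives $\gamma^{DLD}(T')=\beta(T')$. Combining this with Lemma~\ref{lem:independence_trees} and Lemma~\ref{lem:DLD_trees} yields the displayed chain of equalities above, completing the induction.

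The main obstacle is essentially bookkeeping rather than a deep difficulty: one must be careful that $T'$ is genuinely a valid tree to which induction applies and that it is nonempty. Since $u$ has a non-leaf neighbour $v$ that survives the deletion, $T'$ contains at least the vertex $v$ and is nonempty; and because the deleted set $L_u\cup\{u\}$ is attached to the rest of $T$ only through the single edge $uv$, the remaining graph stays connected and acyclic. The one genuine subtlety worth stating explicitly is the existence of a support vertex with exactly one non-leaf neighbour, which is why the longest-path argument is the right tool; once that is secured, both lemmas fit together seamlessly and the proof closes with no further computation.
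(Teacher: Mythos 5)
Your induction is essentially the paper's (same two lemmas, same deletion, same chain of equalities), but there is a genuine hole in the inductive step: stars. Your claim that the penultimate vertex $u$ of a longest path satisfies $\vert N(u)\setminus L_u\vert =1$ fails exactly when $T$ is a star $K_{1,n-1}$ with $n\geq 3$: a longest path then has only three vertices, its penultimate vertex is the centre, and \emph{all} of its neighbours are leaves, so $\vert N(u)\setminus L_u\vert =0$. In that case no vertex of $T$ satisfies the hypothesis of Lemmas~\ref{lem:independence_trees} and~\ref{lem:DLD_trees} (whose statements and proofs need the unique non-leaf neighbour $v$), and your nonemptiness argument for $T'$ collapses as well, since $T'=T-(L_u\cup \{u\})$ would be empty. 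The inductive step must cover every tree on $n$ vertices, and the omission is not harmless even for non-stars: other trees (e.g.\ a ``broom'', a path of length two attached to a vertex with several pendant leaves) reduce under your deletion to a star, so the inductive hypothesis genuinely needs the star case to have been established.

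The patch is exactly what the paper does: dispose of stars separately by a direct check before starting the induction. For $T=K_{1,n-1}$ with centre $c$, taking all $n-1$ leaves as the code gives a solid-locating-dominating code (the centre is the unique non-codeword), while any code missing two leaves $l,l'$ has $I(l)\subseteq I(l')$ (or an empty $I$-set if $c$ is also missing); hence $\gamma^{DLD}(T)=n-1=\beta(T)$. Once stars are excluded, ``$T$ is not a star'' guarantees diameter at least $3$, i.e.\ a longest path on at least four vertices, your longest-path argument is then correct, and the remainder of your proof (base cases, application of the two lemmas, the displayed equalities) matches the paper's proof step for step.
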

\begin{proof}
If $T=K_{1,n-1}$ is a star with $n$ vertices, then it is clear that $\gamma ^{DLD}(T)=\beta (T)=n-1$. Assume now that $T$ is not a star. We proceed by induction on $n=\vert V(T)\vert$. The result is trivially true if $n=1$ or $n=2$. Let $n\geq 3$ be an integer and assume that the statement is true for trees with at most $n-1$ vertices.

 Since $T$ is not a star, there exists a support vertex $u$ such that $\vert N(u)\setminus L_u\vert =1$. By Lemma~\ref{lem:DLD_trees}, the tree $T'=T-(L_u\cup \{u\})$ satisfies $\gamma^{DLD}(T')=\gamma ^{DLD}(T)-\vert L_u\vert$. The inductive hypothesis gives that $\gamma ^{DLD}(T')=\beta (T')$ and, by Lemma~\ref{lem:independence_trees}, we know that $\beta(T')=\beta(T)-\vert L_u\vert$. Therefore $\gamma ^{DLD}(T)=\gamma^{DLD}(T')+\vert L_u\vert=\beta(T')+\vert L_u\vert=\beta(T)$, as desired.
\end{proof}

In the particular case of paths, independence number is known (\cite[Lemma 4]{PathInd}), and therefore:
\begin{corollary}\label{cor:PathDLD} We have $$\gamma^{DLD}(P_n)=\beta (P_n)=\left\lceil \frac{n}{2}\right\rceil.$$
\end{corollary}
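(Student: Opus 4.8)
The plan is to obtain this statement as a direct corollary of Proposition~\ref{pro:DLDtrees} together with the known independence number of a path. Since every path $P_n$ is a tree, Proposition~\ref{pro:DLDtrees} applies verbatim and immediately gives the first equality $\gamma^{DLD}(P_n)=\beta(P_n)$. Thus no new structural work on solid-locating-dominating codes is needed here; the entire burden of the corollary has already been discharged by the preceding proposition.

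It then remains only to identify the independence number of a path, which is the classical value $\beta(P_n)=\left\lceil \frac{n}{2}\right\rceil$ recorded in \cite[Lemma~4]{PathInd}. If one wished to verify this in passing rather than merely cite it, the lower bound follows by exhibiting the explicit independent set $\{v_1,v_3,v_5,\dots\}$ consisting of every second vertex along $P_n=v_1v_2\cdots v_n$, which has exactly $\left\lceil \frac{n}{2}\right\rceil$ elements, while the matching upper bound comes from a short pairing argument: grouping consecutive vertices into pairs $\{v_1,v_2\},\{v_3,v_4\},\dots$ shows that an independent set can contain at most one vertex from each pair. Combining the two equalities yields $\gamma^{DLD}(P_n)=\beta(P_n)=\left\lceil \frac{n}{2}\right\rceil$, as claimed.

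I do not anticipate any genuine obstacle in this proof, since all the substantive content resides in Proposition~\ref{pro:DLDtrees} and the cited formula for $\beta(P_n)$; the corollary is obtained purely by specialization and substitution. The only point meriting a moment's care is the consistency check that $P_n$ is indeed covered by the hypotheses of Proposition~\ref{pro:DLDtrees} (it is, as that proposition is stated for arbitrary trees, and the degenerate cases $n=1,2$ are handled there as well), after which the chain of equalities is immediate.
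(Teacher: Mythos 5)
Your proposal is correct and follows exactly the paper's route: the corollary is obtained by specializing Proposition~\ref{pro:DLDtrees} to paths and citing $\beta(P_n)=\left\lceil n/2\right\rceil$ from \cite[Lemma~4]{PathInd}. The brief self-contained verification of the independence number you include is a harmless addition but does not change the argument.
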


In the above proposition, we have shown that $\gamma ^{DLD}(T)=\beta (T)$. Previously, the independence number has been extensively studied, and due to~\cite{BliCheFaMe07}, it is known that $\beta(G) \geq (n + \ell(G) - s(G))/2$. This lower bound immediately gives the following corollary. Observe that the lower bound can be attained by any path with even number of vertices (by the previous corollary). Moreover, this bound has been studied together with location-domination number, independence number and $2$-domination number in \cite{BliFaLo08}.
\begin{corollary}
Let $T$ be tree of order $n$, with $\ell(T)$ leaves and $s(T)$ support vertices. Then $\gamma^{DLD}(T)\geq (n + \ell(T) - s(T))/2$.
\end{corollary}

We have proved that in every tree, self-locating-dominating codes are exactly $2$-dominating sets and this gives the equality between associated parameters as shown in Corollary~\ref{cor:SLD2domination}. However, the equality between solid-location-domination number and independence number in trees as shown in Proposition~\ref{pro:DLDtrees}, does not imply any general relationship between minimum solid-locating-dominating codes and maximum independent sets.

The path with six vertices satisfies $\gamma ^{DLD}(P_6)=\beta (P_6)=3$. In Figure~\ref{fig:P6_ind}, we show all maximum independent sets in $P_6$ (squared vertices). In Figure~\ref{fig:P6_DLD}, we show all minimum solid-locating-dominating codes (squared vertices) in the same graph. In this case, none of the maximum independent sets is solid-locating-dominating and none of the minimum solid-locating-dominating codes is independent. However, occasionally the optimal solid-locating-dominating may also be an independent set like in the case of $P_3$ with the middle vertex as the non-codeword.

\begin{figure}[ht]
    \centering
    \begin{subfigure}{4.2cm}
    \includegraphics[width=1.150\textwidth]{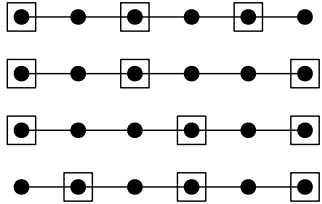}\vspace*{0.4cm}
    \caption{Maximum independent sets in $P_6$ are {denoted by the squared vertices}.}\label{fig:P6_ind}
    \end{subfigure}
    \hspace{3cm}
    \begin{subfigure}{4.2cm}
    \includegraphics[width=1.0\textwidth]{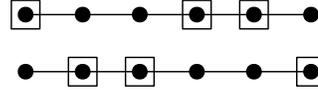}
    \caption{Minimum {solid-locating-dominating codes in $P_6$ are denoted by the squared vertices}.}\label{fig:P6_DLD}
    \end{subfigure}
    \caption{Maximum independent sets and minimum solid-locating-dominating codes are different.}%
    \label{fig:P6}%
\end{figure}


\section{Cartesian products and ladders} \label{SectionCartesian}

In this section, we consider self-location-domination and solid-location-domination in the \textit{Cartesian product of graphs}. The Cartesian product of graphs $G=(V(G),E(G))$ and $H=(V(H),E(H))$ is $G\square H=(V(G)\times V(H),E)$ where $(u,v)(u',v')\in E$ if and only if $u=u'$ and $vv'\in E(H)$ or $uu'\in E(G)$ and $v=v'$. We begin by presenting a theorem which gives lower and upper bounds for the self-location-domination and the solid-location-domination numbers for Cartesian products. Then we proceed by studying these numbers more closely in the Cartesian products $P_n \Box P_2$, where $P_k$ denotes a path with $k$ vertices. Using these results concerning $P_n \Box P_2$ and some other previously known ones for the Cartesian product of two complete graphs (see~\cite{JLLrntcld}), we are able to show that most of the obtained lower and upper bounds can be attained.

\begin{theorem} \label{ThmCartesianProduct}
We have
\begin{itemize}
\item[(i)] $\max \{ \SLD(G),\SLD(H)\}\le \SLD(G\Box H) \le \min\{|V(H)|\SLD(G),|V(G)|\SLD(H)\}$ and
\item[(ii)] $\max \{ \DLD(G),\DLD(H)\}\le \DLD(G\Box H) \le \min\{|V(H)|\DLD(G),|V(G)|\DLD(H)\}$.
\end{itemize}
\end{theorem}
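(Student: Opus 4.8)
The plan is to prove the two minima (the upper bounds) by an explicit product construction and the two maxima (the lower bounds) by a projection argument; since $G\Box H\cong H\Box G$, in each case it suffices to establish the statement involving $G$ and then exchange the roles of the factors. Throughout I would rely on the splitting of the closed neighbourhood in the product,
$N[(g,h)] = (N_G[g]\times\{h\}) \cup (\{g\}\times N_H[h])$,
so that for a code $C$ and a non-codeword $(g,h)$ the identifying set $I(C;(g,h))$ breaks into a \emph{horizontal} part living in the $G$-layer through $h$ and a \emph{vertical} part living in the $H$-fibre through $g$.

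For the upper bounds, let $C_G$ be an optimal self- (resp.\ solid-) locating-dominating code in $G$ and set $C = C_G\times V(H)$, a code with $|V(H)|\,\SLD(G)$ (resp.\ $|V(H)|\,\DLD(G)$) codewords. If $(g,h)\notin C$ then $g\notin C_G$, so the vertical part is empty and $I(C;(g,h)) = I(G,C_G;g)\times\{h\}$. Non-emptiness is then immediate from the domination of $C_G$. In the solid case the separation condition for distinct $(g_1,h_1),(g_2,h_2)$ reduces either to $h_1\neq h_2$ (disjoint second coordinates, so the supports are disjoint) or, when $h_1=h_2$, to the solid condition $I(G,C_G;g_1)\not\subseteq I(G,C_G;g_2)$ in $G$. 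In the self case I would compute $\bigcap_{c\in I(C;(g,h))}N[c]$ and show it collapses to $\{(g,h)\}$: the in-layer contribution gives $\bigcap_{g'\in I(G,C_G;g)}N_G[g']=\{g\}$, and the off-layer candidates are eliminated using that $|I(G,C_G;g)|\ge 2$ for any self-locating-dominating code (a singleton $I$-set $\{c\}$ would force $N[c]=\{u\}$, which is impossible).

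For the lower bounds, let $C$ be an optimal code in $G\Box H$ and consider its projection $\pi_G(C)=\{g : (g,h)\in C \text{ for some } h\}$, so $|\pi_G(C)|\le |C|$. The decisive observation is that if $g\notin\pi_G(C)$, then no vertex of the fibre $\{g\}\times V(H)$ is a codeword, whence the vertical part vanishes for every $h$ and $I(C;(g,h)) = I(G,C_h;g)\times\{h\}$, where $C_h=\{g':(g',h)\in C\}$ is the trace of $C$ on the layer through $h$. Since $C_h\subseteq\pi_G(C)$, a monotonicity argument transports the code conditions from the product down to $G$: in the solid case, fixing any layer $h$, a witness $a\in I(G,C_h;g_1)\setminus I(G,C_h;g_2)$ satisfies $a\in\pi_G(C)$, $a\in N_G(g_1)$ and $a\notin N_G(g_2)$, hence it still separates $g_1$ from $g_2$ relative to $\pi_G(C)$; in the self case, enlarging the index set from $I(G,C_h;g)$ to $I(G,\pi_G(C);g)$ can only shrink $\bigcap N_G[\cdot]$, which is already $\{g\}$ and always contains $g$, so it stays $\{g\}$. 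This makes $\pi_G(C)$ a self- (resp.\ solid-) locating-dominating code in $G$, giving $\SLD(G)\le|\pi_G(C)|\le|C|=\SLD(G\Box H)$ (resp.\ the $\DLD$ analogue).

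The main obstacle is the interaction between the two factors in the lower bound: in general $I(C;(g,h))$ mixes horizontal and vertical codewords, and a naive projection need not even be a code. What rescues the argument is precisely that for vertices projecting outside $\pi_G(C)$ the vertical contribution disappears, confining the relevant $I$-sets to a single $G$-layer; passing from the trace $C_h$ to the full projection $\pi_G(C)$ then only helps, since a separating codeword $a$ with $a\notin N_G(g_2)$ remains a separator and enlarging the index set of a closed-neighbourhood intersection can only contract it toward $\{g\}$. The upper bound, by contrast, is routine once the product construction $C_G\times V(H)$ is written down.
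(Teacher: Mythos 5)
Your proof is correct and follows essentially the same route as the paper's: the upper bounds via the product code $C_G\times V(H)$ with the $I$-sets confined to a single layer, and the lower bounds via projection onto the first coordinate, using exactly the same layer-confinement and monotonicity observations (your explicit remark that $|I(G,C_G;g)|\ge 2$ rules out off-layer points is the paper's ``two codewords in a common layer'' observation in disguise). No gaps worth noting.
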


\begin{proof} (i) Let us first show the upper bound on $\SLD(G\Box H).$ Without loss of generality, we may assume that $|V(H)|\SLD(G)\le |V(G)|\SLD(H)$. Let $C$ be a self-locating-dominating code in $G$ attaining $\SLD(G).$ Denote $D=\{(c,v)\mid c\in C, v\in V(H)\}$. Clearly, $|D|=|V(H)|\SLD(G).$ We will show that $D$ is self-locating-dominating in $G\Box H.$ We denote, for any $h\in V(H)$, the set $\{(u,h)\mid u\in V(G)\}$ by $L_h$ and we call it a \emph{layer}. Observe that for any vertices $(u_1, h)$ and $(u_2,h)$ ($u_1 \neq u_2$) in the same layer $L_h$ we have $N[(u_1, h)] \cap N[(u_2, h)] \subseteq L_h$. Let $x$ be any non-codeword in $G\Box H$, say $x=(u,h)\in L_h$ for some $h$ and  $u\in V(G)\setminus C$. Now the codewords in $I(G\Box H;x)$ all belong to $L_h$. Since $C$ is self-locating-dominating in $G$, we get (by the previous observation) that
$$\bigcap_{c\in I(G\Box H;x)}N[c]=\{x\}.$$

Next we consider the lower bound on $\SLD(G\Box H)$. Without loss of generality, say $\SLD(G)\ge \SLD(H).$ Let $D$ be a self-locating-dominating code in $G\Box H$ of cardinality $\SLD(G\Box H).$ Denote by $C (\subseteq V(G))$ the set which is obtained by collecting all the first coordinates from $D$. We claim that $C$ is self-locating-dominating in $G$. Let $u\in V(G)$ be a non-codeword with respect to $C.$ This implies that the vertices $(u,h)$ are non-codewords with respect to $D$ for all $h\in V(H)$ and hence, $I((u,h)) \subseteq L_h$. Since $D$ is self-locating-dominating, we know that for any layer $L_h$ the neighbourhoods of the codewords in $I(G\Box H,D;(u,h))$ intersect uniquely in $(u,h)$. Because the first coordinates of the codewords in $I(G\Box H,D;(u,h))$ belong to $I(G,C;u)$, we obtain
$$\bigcap_{c\in I(G,C;u)}N[c]=\{u\}.$$
Thus $C$ is self-locating-dominating and the claim follows by noticing that
$|C|\ge \SLD(G).$

\smallskip
(ii) We can again assume without loss of generality that  $|V(H)|\DLD(G)\le |V(G)|\DLD(H)$. Let $C$ be a solid-locating-dominating code in $G$ attaining $\DLD(G)$ and denote again  $D=\{(c,v)\mid c\in C, v\in V(H)\}$. In order to verify that $D$ is solid-locating-dominating in $G\Box H$, we show that $I(D;x)\setminus I(D;y)$ is non-empty for any distinct non-codewords $x,y\in V(G\Box H).$ Denote $x=(u,h)$ and $y=(v,h')$ for some $u,v\in V(G)$ and $h,h'\in V(H).$ If $h=h'$, then we are done, since $C$ is solid-locating-dominating. If $h\neq h'$, then the claim follows from the fact that $I(D;x)$ contains a codeword in the layer $L_h$ and $I(D;y)$ cannot contain that codeword (since $x$ and $y$ are non-codewords).

The proof of the lower bound is again similar ---  let $D$ be a solid-locating-dominating code in $G\Box H$ of cardinality $\DLD(G\Box H)$ and $C$ be a set obtained from its first coordinates. Now let $u\in V(G)$ and $v\in V(G)$ be non-codewords with respect to $C$. This implies that the vertices $(u,h)$ and $(v,h')$ are non-codewords with respect to $D$ for all $h,h'\in H.$ Since $D$ is solid-locating-dominating, we must have that $I(D;(u,h))\setminus I(D;(v,h))$ contains a codeword $(c,h)$ of $D$ in the layer $L_h$. Therefore, $c\in I(C;u)\setminus I(C;v)$ in $G$ and we are done.
\end{proof}

\begin{remark}
Due to Corollary~\ref{cor:maximalSLD}, for any complete graph $K_m$, we have $\SLD(K_m)=m$. Moreover, it has been shown in~\cite{JLLrntcld} that $\SLD(K_m\Box K_n)=m$ for $m\geq2n$. Therefore, the lower bound of Case~(i) of the previous theorem can be attained.
\end{remark}

In what follows, we focus on the self-location-domination and solid-location-domination numbers in the Cartesian product of paths $P_n$ and $P_2$. Using these results, we are able to show that the upper bounds in Cases~(i) and (ii) can be attained.

The Cartesian product of the paths $P_n$ and $P_2$ will be called the \emph{ladder (graph)} of length $n$. Furthermore, we use the following notation for the vertex sets of $P_n$ and $P_2$: $V(P_n)=\{ v_1,v_2,v_3,\dots v_n\}$ and $V(P_2)=\{1,2\}$, and so the vertex set of the Cartesian product $P_n\Box P_2$ is $V(P_n\Box P_2)=\{ (v_i,j)\colon 1\leq i\leq n,\ 1\leq j\leq 2\}$ (see Figure~\ref{fig:ladder}).
\begin{figure}[ht]
\begin{center}
\includegraphics[height=0.1\textheight]{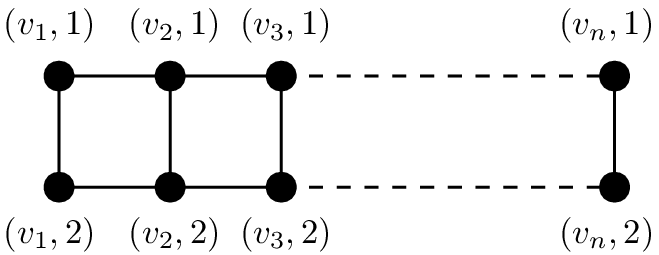}
\caption{The ladder $P_n\Box P_2$.}
\label{fig:ladder}
\end{center}
\end{figure}

The following notation will be useful in this section. Let $1\leq r\leq n$ be an integer. Now $P_r\Box P_2$ is the subgraph of $P_n\Box P_2$ induced by the vertex set $\{ (v_i,j)\colon 1\leq i\leq r,\ 1\leq j\leq 2\}$ (see Figure~\ref{fig:r_induced}), which is a ladder of length $r$. On the other hand, for an integer $0\leq s\leq n-1$, $P_{n-s}\Box P_2$ is the subgraph of $P_n\Box P_2$ induced by $\{ (v_i,j)\colon s+1\leq i\leq n,\ 1\leq j\leq 2\}$ (see Figure~\ref{fig:n-s_induced}), which is a ladder of length $n-s$.

\begin{figure}[ht]
    \centering
    \begin{subfigure}{4.5cm}
    \includegraphics[width=0.9\textwidth]{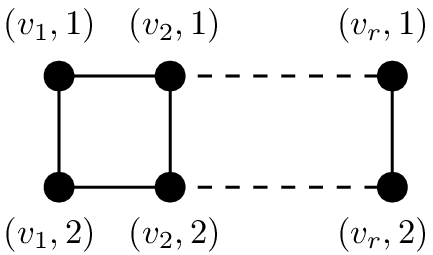}
    \caption{Subgraph $P_r\Box P_2$.}\label{fig:r_induced}
    \end{subfigure}
    \hspace{1.5cm}
    \begin{subfigure}{4.5cm}
    \includegraphics[width=1\textwidth]{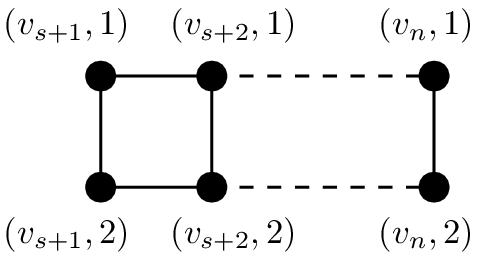}
    \caption{Subgraph $P_{n-s}\Box P_2$.}\label{fig:n-s_induced}
    \end{subfigure}
    \caption{Induced subgraphs in $P_n\Box P_2$.}%
    \label{fig:inducedsubgraphs}%
\end{figure}


We begin by computing the self-location-domination number of
ladders. To this end, we will use the relationship between
self-locating-dominating codes and $2$-dominating sets that we
showed in Lemma~\ref{lem:SLD2domination}. It is {known that}
{$\gamma_2(P_n\Box P_2)=n$} (see~\cite{MoKel12,ShaMaAl17}) for
$n\ge 2$ {and} {$\gamma_2(P_1\Box P_2)=2$}. In the next lemma,
we prove an additional property of optimal $2$-dominating sets that
will be useful to our purpose.
\begin{lemma}\label{lem:2dominating}
Let $n\geq 2$ be an integer and let $C\subseteq V(P_n\Box P_2)$ be a $2$-dominating set such that $\vert C\vert =n$. Then we have $\{ (v_1,1), (v_1,2)\}\nsubseteq C$.
\end{lemma}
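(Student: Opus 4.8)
The plan is to argue by contradiction: suppose $\{(v_1,1),(v_1,2)\}\subseteq C$ and derive that $|C|\geq n+1$, contradicting $|C|=n$. Throughout I index the \emph{columns} of the ladder by $i\in\{1,\dots,n\}$, where column $i$ consists of $(v_i,1)$ and $(v_i,2)$, and I set $c_i=|C\cap\{(v_i,1),(v_i,2)\}|\in\{0,1,2\}$. I call column $i$ \emph{full}, \emph{half}, or \emph{empty} according to whether $c_i$ equals $2$, $1$, or $0$. Since the only edges of the ladder run inside a column or between consecutive columns in the same row, the $2$-domination condition $|I(C;u)|\geq 2$ for $u\notin C$ translates into purely local conditions on neighbouring columns.

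First I would record the one structural fact that drives everything. If column $i$ is empty, then both $(v_i,1)$ and $(v_i,2)$ are non-codewords, and each needs two codeword neighbours. Since the only neighbours of $(v_i,1)$ besides its own (non-codeword) partner are $(v_{i-1},1)$ and $(v_{i+1},1)$, both of these must lie in $C$; applying the same to $(v_i,2)$ forces $(v_{i-1},2),(v_{i+1},2)\in C$. Hence an empty column forces both of its neighbouring columns to be \emph{full}, and in particular an empty column can occur only at a position $i$ with $2\leq i\leq n-1$ (the end columns $1$ and $n$ can never be empty). This is the key local obstruction.

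The main device is a running-excess potential. Define $\Phi_i=\bigl(\sum_{j=1}^{i}c_j\bigr)-i$ for $0\leq i\leq n$, so that $\Phi_0=0$, each step changes the potential by $c_i-1\in\{-1,0,+1\}$, and $\Phi_n=|C|-n$. Under the assumption $c_1=2$ we have $\Phi_1=1$, and the goal becomes to show $\Phi_n\geq 1$. I would prove by induction on $i$ the sharper statement that $\Phi_i\geq 1$ whenever column $i$ is not empty and $\Phi_i\geq 0$ whenever column $i$ is empty. The base case $i=1$ holds since column $1$ is full with $\Phi_1=1$. In the inductive step, a full column raises $\Phi$ by $1$ from a value that is at least $0$; a half column keeps $\Phi$ unchanged but, crucially, cannot follow an empty column (by the structural fact an empty column is always followed by a full one), so its predecessor is non-empty and contributes $\Phi_{i-1}\geq 1$; and an empty column lowers $\Phi$ by $1$ but is always preceded by a full column, again forcing $\Phi_{i-1}\geq 1$ and hence $\Phi_i\geq 0$. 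Since column $n$ cannot be empty, the induction yields $\Phi_n\geq 1$, i.e.\ $|C|=\sum_{j=1}^{n}c_j\geq n+1$, contradicting $|C|=n$.

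I expect the main obstacle to be the bookkeeping in the induction, specifically the single downward step at an empty column: naively the potential can momentarily drop to $0$ (for instance if column $2$ is empty), so the invariant must be stated with the two-case strength above, and one must invoke the structural fact in exactly the right place—each empty column is sandwiched between full columns—to guarantee the potential is immediately restored to at least $1$ by the mandatory full column that follows. Once that invariant is set up correctly, the contradiction $\Phi_n\geq 1>0$ is immediate.
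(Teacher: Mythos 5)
Your proof is correct, but it takes a genuinely different route from the paper's. The paper proves the lemma by induction on the length $n$ of the ladder: the base cases $n=2$ and $n=3$ are settled by enumerating all minimum $2$-dominating sets, and in the inductive step an empty column at some position $i$ (which must exist when $|C|=n$ and column $1$ is full, and which must satisfy $2\leq i\leq n-1$) is used to split the ladder into the two induced sub-ladders $P_{i-1}\Box P_2$ and $P_{n-i}\Box P_2$; the inductive hypothesis, combined with the value $\gamma_2(P_m\Box P_2)=m$ quoted from the literature, then forces at least $i$ codewords in the first piece and at least $(n-i)+1$ in the second, giving $|C|\geq n+1$. You rely on the same key structural fact --- an empty column is sandwiched between two full columns, so it cannot occur at either end --- but instead of recursing you run a single left-to-right scan with the excess potential $\Phi_i$ and a two-case invariant ($\Phi_i\geq 1$ at non-empty columns, $\Phi_i\geq 0$ at empty ones), which is set up correctly: the half-column step is justified because a half column cannot immediately follow an empty one, and the empty-column step because its predecessor must be full. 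This buys you a self-contained argument: no base-case enumeration, no appeal to the known $2$-domination number of ladders, and in passing the stronger quantitative statement that \emph{any} $2$-dominating set of $P_n\Box P_2$ whose first column is full has size at least $n+1$ (essentially the same scan, started from $\Phi_1\geq 0$, also re-proves $\gamma_2(P_n\Box P_2)\geq n$). What the paper's approach buys is brevity given the cited value of $\gamma_2$, and a split-at-an-empty-column template that is reused verbatim in the later results of that section, namely the computation of $\gamma^{SLD}(P_{2k}\Box P_2)$ and the lower bound for solid-locating-dominating codes in ladders. Either proof is a valid replacement for the other.
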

\begin{proof}
There are exactly two $2$-dominating sets in $P_2\Box P_2$ with two vertices (see Figure~\ref{fig:case2}) and exactly two $2$-dominating sets in $P_3\Box P_2$ with three vertices (see Figure~\ref{fig:case3}). Therefore, the statement is clearly true for $n=2$ and $n=3$.

\begin{figure}[ht]
    \begin{center}
    \begin{subfigure}{4.5cm}
    \includegraphics[width=1.0\textwidth]{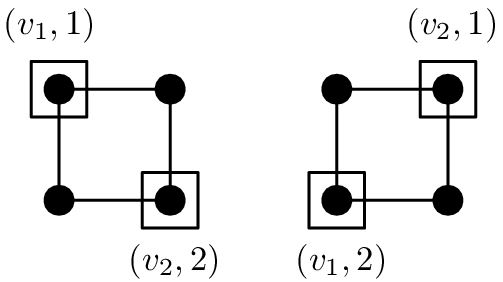}
    \caption{$2$-dominating sets with two vertices in $P_2\Box P_2$.}\label{fig:case2}
    \end{subfigure}
    \hspace{1cm}
    \begin{subfigure}{6.1cm}
    \includegraphics[width=1.15\textwidth]{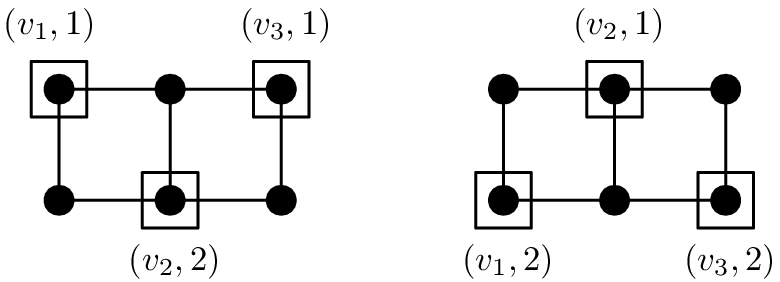}
    \caption{$2$-dominating sets with three vertices in $P_3\Box P_2$.}\label{fig:case3}
    \end{subfigure}
    \caption{Squared vertices are in the $2$-dominating set.}%
    \label{fig:cases2and3}%
    \end{center}
\end{figure}



We now proceed by induction on $n$. Assume the statement is true for $k<n$, $n\geq 4$, and let $C$ be a $2$-dominating set of $P_n\Box P_2$ such that $\vert C\vert =n$. Suppose to the contrary that $\{ (v_1,1), (v_1,2)\}\subseteq C$, then, because $C$ has $n$ elements, there exists $i\in \{2,\dots ,n\}$ such that $(v_i,1), (v_i,2)\notin C$. Note that  $i<n$, because $C$ is $2$-dominating. Now $\{(v_{i-1},1), (v_{i-1},2), (v_{i+1},1), (v_{i+1},2)\}\subseteq C$ to keep the $2$-domination.

Consider the induced subgraphs $G_1=P_{i-1}\Box P_2$ and $G_2=P_{n-i}\Box P_2$. It is clear that $C\cap V(G_1)$ and $C\cap V(G_2)$ are $2$-dominating sets in $G_1$ and $G_2$, respectively. Moreover, they satisfy $(v_{i-1},1),(v_{i-1},2)\in C\cap V(G_1)$ and $(v_{i+1},1),(v_{i+1},2)\in C\cap V(G_2)$.

Suppose that $i-1\geq 2$ and $n-i\geq 2$. By the inductive hypothesis $\vert C\cap V(G_1)\vert \geq (i-1)+1=i$ and $\vert C\cap V(G_2)\vert \geq (n-i)+1$. Therefore, $\vert C\vert\geq n+1$, which is a contradiction. Assume now that $i-1=1$ and $n-i=n-2$. In this case $\vert C\cap V(G_1)\vert =2$ and, by the inductive hypothesis, $\vert C\cap V(G_2)\vert \geq (n-2)+1=n-1$. Again $\vert C\vert\geq n+1$,  a contradiction. The remaining case, $i-1=n-2$ and $n-i=1$, is similar to the previous one. Therefore, $\{ (v_1,1), (v_1,2)\}\nsubseteq C$ as desired.
\end{proof}

This property gives that self-locating-dominating codes of ladders
$P_n\Box P_2$ are non-optimal $2$-dom\-i\-nat\-ing sets {for
$n\ge 2$}.
\begin{lemma}\label{lem:lowerbound}
Let $C$ be a self-locating-dominating code in $P_n\Box P_2$ with $n\ge 2$. Then $(v_1,1), (v_1,2),\linebreak (v_n,1), (v_n,2)\in C$ and $\vert C\vert \geq n+1$.
\end{lemma}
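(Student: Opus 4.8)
The plan is to prove the two assertions in turn: first that each of the four corner vertices belongs to every self-locating-dominating code, and then that $|C|\ge n+1$, the latter following almost immediately once the corner memberships are combined with the 2-domination results established earlier.

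For the corners, I would use the forced-codeword characterization of Theorem~\ref{ThmForcedCodewordsChar}. Take the corner $(v_1,1)$, whose open neighbourhood is $N((v_1,1))=\{(v_1,2),(v_2,1)\}$. The crucial observation is that this set is contained in the closed neighbourhood of the diagonally adjacent vertex $(v_2,2)$, since $\{(v_1,2),(v_2,1)\}\subseteq N[(v_2,2)]$ for every $n\ge 2$. Thus there is a vertex different from $(v_1,1)$ whose closed neighbourhood contains $N((v_1,1))$, so by Theorem~\ref{ThmForcedCodewordsChar} the vertex $(v_1,1)$ is a forced codeword and therefore lies in $C$. The remaining three corners are handled identically by pairing each with the appropriate interior vertex: $(v_1,2)$ against $(v_2,1)$, $(v_n,1)$ against $(v_{n-1},2)$, and $(v_n,2)$ against $(v_{n-1},1)$; each reduces to a one-line containment of neighbourhoods.

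For the cardinality bound I would argue by contradiction. By Lemma~\ref{lem:SLD2domination}(i) the code $C$ is a $2$-dominating set, so $|C|\ge\gamma_2(P_n\Box P_2)=n$. Assume $|C|=n$; then $C$ is an optimal $2$-dominating set, and Lemma~\ref{lem:2dominating} yields $\{(v_1,1),(v_1,2)\}\nsubseteq C$. This contradicts the first part of the argument, in which both $(v_1,1)$ and $(v_1,2)$ were shown to lie in $C$. Hence $|C|\ge n+1$, as claimed.

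The argument carries no serious obstacle; the one point meriting care is the uniform validity of the neighbourhood containments in the forced-codeword step when $n=2$, where the vertices $(v_2,\cdot)$ are themselves corners of the ladder. A direct check shows the inclusion $\{(v_1,2),(v_2,1)\}\subseteq N[(v_2,2)]$ never relies on the existence of $(v_3,\cdot)$, so the degenerate ladder $P_2\Box P_2$ needs no separate treatment, and the whole proof rests on citing Lemmas~\ref{lem:SLD2domination} and~\ref{lem:2dominating} together with the known value $\gamma_2(P_n\Box P_2)=n$.
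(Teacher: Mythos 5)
Your proof is correct and follows essentially the same route as the paper: establish that the four corner vertices lie in $C$, then combine $|C|\geq \gamma_2(P_n\Box P_2)=n$ (via Lemma~\ref{lem:SLD2domination}) with Lemma~\ref{lem:2dominating} to rule out $|C|=n$. The only cosmetic difference is that you force the corners by citing Theorem~\ref{ThmForcedCodewordsChar} through the containment $N((v_1,1))\subseteq N[(v_2,2)]$, whereas the paper re-derives the same contradiction directly from Definition~\ref{defSLD}, observing that $(v_2,2)\in \bigcap_{c\in I((v_1,1))}N[c]$ when $(v_1,1)\notin C$.
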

\begin{proof}
By Lemma~\ref{lem:SLD2domination}, $C$ is a $2$-dominating set in $P_n\Box P_2$. Hence, we have $\vert C\vert \geq n$. Suppose that $(v_1,1)\notin C$. Now $(v_1,2),(v_2,1)\in C$ and $(v_2,2)\in \bigcap_{c\in I((v_1,1))} N[c]$, which is not possible for a self-locating-dominating code. So $(v_1,1)\in C$ and analogously $(v_1,2),(v_n,1),(v_n,2)\in C$. Using Lemma~\ref{lem:2dominating}, we obtain that $\vert C\vert \geq n+1$.
\end{proof}

We can now compute the exact self-location-domination numbers of ladders.
\begin{theorem}
Let $n\geq 2$ be an integer. Then
$$\gamma^{SLD}(P_n\Box P_2)=
\left\{
  \begin{array}{ll}
    n+1 & \hbox{if $n$ is odd;} \\
    n+2 & \hbox{if $n$ is even.}
  \end{array}
\right.
$$
\end{theorem}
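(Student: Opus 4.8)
The plan is to reduce the whole problem to an independent-set condition on each of the two rows and then count. Throughout I write the vertices as $(v_i,r)$ with $1\le i\le n$ and $r\in\{1,2\}$; the only neighbours of $(v_i,r)$ are the horizontal neighbours $(v_{i-1},r),(v_{i+1},r)$ (when they exist) and the vertical neighbour $(v_i,3-r)$.

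The key step is a local observation: if $C$ is self-locating-dominating and $(v_i,r)\notin C$ with $2\le i\le n-1$, then both horizontal neighbours $(v_{i-1},r)$ and $(v_{i+1},r)$ lie in $C$. To see this, note $I((v_i,r))\subseteq\{(v_{i-1},r),(v_{i+1},r),(v_i,3-r)\}$; if, say, $(v_{i-1},r)\notin C$, then either $I((v_i,r))$ is a single codeword (whose closed neighbourhood has more than one vertex) or $I((v_i,r))\subseteq\{(v_{i+1},r),(v_i,3-r)\}$, where $N[(v_{i+1},r)]\cap N[(v_i,3-r)]=\{(v_i,r),(v_{i+1},3-r)\}$. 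In every case $\bigcap_{c\in I((v_i,r))}N[c]$ strictly contains $\{(v_i,r)\}$, contradicting self-location-domination. Together with Lemma~\ref{lem:lowerbound}, which already forces the four corners into $C$, this yields the characterization: $C$ is self-locating-dominating in $P_n\Box P_2$ if and only if, in each row $r$, the endpoints $(v_1,r),(v_n,r)$ are codewords and no two horizontally consecutive vertices are both non-codewords; equivalently, the non-codewords of row $r$ form an independent set inside the interior path $v_2v_3\cdots v_{n-1}$. The converse direction uses that when both horizontal neighbours of $(v_i,r)$ are codewords, $\bigcap_{c\in I((v_i,r))}N[c]\subseteq N[(v_{i-1},r)]\cap N[(v_{i+1},r)]=\{(v_i,r)\}$.

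For the lower bound I would invoke that a path on $n-2$ vertices has independence number $\lceil(n-2)/2\rceil$. Hence each row carries at most $\lceil(n-2)/2\rceil$ non-codewords and thus at least $n-\lceil(n-2)/2\rceil$ codewords; summing over the two rows gives $\SLD(P_n\Box P_2)\ge 2\bigl(n-\lceil(n-2)/2\rceil\bigr)$, which equals $n+1$ when $n$ is odd and $n+2$ when $n$ is even.

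For the matching upper bound I would verify two explicit codes against the characterization. When $n$ is odd, take $C=\{(v_i,r): i\text{ odd}\}$: both endpoints are odd, the non-codewords occupy the even columns and are pairwise non-adjacent, and $|C|=2\cdot\frac{n+1}{2}=n+1$. When $n$ is even, take $C=\{(v_i,r): i\text{ odd}\}\cup\{(v_n,1),(v_n,2)\}$: now $v_1$ and $v_n$ are codewords, the non-codewords lie in the even columns $2,4,\dots,n-2$ and are pairwise non-adjacent, and $|C|=n+2$. I expect the only delicate point to be the local case analysis of the key step---keeping the closed-neighbourhood intersections and the boundary columns straight---after which the counting and the two constructions are routine.
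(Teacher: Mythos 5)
Your proof is correct, but it takes a genuinely different route from the paper's. The paper handles the odd case directly from Lemma~\ref{lem:lowerbound} (which already gives $\vert C\vert \geq n+1$ for every self-locating-dominating code), and then proves the even case by induction on $k$ with $n=2k$: it either finds a codeword in every column (and counts using the four forced corners), or splits the ladder at a codeword-free column into two sub-ladders of opposite parity, invoking Lemma~\ref{lem:lowerbound} on the odd piece and the inductive hypothesis on the even piece; this in turn leans on the $2$-domination machinery of Lemmas~\ref{lem:SLD2domination} and~\ref{lem:2dominating}. You instead prove a complete structural characterization: $C$ is self-locating-dominating in $P_n\Box P_2$ if and only if, in each row, both endpoints are codewords and no two consecutive vertices are non-codewords. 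Your local case analysis is sound (the two-codeword intersection $N[(v_{i+1},r)]\cap N[(v_i,3-r)]=\{(v_i,r),(v_{i+1},3-r)\}$ and the converse computation $N[(v_{i-1},r)]\cap N[(v_{i+1},r)]=\{(v_i,r)\}$ are both right; the only cosmetic gap is that the subcase $I((v_i,r))=\emptyset$ contradicts domination directly rather than via a too-large intersection), and it legitimately borrows only the corner-forcing part of Lemma~\ref{lem:lowerbound}. From the characterization, the lower bound becomes a per-row count against the independence number $\lceil (n-2)/2\rceil$ of the interior path, which yields $n+1$ or $n+2$ uniformly in the parity, and your two constructions match. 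What your approach buys is stronger information (a description of \emph{all} self-locating-dominating codes of the ladder, not just their minimum size), a parity-uniform argument, and no induction; what the paper's approach buys is economy within its narrative, since it recycles the $2$-domination results it has already established for graphs of girth at least $5$ and for grids.
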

\begin{proof}
If $n=2k+1$, $k\geq 1$, then the set $\{(v_{2i+1}, 1), (v_{2i+1}, 2)\colon 0\leq i\leq k\}$ (see Figure~\ref{fig:oddladder}) is a self-locating-dominating code with $2(k+1)=n+1$ vertices. Thus, Lemma~\ref{lem:lowerbound} gives $\gamma^{SLD}(P_{2k+1}\Box P_2)=(2k+1)+1$.

\begin{figure}[ht]
\begin{center}
\includegraphics[height=0.1\textheight]{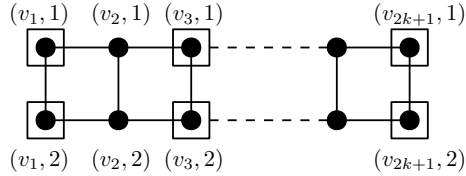}
\caption{Squared vertices form an optimal self-locating-dominating code in $P_{2k+1} \square P_2$.}
\label{fig:oddladder}
\end{center}
\end{figure}

Assume now that $n=2k$, $k\geq 1$. We will prove that $\gamma^{SLD}(P_{2k}\Box P_2)\geq 2k+2$, by induction on $k$. Clearly, $\gamma^{SLD}(P_2\Box P_2)=4$ (by the proof of Lemma~\ref{lem:lowerbound}). Assume that the statement is true for $r<k$, $k\geq 2$, and let $C\subseteq V(P_{2k}\Box P_2)$ be a self-locating-dominating code. Suppose that $\{(v_i,1),(v_i,2)\}\cap C\neq \emptyset$ for every $i\in \{1,2, \dots , 2k\}$. By Lemma~\ref{lem:lowerbound}, $(v_1,1),(v_1,2),(v_{2k},1),$ $(v_{2k},2)\in C$. So $\vert C\vert \geq 2k+2$.

Suppose now that there exists $i\in \{2,\dots , 2k-1\}$ such that
$(v_i,1),(v_i,2)\notin C$. Since $C$ is also a $2$-dominating set,
we obtain that $\{(v_{i-1},1), (v_{i-1},2), (v_{i+1},1),
(v_{i+1},2)\}\subseteq C$. Consider the induced subgraphs
$G_1=P_{i-1}\Box P_2$ and $G_2=P_{2k-i}\Box P_2$, with
self-locating-dominating codes $C\cap V(G_1)$ and $C\cap V(G_2)$,
respectively. Note that $(i-1)+(2k-i)$ is odd. {Let us assume
that $i-1$ is odd and $2k-i$ is even (the other case is analogous).
For $i-1$  odd, we know that $\vert C\cap V(G_1)\vert \geq
(i-1)+1$ (this holds also for $i=2$) and when $2k-i$ is an even number,
by the inductive hypothesis, $\vert C\cap V(G_2)\vert \geq
(2k-i)+2$.} This gives $\vert C\vert \geq 2k+2$.

Finally the set $\{(v_{2i+1}, 1), (v_{2i+1}, 2)\colon 0\leq i\leq k-1\}\cup \{ (v_{2k},1),(v_{2k},2)\}$ (see Figure~\ref{fig:evenladder}) is a self-locating-dominating code of $P_{2k}\Box P_2$ with $2k+2$ vertices and therefore $\gamma^{SLD}(P_{2k}\Box P_2)=2k+2$.
\end{proof}

\begin{figure}[ht]
\begin{center}
\includegraphics[height=0.1\textheight]{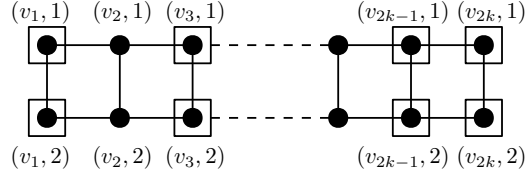}
\caption{Squared vertices form an optimal self-locating-dominating code in $P_{2k} \square P_2$.}
\label{fig:evenladder}
\end{center}
\end{figure}

\begin{remark}
Recall that we have $\SLD(P_n) = \gamma_2(P_n) = \lceil (n+1)/2 \rceil$ by Corollary~\ref{cor:PathSLD}. Notice then that, whether $n$ is even, with $n=2k$, or odd, with $n=2k+1$, we have shown that $\gamma^{SLD}(P_n\Box P_2)=2k+2=2(k+1)=\vert V(P_2)\vert \gamma^{SLD}(P_n)$. Therefore, the upper bound of Case~(i) of Theorem~\ref{ThmCartesianProduct} is attained for both even and odd $n$.
\end{remark}


We now focus on the solid-location-domination number of ladders.
\begin{proposition}\label{pro:solidladder}
Let $n \geq 1$ be an integer and $C\subseteq V(P_n\Box P_2)$ be a solid-locating-dominating code. Then
$\vert C\vert \geq n.$
\end{proposition}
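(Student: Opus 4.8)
The plan is to count codewords one column at a time. For $1\le i\le n$ call the pair $\{(v_i,1),(v_i,2)\}$ the $i$-th \emph{column}, and say it is \emph{full}, \emph{half} or \emph{empty} according as $|C\cap\{(v_i,1),(v_i,2)\}|$ equals $2$, $1$ or $0$. Writing $F$, $H$ and $E$ for the numbers of full, half and empty columns, we have $|C|=2F+H$ and $n=F+H+E$, so the claim $|C|\ge n$ is \emph{equivalent} to $F\ge E$. Hence it suffices to injectively assign to every empty column a distinct full column.

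First I would extract the local consequences of the two defining conditions. For an empty column $i$ the only possible codewords in $I(C;(v_i,1))$ and $I(C;(v_i,2))$ sit in columns $i-1$ and $i+1$, so domination forces $C$ to meet both the bottom and the top row among these two neighbouring columns. Two immediate corollaries are that there are never three consecutive empty columns (the middle bottom vertex would be undominated) and that two consecutive empty columns $i,i+1$ force columns $i-1$ and $i+2$ to be full. The no-nesting condition is the decisive ingredient: if some non-codeword $u$ has $I(C;u)=\{c\}$, then $I(C;u)\subseteq I(C;w)$ is forbidden for every other non-codeword $w$, so a single codeword can be the \emph{sole} identifier of at most one non-codeword. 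On the ladder this forbids one codeword from privately serving two empty-column vertices lying on opposite sides of it, which is exactly what makes patterns such as $E,F,E$ impossible unless extra codewords appear two columns away.

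Using these facts I would build the injection by charging each empty column that has a full neighbour to such a neighbour, and checking, via the sole-identifier lemma, that the charges stay distinct. The hard part, and the main obstacle, is the \emph{crossing} empty column, where $(v_{i-1},1),(v_{i+1},2)\in C$ but $(v_{i-1},2),(v_{i+1},1)\notin C$ (or the mirror image): here neither neighbour is full, yet comparing $I(C;(v_i,1))=\{(v_{i-1},1)\}$ with $I(C;(v_{i-1},2))$, and $I(C;(v_i,2))=\{(v_{i+1},2)\}$ with $I(C;(v_{i+1},1))$, forces $(v_{i-2},2),(v_{i+2},1)\in C$; one must then propagate these forced codewords outward and show they supply a full column to charge to, injectively. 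A cleaner way to organize the same ideas, which I would attempt first, is induction on $n$ (with $n\le 2$ checked directly): when column $1$ or column $n$ is empty the forcing above makes its two inner neighbours full, so deleting the empty column together with the adjacent full column removes exactly two columns and two codewords, while the restriction stays solid-locating-dominating because the \emph{second} full column has no non-codewords and hence shields the new boundary; the induction hypothesis then applies to the length-$(n-2)$ subladder. The interior empty column, handled by splitting the ladder at that column into two solid-locating-dominating codes, is once more the step where the extra codeword demanded by $F\ge E$ must be produced, and is the crux of the proof.
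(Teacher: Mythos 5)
Your overall skeleton is the same as the paper's: reduce $\vert C\vert\ge n$ to comparing full and empty columns, prove the local forcing lemmas (empty boundary columns and adjacent empty columns force flanking full columns), and run an induction in which the boundary case is settled by cutting off two columns containing two codewords and the adjacent-empties case by splitting at the two full flanking columns. All of that is correct and matches the paper. The problem is that the proposal stops exactly where the proof still has to be finished: the isolated interior empty column. As you yourself note, splitting the ladder at such a column only gives $\vert C\vert\ge (i-1)+(n-i)=n-1$, one codeword short; you call producing the missing codeword ``the crux'' but never produce it, so as written the argument is incomplete.

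Moreover, your analysis of the ``crossing'' configuration is mistaken. Suppose column $i$ is empty, $(v_{i-1},1),(v_{i+1},2)\in C$ and $(v_{i-1},2),(v_{i+1},1)\notin C$. Then $I(C;(v_i,1))=\{(v_{i-1},1)\}$, and since $(v_{i-1},1)$ lies in the \emph{same column} as the non-codeword $(v_{i-1},2)$, it automatically belongs to $I(C;(v_{i-1},2))$. Hence $I(C;(v_i,1))\subseteq I(C;(v_{i-1},2))$ holds no matter what codewords appear elsewhere: the configuration is outright impossible, and placing $(v_{i-2},2)$ or $(v_{i+2},1)$ in $C$ ``two columns away'' cannot repair it, so there is nothing to propagate outward. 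This observation is precisely what closes your gap, and it is how the paper finishes: if column $i$ is empty and a neighbouring column, say $i+1$, is half with codeword $(v_{i+1},2)$, then $(v_{i+2},1)\in C$ (else $I((v_{i+1},1))=\{(v_{i+1},2)\}\subseteq I((v_i,2))$), then $(v_{i+2},2)\in C$ (else $I((v_{i+1},1))\subseteq I((v_{i+2},2))$), then $(v_{i-1},1)\in C$ by domination of $(v_i,1)$, and finally $(v_{i-1},2)\in C$ (else $I((v_i,1))=\{(v_{i-1},1)\}\subseteq I((v_{i-1},2))$). Thus a half neighbour of an empty column forces \emph{both} outer flanking columns to be full, which rules out the crossing and parallel patterns simultaneously and shows that every isolated empty column sits between two full columns with no other empty column strictly between them. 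That sandwich property yields exactly the injection from empty columns to full columns you were seeking, hence $F\ge E$ and $\vert C\vert\ge n$.
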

\begin{proof}
The statement is clearly true for $n=1$ and $n=2$. Assume now that $n\geq 3$ and the claim is true for every $k<n$. Let $C\subseteq V(P_n\Box P_2)$ be a solid-locating-dominating code and suppose to the contrary that $\vert C\vert< n$. Then there exists $i\in \{1,2,\dots, n\}$ such that $(v_i,1),(v_i,2)\notin C$.

\begin{figure}[ht]
\begin{center}
\includegraphics[height=0.12\textheight]{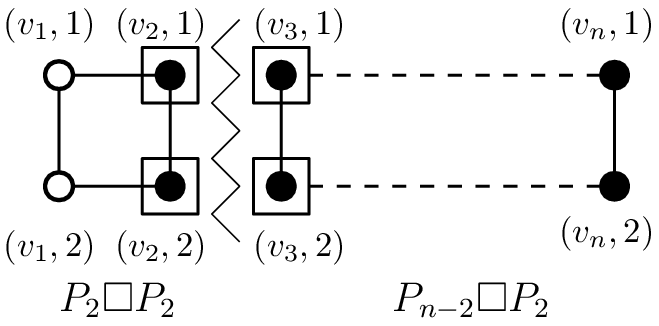}
\caption{Squared vertices are in $C$ and white vertices are not in $C$.}
\label{fig:solidladder1}
\end{center}
\end{figure}

Firstly suppose that $(v_1,1),(v_1,2)\notin C$, then using the fact that $C$ is a solid-locating-dominating code we obtain that $(v_2,1),(v_2,2),(v_3,1),(v_3,2)\in C$. Consider the induced subgraphs $G_1=P_2\Box P_2$ and $G_2=P_{n-2}\Box P_2$. Clearly both $C\cap V(G_1)$ and $C\cap V(G_2)$ are solid-locating-dominating codes in $G_1$ and $G_2$ respectively (see Figure~\ref{fig:solidladder1}). Moreover, $\vert C\cap V(G_1)\vert =2$ and by the inductive hypothesis $\vert C\cap V(G_2)\vert \geq n-2$. Hence, we have $\vert C\vert \geq n$ (a contradiction). Therefore, we may assume that $1<i$ and, with the same reasoning, that $i<n$.

Assume now that $(v_{i+1},1), (v_{i+1},2)\notin C$. Consequently, the vertices $(v_{i-1},1)$, $(v_{i-1},2)$, $(v_{i+2},1)$ and $(v_{i+2},2)$ belong to $C$.
 Consider the induced subgraphs $G_1=P_i\Box P_2$ and $G_2=P_{n-i}\Box P_2$ with solid-locating-dominating codes $C\cap V(G_1)$ and $C\cap V(G_2)$, respectively (see Figure~\ref{fig:solidladder2}). By the inductive hypothesis $\vert C\cap V(G_1)\vert \geq i$ and $\vert C\cap V(G_2)\vert \geq n-i$, so $\vert C\vert \geq n$, which is a contradiction. A similar argument can be used if $(v_{i-1},1),(v_{i-1},2)\notin C$.

\begin{figure}[ht]
    \begin{center}
    \begin{subfigure}{4.5cm}
    \includegraphics[width=1\textwidth]{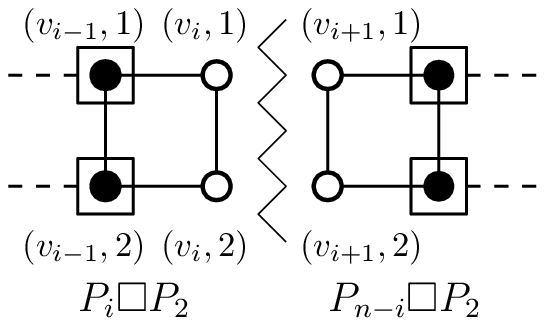}
    \caption{}\label{fig:solidladder2}
    \end{subfigure}
    \hspace{1cm}
    \begin{subfigure}{4.5cm}
    \includegraphics[width=1\textwidth]{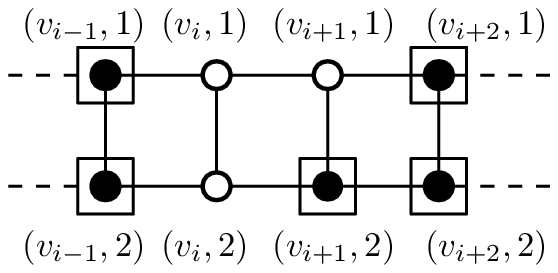}%
    \caption{}\label{fig:solidladder3}%
    \end{subfigure}
    \caption{Squared vertices are in $C$ and white vertices are not in $C$.}%
    \end{center}
\end{figure}




Suppose that $(v_{i+1},1)\notin C$ and $(v_{i+1},2)\in C$. Clearly, $i+2\leq n$ and $(v_{i+2},1)\in C$ because otherwise $I((v_{i+1},1))\subseteq I((v_i,2))$. Moreover, $(v_{i+2},2)\in C$ because otherwise $I((v_{i+1},1))\subseteq I((v_{i+2},2))$. Furthermore, $(v_{i-1},1)\in C$ since $I((v_{i},1))\neq\emptyset$ and $(v_{i-1},2)\in C$ as otherwise $I((v_i,1)) \subseteq I((v_{i-1},2))$. Hence, there are two pairs of codewords $\{(v_{i-1},1),(v_{i-1},2)\}$ and $\{(v_{i+2},1),(v_{i+2},2)\}$ such that the pair of non-codewords $\{(v_{i},1),(v_{i },2)\}$ is between them (see Figure~\ref{fig:solidladder3}).


We have stated that there cannot be a pair of non-codewords at the beginning or the end of the ladder. Furthermore, we may assume that there are no consecutive pairs of non-codewords and, if there is a pair of non-codewords $(v_{i},1),(v_{i },2)$, then there are two pairs of codewords $\{(v_{j},1),(v_{j },2)\}$ and $\{(v_{j'},1),(v_{j' },2)\}$, $j<i<j'$, such that if for some $i'$, $j<i'<j'$, we have $(v_{i'},1),(v_{i' },2)\not\in C$, then $i'=i$. Hence, the number of vertex pairs such that $\{(v_{j},1),(v_{j },2)\}\subseteq C$ is greater or equal to the number of vertex pairs $(v_{i},1),(v_{i },2)\not\in C$ and thus, we have $|C|\geq n$.
\end{proof}

We can finally determine the exact solid-location-domination numbers of all ladders.
\begin{corollary}
If $n\geq 1$ is an integer, then $\gamma^{DLD}(P_n\Box P_2)=n$.
\end{corollary}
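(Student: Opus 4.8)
The plan is to prove the two inequalities $\DLD(P_n\Box P_2)\ge n$ and $\DLD(P_n\Box P_2)\le n$ separately. The lower bound requires no new work: Proposition~\ref{pro:solidladder} asserts precisely that every solid-locating-dominating code $C$ in $P_n\Box P_2$ satisfies $|C|\ge n$, so applying it to an optimal code gives $\DLD(P_n\Box P_2)\ge n$ at once.

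For the matching upper bound I would exhibit an explicit solid-locating-dominating code of size $n$, namely one full row of the ladder, $C=\{(v_i,1)\colon 1\le i\le n\}$. The set of non-codewords is then exactly the opposite row, $\{(v_i,2)\colon 1\le i\le n\}$. In $P_n\Box P_2$ the only neighbour of $(v_i,2)$ lying in $C$ is $(v_i,1)$, via the vertical edge, so one computes $I(C;(v_i,2))=\{(v_i,1)\}$. In particular every non-codeword has a non-empty $I$-set, and these $I$-sets are pairwise distinct singletons. Hence for any two distinct non-codewords $u=(v_i,2)$ and $v=(v_j,2)$ with $i\ne j$ we have $I(C;u)\setminus I(C;v)=\{(v_i,1)\}\ne\emptyset$, so $C$ satisfies the defining condition of Definition~\ref{defDLD} and is solid-locating-dominating. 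This yields $\DLD(P_n\Box P_2)\le |C|=n$.

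Combining the two bounds gives $\DLD(P_n\Box P_2)=n$, as claimed. There is essentially no remaining obstacle: all of the genuine difficulty lies in the lower bound, which was already handled by the induction in Proposition~\ref{pro:solidladder}, while the upper bound here is a one-line construction whose verification is immediate. The only minor points to note are the small cases $n=1,2$, where the distinctness requirement is either vacuous or concerns a single pair of non-codewords; the same one-row code $C$ covers these trivially.
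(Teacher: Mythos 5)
Your proof is correct and follows exactly the paper's own argument: the upper bound comes from the same code $C=\{(v_i,1)\colon 1\le i\le n\}$ with the same observation that $I((v_i,2))=\{(v_i,1)\}$, and the lower bound is cited from Proposition~\ref{pro:solidladder} just as in the paper.
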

\begin{proof}
The set $C=\{ (v_i,1)\colon 1\leq i\leq n\}$ is a solid-locating-dominating code in $P_n\Box P_2$ with $n$ elements since $I((v_i,2))=\{(v_i,1)\}$. So, $\gamma^{DLD}(P_n\Box P_2)\leq n$. The reverse inequality comes from Proposition~\ref{pro:solidladder}.
\end{proof}

\begin{remark}
Recall that we have $\DLD(P_n) = \beta(P_n) = \lceil n/2 \rceil$ by Corollary~\ref{cor:PathDLD}. Notice that, if $n$ is an even number, then $\gamma^{DLD}(P_n\Box P_2)=2 \cdot \tfrac{n}{2}=\vert V(P_2)\vert \gamma^{DLD}(P_n)$. Therefore, the upper bound shown in Case~(ii) of Theorem~\ref{ThmCartesianProduct} is attained in this case.
\end{remark}

\section{Realization theorems} \label{SectionRealization}

In this section, we consider location-domination, self-location-domination and solid-location-dom\-i\-nat\-ion numbers; in particular, we study what are the values the location-domination number can simultaneously have with the self-location-domination or the solid-location-domination number in a graph. Similar types of questions have been previously studied in~\cite{CHMPPbec} regarding various values such as domination number, location-domination number and metric dimension. In the following theorem, we characterize which values of location-domination and self-location-domination numbers can be simultaneously achieved in a graph.
\begin{theorem}\label{realization LD SLD}
Let $a$ and $b$ be positive integers. Then there exists a graph $G$ such that $a=\gamma^{LD}(G)$ and $b=\gamma^{SLD}(G)$ if and only if we have $$0\leq  b-a\leq 2^a-1.$$
\end{theorem}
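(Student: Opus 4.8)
The statement is an equivalence, so the plan is to treat the two directions separately; the forward (necessity) direction is short, while the reverse (construction) direction carries essentially all of the difficulty.

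For necessity, I would assume $\LD(G)=a$ and $\SLD(G)=b$. The inequality $0\le b-a$ is immediate from Corollary~\ref{CorollarySLdtoDLD}, which gives $\LD(G)\le\DLD(G)\le\SLD(G)$. For the upper inequality I would combine two facts recorded earlier: first, $C=V$ is always self-locating-dominating, so $b=\SLD(G)\le n$ where $n=|V(G)|$; second, Slater's counting bound~\cite{S:DomLocAcyclic} gives $n\le a+2^a-1$ whenever $\LD(G)=a$. Chaining these yields $b\le a+2^a-1$, i.e. $b-a\le 2^a-1$. This direction uses nothing beyond monotonicity of the three parameters and the observation that the $I$-sets of the non-codewords of an optimal locating-dominating code are distinct nonempty subsets of a set of size $a$.

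For sufficiency, fix $a,b$ with $0\le d:=b-a\le 2^a-1$ and build an explicit graph $G=G(a,d)$. My plan is to separate the two parameters into two essentially independent knobs. I would start from a core on a distinguished $a$-set $A=\{c_1,\dots,c_a\}$ whose sole purpose is to pin $\LD$ to $a$: for instance turning $A$ into a clique and attaching enough pendant-type structure so that some code of size $a$ locates and dominates everything, while every smaller set fails, the lower bound $\LD(G)\ge a$ being supplied by Slater's bound together with a direct argument forcing each $c_i$ to be needed. Then, to raise $\SLD$ by exactly $d$ without disturbing $\LD$, I would attach $d$ \emph{forcer} vertices indexed by $d$ distinct nonempty subsets $S_1,\dots,S_d\subseteq A$, where a forcer $x_j$ is made adjacent to exactly $\{c_i : i\in S_j\}$. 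Because $A$ is a clique, each $x_j$ satisfies $N(x_j)\subseteq N[c_i]$ for any $c_i\in S_j$, so by Theorem~\ref{ThmForcedCodewordsChar} it is a forced codeword for self-location-domination and contributes $+1$ to $\SLD$; at the same time $x_j$ carries the distinct nonempty $I$-set $S_j$ and can remain a locatable non-codeword for ordinary location-domination, contributing $0$ to $\LD$. The cap $d\le 2^a-1$ then appears precisely because there are only $2^a-1$ nonempty subsets of $A$ available to index the forcers, matching the counting bound used in the necessity direction.

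The hard part will be the simultaneous and exact control of both parameters. Verifying $\SLD(G)=b$ requires identifying the full set of forced codewords via Theorem~\ref{ThmForcedCodewordsChar} and then checking that this forced set, together with a minimal completion, is a self-locating-dominating code of size exactly $a+d$ with no smaller one available; the delicate point is that edges added among the forcers in order to make the core vertices themselves forced can simultaneously destroy the forced-ness of other forcers, so the neighbourhoods must be chosen so that these requirements do not interfere. Equally delicate is the lower bound $\LD(G)\ge a$, which must hold uniformly as $d$ ranges over the whole interval. I would therefore isolate the two boundary regimes, namely $d=0$ (no forcers, so the core alone must realize $\LD=\SLD=a$) and $d$ close to $2^a-1$ (where the graph is pushed to the extremal case of Slater's bound and almost every vertex must be forced), and handle them, together with the smallest values of $a$, as explicit base cases, reserving the clique-plus-forcers scheme for the generic range.
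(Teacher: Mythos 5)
Your necessity argument is correct and coincides with the paper's (monotonicity plus Slater's bound $n\le a+2^a-1$). The gap is in the construction, and it is not merely the ``delicate point'' you flag at the end: the central additivity claim --- that each forcer contributes $+1$ to $\SLD$ on top of a clique core contributing $a$ --- is false, because forcers are not inert with respect to self-location of the core. Concretely, take $a=3$, $d=3$, $A=\{c_1,c_2,c_3\}$ a triangle. If the three forcer labels are the three $2$-subsets of $A$, the three forcers \emph{by themselves} form a self-locating-dominating code (for the non-codeword $c_1$, the two forcers adjacent to it have closed neighbourhoods meeting exactly in $\{c_1\}$), so $\SLD(G)=3$, not $6$. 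If instead the labels are the three singletons (triangle with a pendant at each vertex), the pendants are the only forced codewords, and the pendants together with the single extra vertex $c_1$ already form a self-locating-dominating code, so $\SLD(G)=4$, again not $6$; in both cases $\LD(G)=3$. The structural reason is this: for a graph on exactly $a+d$ vertices to have $\SLD(G)=a+d$, the whole vertex set must be the \emph{unique} self-locating-dominating code, i.e.\ every vertex must be forced (equivalently, by Corollary~\ref{cor:maximalSLD}, every maximal vertex of the vicinal preorder must have a twin). But a clique vertex with a pendant-type private neighbour --- exactly the structure your $\LD(G)\ge a$ argument needs --- is never forced, since any $v$ with $N(c_i)\subseteq N[v]$ would have to dominate that pendant; and without such structure the forcers' neighbourhoods intersect inside the clique and self-locate the core, collapsing $\SLD$ as in the first example. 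Either way $\SLD(G)<a+d$.

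The scheme can be partially repaired: keep the clique, attach a separate pendant to each $c_i$ to force $\LD(G)\ge a$, and use only forcer labels of size at least $2$ (a singleton label would have the same $I$-set as a pendant under the code $A$, destroying $\LD(G)=a$), chosen so that every $c_i$ lies in some label; then the pendants and forcers are precisely the forced vertices and they do form a self-locating-dominating code. But this realizes only a middle window, roughly $\lceil a/2\rceil\le b-a\le 2^a-1-a$: both ends of the interval remain open as whole parametric ranges, not the finitely many ``base cases'' you anticipate, and in particular the cap $b-a=2^a-1$, which you attribute to the count of nonempty subsets of $A$, is unreachable by your scheme precisely because the singleton labels are the forbidden ones. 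The paper avoids this interference problem with a different architecture: for $2\le b-a\le 2^a-2$ it builds a graph on $b+1$ vertices containing a \emph{universal} vertex $u_1$, so that every other vertex is forced at once ($N(v)\subseteq N[u_1]$ for all $v$) and $V\setminus\{u_1\}$ is the optimal self-locating-dominating code of size $b$, while $\LD(G)=a$ is enforced by pendant-like pairs through $u_1$ together with subset-coded attachments; the extremal case $b-a=2^a-1$ and the small cases get separate constructions in which \emph{all} $b$ vertices are forced via twins. If you wish to keep the clique-based plan, you must redesign the label family so that every clique vertex is itself forced (e.g.\ by creating true twins) without ever producing equal $I$-sets for the locating-domination lower bound; that tension is where the real content of the theorem lies, and your proposal leaves it unresolved.
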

\begin{proof}We cannot have $a>b$ since each self-locating-dominating code is also locating-dominating. We also cannot have $b> a+2^a-1$ since we can have at most $a+2^a-1$ vertices in a graph with locating-dominating code of cardinality $a$ by \cite{S:DomandRef}. Hence, $b-a$ is in the claimed interval. Based on the difference $b-a$, the proof divides into the following cases: (i) $a=b$, (ii) $b-a=1$, (iii) $a=2$ and $b=4$ or $b=5$, (iv) $ 2\leq b-a\leq 2^{a}-2$ and $a\geq3$, and (v) the extremal case $a\geq 3$ and $b-a=2^a-1$.

(i) Let us first study the case $a=b$. We can now consider the discrete graph $G$, that is, the graph with no edges, of order $a$ and we have $\gamma^{SLD}(G)= \gamma^{LD}(G)$.

(ii) Let us then study the case $b=a+1$. We can consider graph $G$ of order $b$ with one edge and we have $\gamma^{SLD}(G)= \gamma^{LD}(G)+1$.

(iii) Let us then study the case  $a=2$ and $b=4$. We immediately notice that these numbers are realized in the graph of Figure~\ref{fig:realization_2_4}. The case $a=2$ and $b=5$ is given in the graph of Figure~\ref{fig:realization_2_5}.

\begin{figure}[ht]
    \begin{center}
    \begin{subfigure}{4.3cm}
    \includegraphics[width=1\textwidth]{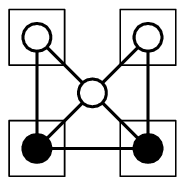}
    \caption{$\gamma^{LD}(G)=2, \gamma^{SLD}(G)=4$.}\label{fig:realization_2_4}
    \end{subfigure}
    \hspace{1cm}
    \begin{subfigure}{4.4cm}
    \includegraphics[width=1\textwidth]{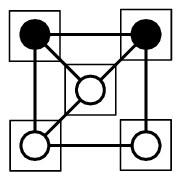}
    \caption{$\gamma^{LD}(G)=2, \gamma^{SLD}(G)=5$.}\label{fig:realization_2_5}
    \end{subfigure}
    \caption{Black vertices are in an optimal locating-dominating code and squared vertices are in an optimal self-locating-dominating code.}%
   \label{fig:realization_2}
    \end{center}
\end{figure}


(iv) Let us then study the case $ 2\leq  b-a\leq 2^{a}-2$ and $a\geq3$. There is an integer $k\in \Z$ such that $2^{k-1}-2< b-a\leq2^{k}-2$. Notice that $2\leq k\leq a$. Let us have $K=\{v_i\mid1\leq i\leq k\}$, $K'=\{v_i\mid k+1\leq i\leq a\}$, $P=\{u_i\mid1\leq i\leq b+1-a\}$ and $V=K\cup K'\cup P$. We have $|V|=b+1$. Let us connect $u_1$ to each vertex in $V$ and each vertex $u_i$, $2\leq i\leq k+1$ to a single vertex $v_j$, $j=i-1$. Indeed, the latter edges are possible since for any $k \geq 3$ we have $|P|\geq 2^{k-1}-1+1\geq k+1$ and if $k = 2$, then $b-a = 2$ and $|P| \geq 3$. 
Let us further connect each other vertex in $P$ to some proper non-empty subset of vertices in $K$ in such a manner that no two vertices in $P$ have the same neighbourhood in $K$. This choice of non-identical neighbourhoods is possible since we have $|P|\leq 2^{|K|}-1 = 2^k-1$. A graph $G$ with $k=4$ and $|V|=16$ is shown in the Figure \ref{SLD15LD6example}.

Let us first consider self-location-domination in $G$. There are $b$ forced codewords in a self-locating-dominating code since we have $N(v)\subseteq N[u_1]$ for each $v\in V$. Hence, we have $\gamma^{SLD}(G)\geq b$ and $V\setminus\{u_1\}$ is a self-locating-dominating code since $I(u_2)\cap I(u_3)=\{u_1\}$ and thus, $\gamma^{SLD}(G)= b$.

Let us then consider location-domination in $G$. We can choose each vertex in $K\cup K'$ as a codeword and have a locating-dominating code of size $a$. Let us show that $\gamma^{LD}(G)\geq a$.  We have $N[u_{j+1}]=\{u_1,u_{j+1},v_j\}$ for each $1\leq j\leq k$. Hence, if $v_j$ and $v_j'$ in $K$ are non-codewords, then at least two of vertices $u_{j+1}$, $u_{j'+1}$ and $u_1$ are codewords. Furthermore, by the same idea, if we have $t$ non-codewords in $K$, then there are at least $t$ codewords in $P$. Hence, we have at least $k$ codewords in $K\cup P$. Since we have $N(v_i)=\{u_1\}$ for each vertex $v_i$ in $K'$, we have at least $|K'|$ codewords in $K'\cup \{u_1\}$. If $u_1$ is a non-codeword, then it is immediate that we have $|K|+|K'|=a$ codewords in $G$. Hence, we may assume that $u_1$ is a codeword. If all the vertices $v_{k+1}, v_{k+2}, \ldots, v_{a}$ are codewords, then we are again immediately done. Moreover, by the previous observations at most one of the vertices can be a non-codeword. Hence, we may assume that there exists a unique non-codeword 
$v_y$, $y\geq k+1$. Now we have $I(v_y)=\{u_1\}$. Therefore, for any $1 \leq j\leq k$ at least one of $v_j$ and $u_{j+1}$ is a codeword as otherwise $I(u_{j+1}) = \{u_1\} = I(v_y)$ (a contradiction). Thus, there exist $|K| = k$ codewords in $K\cup P\setminus \{u_1\}$. Hence, in all the cases, we have $\gamma^{LD}(G)\geq a$.




(v) Let us finally study the extremal case $a\geq3$ and $b-a=2^a-1$. Let us consider graph $G=(V,E)$. Let us have  $K=\{v_i\mid1\leq i\leq a\}$, $P=\{u_i\mid1\leq i\leq b-a\}$ and $V=K\cup P$. Let us connect
\begin{enumerate}
\item $v_1$ to $v_i$ for each $2\leq i\leq a$
\item $u_1$ to $v_i$ for each $1\leq i\leq a$
\item $u_i$, $2\leq i\leq a$, to $v_1$ and $v_{i}$
\item $u_i$, $a+1\leq i\leq b-a$, to some non-empty subset of vertices of $K$ in such a manner that no two vertices of $P$ have the same neighbourhood in $K$. This is possible since $|P|\leq 2^{|K|}-1$.
\item $u_1$ to $u_i$, $2\leq i\leq b-a$, if $u_i\in N(v_1)$
\item $u_i$, $2\leq i\leq a$, to $u_j$, $a+1\leq j\leq b-a$, if $u_j\in N(v_i)$.
\end{enumerate}

Since we have $2^a+a-1$ vertices in the graph, we have $\gamma^{LD}(G)\geq a$. On the other hand, we can choose $K$ as an optimal locating-dominating code, since if $u,u'\in P$, then $N(u)\cap K\neq N(u')\cap K$ and $|K|=a$. In order to prove that $\gamma^{SLD}(G) = b = |V|$, we have to show that each vertex of the graph is a forced codeword. By Theorem \ref{ThmForcedCodewordsChar}, it suffices to show that for each vertex $v\in V$ there exists another vertex $v'$ such that $N(v)\subseteq N[v']$. Indeed, it is straightforward to verify that $N[v_i]=N[u_i]$ for $1\leq i\leq a$ and $N(u_i)\subseteq N[u_1]$ for $a+1\leq i\leq b-a$. 
Thus, the claim follows.
\end{proof}

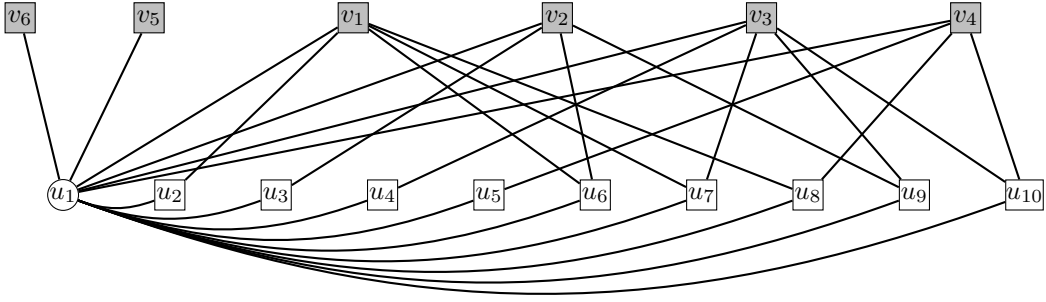
\begin{figure}
\centering
\begin{tikzpicture}
    \node[main node, minimum size=0.4cm] (1)[rectangle,draw][fill=lightgray]  {$v_1$};

    \node[main node, minimum size=0.4cm] (2)[rectangle,draw] [right = 2.3cm of 1][fill=lightgray]    {$v_2$};
    \node[main node, minimum size=0.4cm] (3) [rectangle,draw][right = 2.3cm of 2][fill=lightgray]  {$v_3$};
    \node[main node, minimum size=0.4cm] (4) [rectangle,draw][right = 2.3cm of 3][fill=lightgray]   {$v_4$};
    \node[main node, minimum size=0.4cm] (6) [below left = 2cm and 3.5cm of 1][fill=white]  {$u_1$};
    \node[main node, minimum size=0.4cm] (7)[rectangle,draw] [right = 1.0cm of 6][fill=white]   {$u_2$};
    \node[main node, minimum size=0.4cm] (8)[rectangle,draw] [right = 1.0cm of 7][fill=white]   {$u_3$};
    \node[main node, minimum size=0.4cm] (9) [rectangle,draw][right = 1.0cm of 8][fill=white]   {$u_4$};
    \node[main node, minimum size=0.4cm] (10)[rectangle,draw] [right = 1.0cm of 9][fill=white]   {$u_5$};
    \node[main node, minimum size=0.4cm] (11)[rectangle,draw] [right = 1.0cm of 10][fill=white]           {$u_6$};
    \node[main node, minimum size=0.4cm] (12)[rectangle,draw] [right = 1.0cm of 11][fill=white]   {$u_7$};
    \node[main node, minimum size=0.4cm] (13)[rectangle,draw] [right = 1.0cm of 12][fill=white]   {$u_8$};
    \node[main node, minimum size=0.4cm] (14)[rectangle,draw] [right = 1.0cm of 13][fill=white]   {$u_9$};
    \node[main node, minimum size=0.4cm] (15)[rectangle,draw] [right = 1.0cm of 14][fill=white]   {$u_{10}$};
    \node[main node, minimum size=0.4cm] (16)[rectangle,draw] [left = 2.3cm of 1][fill=lightgray]   {$v_5$};
        \node[main node, minimum size=0.4cm] (17)[rectangle,draw] [left = 1.3cm of 16][fill=lightgray]   {$v_6$};

    \path[draw,thick]
    (6) edge node {} (1)
    (6) edge node {} (2)
    (6) edge node {} (3)
    (6) edge node {} (4)
    (6) edge node {} (17)
    (6) edge node {} (16)
    (7) edge node {} (1)
    (8) edge node {} (2)
    (9) edge node {} (3)
    (10) edge node {} (4)
    (11) edge node {} (1)
    (11) edge node {} (2)
    (12) edge node {} (1)
    (12) edge node {} (3)
    (13) edge node {} (1)
    (13) edge node {} (4)
    (14) edge node {} (2)
    (14) edge node {} (3)
    (15) edge node {} (3)
    (15) edge node {} (4)


    (6) edge [bend right=20, looseness=1] node {} (7)
    (6) edge [bend right=20, looseness=1] node {} (8)
    (6) edge [bend right=20, looseness=1] node {} (9)
    (6) edge [bend right=20, looseness=1] node {} (10)
    (6) edge [bend right=20, looseness=1] node {} (11)
    (6) edge [bend right=20, looseness=1] node {} (12)
    (6) edge [bend right=20, looseness=1] node {} (13)
    (6) edge [bend right=20, looseness=1] node {} (14)
    (6) edge [bend right=20, looseness=1] node {} (15)

    ;
\end{tikzpicture}
\caption{Graph $G$ with $k=4$, $\gamma^{LD}(G)=6$ and $\gamma^{SLD}(G)=15$. Squared vertices are codewords in an optimal self-locating-dominating code and gray vertices are codewords in an optimal locating-dominating code.}\label{SLD15LD6example}
\end{figure}

In the following theorem, we proceed by characterizing which values of location-domination and solid-location-domination numbers can be simultaneously achieved in a graph.
\begin{theorem}
Let $a$ and $b$ be positive integers. Then there exists a graph $G$ such that $a=\gamma^{LD}(G)$ and $b=\gamma^{DLD}(G)$ if and only if we have $$0\leq b-a\leq 2^a-1-\binom{a}{\left\lceil\frac{a}{2}\right\rceil}.$$
\end{theorem}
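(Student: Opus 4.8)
The plan is to prove the two implications separately. Throughout write $M=\binom{a}{\lceil a/2\rceil}=\binom{a}{\lfloor a/2\rfloor}$; recall that $M$ is the largest possible antichain among subsets of an $a$-element set (Theorem~\ref{ThmSperner}) and that there are $2^a-1$ non-empty such subsets.

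\medskip
\noindent\textbf{Necessity.} The bound $b-a\ge 0$ is immediate from Corollary~\ref{CorollarySLdtoDLD}, so the real task is the upper bound. I would begin with an \emph{optimal} locating-dominating code $C$, $|C|=a$, write $W=V\setminus C$, and form the family $\mathcal{F}=\{I(C;w):w\in W\}$. Since $C$ is locating-dominating, $w\mapsto I(C;w)$ is an injection of $W$ into the non-empty subsets of $C$, so $|\mathcal{F}|=n-a$. Let $\mathcal{A}\subseteq\mathcal{F}$ be a maximum antichain of $(\mathcal{F},\subseteq)$ and let $W'\subseteq W$ be the non-codewords realizing it, so $|W'|=|\mathcal{A}|$. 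The crux is to verify that $D=V\setminus W'$ is solid-locating-dominating: its non-codewords are exactly the vertices of $W'$, each satisfies $I(D;w)\supseteq I(C;w)\neq\emptyset$, and for distinct $u,v\in W'$ the antichain property gives some $c\in I(C;u)\setminus I(C;v)$; as $c\in C\subseteq D$ we get $c\in I(D;u)$ while $c\notin I(D;v)$ (because $I(D;v)\cap C=I(C;v)$), so $I(D;u)\setminus I(D;v)\neq\emptyset$. Hence $\gamma^{DLD}(G)\le|D|=n-|\mathcal{A}|$, which rearranges to $b-a\le(n-a)-|\mathcal{A}|=|\mathcal{F}|-|\mathcal{A}|$.

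It then remains to bound $|\mathcal{F}|-|\mathcal{A}|$ combinatorially. Let $\mathcal{M}$ be the family of all $\lceil a/2\rceil$-element subsets of $C$; these are non-empty and form an antichain of size $M$. Since $\mathcal{M}\cap\mathcal{F}$ is an antichain inside $\mathcal{F}$, we have $|\mathcal{A}|\ge|\mathcal{M}\cap\mathcal{F}|$, whence $|\mathcal{F}|-|\mathcal{A}|\le|\mathcal{F}\setminus\mathcal{M}|\le(2^a-1)-M$. This is exactly $b-a\le 2^a-1-\binom{a}{\lceil a/2\rceil}$, so the necessity direction is complete and, I believe, clean.

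\medskip
\noindent\textbf{Sufficiency.} For the converse I would construct, for each admissible pair, an explicit graph, treating degenerate cases directly (for $b=a$ the discrete graph $D_a$ gives $\gamma^{LD}=\gamma^{DLD}=a$, and very small $a$ by inspection). In the general case set $t=b-a$ and fix a family $\mathcal{F}$ of non-empty subsets of $K=\{v_1,\dots,v_a\}$ containing all $\lceil a/2\rceil$-element subsets and with $|\mathcal{F}|=M+t$; by Sperner's theorem its largest antichain is still exactly $M$. Taking one vertex $p_F$ with $N(p_F)=F$ for each $F\in\mathcal{F}$, the code $K$ is locating-dominating, so $\gamma^{LD}(G)\le a$, and deleting a maximum-antichain's worth of the $p$-vertices exactly as in the necessity argument yields $\gamma^{DLD}(G)\le a+t=b$. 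To force the values from below I would attach forcing gadgets (for example twin vertices, which by the lemmas of Section~\ref{secDil} can each lie outside a solid-locating-dominating code at most once, and which also force a codeword for ordinary locating-domination), chosen so as not to disturb the relevant $I$-set family or the antichain count; the discrete-graph and small-parameter cases together with the construction of the preceding theorem provide the template for such gadgets.

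\medskip
I expect the lower bounds $\gamma^{LD}(G)\ge a$ and especially $\gamma^{DLD}(G)\ge b$ to be the main obstacle. The Dilworth bound $n-\nabla(G)\le\gamma^{DLD}(G)$ of Corollary~\ref{cor:lowerbound} is too weak here, since the bipartite skeleton has a large Dilworth number; instead I would argue directly on the $I$-sets that every solid-locating-dominating code must spend one additional codeword for each of the $t$ non-middle members of $\mathcal{F}$, because a containment between two realized $I$-sets can be destroyed only by promoting one of the two non-codewords involved into the code. Making this counting exact, and arranging the gadgets so that they simultaneously pin $\gamma^{LD}(G)=a$ without inflating $n$ or the antichain, is precisely where the care is required.
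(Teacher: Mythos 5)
Your necessity argument is correct and is, in substance, the paper's own: where you remove a maximum antichain $\mathcal{A}$ of the $I$-set family and then bound $|\mathcal{A}|$ from below by the middle layer, the paper simply removes the non-codewords whose $I$-sets have size exactly $\lceil a/2\rceil$ (a concrete maximum antichain); both arguments produce a solid-locating-dominating code of size $|V|-|\mathcal{A}|$ and yield $b-a\le 2^a-1-\binom{a}{\lceil a/2\rceil}$.

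The sufficiency half, however, has a genuine gap --- precisely the one you flag yourself. For your bipartite construction you prove only the upper bounds $\gamma^{LD}(G)\le a$ and $\gamma^{DLD}(G)\le a+t$; the matching lower bounds are deferred to unspecified ``forcing gadgets'', and they are not routine. First, $\gamma^{LD}(G)\ge a$ does not follow from the counting bound $n\le k+2^k-1$: your graph has only $a+\binom{a}{\lceil a/2\rceil}+t$ vertices, which already for $a=5$, $t=1$ (namely $16$ vertices) is below $(a-1)+2^{a-1}-1=19$, so a code of size $a-1$ must be excluded structurally, by analysing codes that omit vertices of $K$ (if $v_i,v_j\in K$ are both non-codewords, then for any set $S$ the two $p$-vertices with neighbourhoods $S\cup\{v_i\}$ and $S\cup\{v_j\}$ receive identical $I$-sets, and one must count how many such collisions force codewords). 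Second, your key claim for $\gamma^{DLD}(G)\ge b$, that a containment between realized $I$-sets ``can be destroyed only by promoting one of the two non-codewords involved'', controls only non-codewords on the $P$-side: a solid-locating-dominating code may also omit a set $K_0\subseteq K$, and then the $I$-sets of the non-codeword $p$-vertices become the traces $F\cap(K\setminus K_0)$ on a smaller ground set, so Sperner's theorem applies with a smaller parameter and the count must be redone (one needs, for every admissible pair, a bound of the shape $|K_0|+|P_0|\le\binom{a}{\lceil a/2\rceil}$, combined with domination, where $P_0$ is the set of non-codeword $p$-vertices). The paper's construction is engineered exactly to neutralise this case, and this is where it genuinely differs from yours: its Sperner family lives on a $k$-element set $K$ with $k$ chosen according to $b-a$, the set $K\cup K'$ is made a clique, a vertex $u_1$ adjacent to all of $K\cup K'$ turns the vertices of $K'$ into true twins, and pendant-type vertices hang from each $v_j\in K$; as a result, any solid-locating-dominating code either contains all of $K\cup K'$, in which case Sperner's theorem caps the non-codewords at $\binom{k}{\lceil k/2\rceil}$, or misses a vertex of $K\cup K'$, in which case it must contain all of $P$ and already has at least $b$ codewords. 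These structural forcings are the actual content of the sufficiency proof, not an optional add-on, so as written your proposal does not prove the theorem.
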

\begin{proof}
{Let us have a locating-dominating code $C_{LD}$ of cardinality $a$ in $G=(V,E)$. Then all the $|V|-a$ non-codewords  $u\in V$ have different and non-empty sets $I(C_{LD};u)$. Hence, we have $|V|\leq 2^a-1+a$. Denote  $R=\left\{v\in V\mid |N[v]\cap C_{LD}|= \left\lceil \frac{a}{2}\right\rceil, v\not\in C_{LD} \right\}$. Clearly, $0\le|R|\leq \binom{a}{\left\lceil\frac{a}{2}\right\rceil}$ and there are at most $2^a-1+a- \binom{a}{\left\lceil\frac{a}{2} \right\rceil}$ vertices in $V\setminus R$ (as the $I$-sets of the non-codewords have to be non-empty and unique). Denote then $D = V \setminus R$. Now for any distinct pair of vertices $v, w\in R$ we have $I(D;v)\not\subseteq I(D;w)$ because $C_{LD}\subseteq D$ and $|I(C_{LD};v)| = |I(C_{LD};w)| = \lceil a/2 \rceil$. Therefore, we have $b = \gamma^{DLD}(G) \leq |D| \leq 2^a-1+a- \binom{a}{\left\lceil\frac{a}{2} \right\rceil}$. Thus, we obtain that $b-a \leq 2^a-1- \binom{a}{\left\lceil\frac{a}{2} \right\rceil}$.
}


Let us first consider the situation $a=b$ and a graph $G=(V,E)$. We notice that this is possible by assuming that $G$ is the star with $a$ pendant vertices. Let us then assume that $a<b$ and let us have $k\in \Z_+$ such that $2^{k-1}-1-\binom{k-1}{\left\lceil\frac{k-1}{2}\right\rceil}< b-a\leq2^{k}-1-\binom{k}{\left\lceil\frac{k}{2}\right\rceil}$. Since $2^{k}-1-\binom{k}{\left\lceil\frac{k}{2}\right\rceil}$ is an increasing function on $k$ when $k>0$ and it gains value $1$ when $k=2$, each value of difference $b-a$ is linked to a unique value of $k$. Since the function gives $0$ when $k=1$, we can assume that $k\geq2$. Let us have $K=\{v_i\mid1\leq i\leq k\}$, $K'=\{v_i\mid k+1\leq i\leq a\}$, $P=\left\{u_i\mid1\leq i\leq b-a+\binom{k}{\left\lceil\frac{k}{2}\right\rceil}\right\}$ and $V=K\cup K'\cup P$. We have $|V|=b+\binom{k}{\left\lceil\frac{k}{2}\right\rceil}$. Let us connect
\begin{enumerate}
\item $v_i$ to $v_j$ for each $1\leq i<j\leq a$ forming the complete graph $K_a$,
\item $u_1$ to $v_i$ for each $1\leq i\leq a$,
\item each $u_i$, $2\leq i\leq \binom{k}{\left\lceil\frac{k}{2}\right\rceil}+1$, to $\left\lfloor\frac{k}{2}\right\rfloor$ vertices in $K$ in such a manner that all of these vertices have different open neighbourhoods,
\item each $u_i$, $\binom{k}{\left\lceil\frac{k}{2}\right\rceil}+2\leq i\leq \binom{k}{\left\lceil\frac{k}{2}\right\rceil}+k+1$, to vertex $v_j$ with $j=i-\binom{k}{\left\lceil\frac{k}{2}\right\rceil}-1$ when $k\geq 4$ (and no vertices are connected if $2 \leq k \leq 3$) and
\item each other vertex $u_i\in P$ to some non-empty subset of vertices in $K$ in such a manner that no two vertices of $P$ have the same neighbourhood in $K$.
\end{enumerate}

Denote the graph constructed above by $G$. Step $3$ is possible since $\binom{k}{\left\lceil\frac{k}{2} \right\rceil}=\binom{k}{\left\lfloor\frac{k}{2}\right\rfloor}$ and $|P|\geq\binom{k}{\left\lceil\frac{k}{2}\right\rceil}+1$. Furthermore, because $k\geq 2$, $u_1$ has different neighbourhood in $K$ (compared to the vertices of Step~3). Step $4$ is possible since $$|P|\geq 2^{k-1}-1- \binom{k-1}{\left\lceil\frac{k-1}{2}\right\rceil}+\binom{k}{\left\lceil\frac{k}{2}\right\rceil}+1 = \sum_{i=0}^{k-1}\binom{k-1}{i}- \binom{k-1}{\left\lceil\frac{k-1}{2}\right\rceil}+\binom{k}{\left\lceil\frac{k}{2}\right\rceil}\geq \binom{k}{\left\lceil\frac{k}{2}\right\rceil}+k+1$$ when $k\geq4$. However, when $k=2$ or $k=3$, we have $\left\lfloor\frac{k}{2}\right\rfloor=1$ and thus, by step $3$, also in these cases for each vertex $v\in K$ there exists a vertex of $P$ such that it has only $v$ in its neighbourhood. Step $5$ is possible since we have $|P|\leq2^{|K|}-1 = 2^k-1$.

Let us first show that the location-domination number of $G$ is equal to $a$. We can now choose $K\cup K'$ as a locating-dominating code of size $a$ since each vertex in $P$ has its open neighbourhood with a unique and non-empty intersection with $K$. Furthermore, each vertex in $K'\cup \{u_1\}$ has the same closed neighbourhood and hence, we can have at most one non-codeword among them. Since each vertex $v_j\in K$ neighbours a vertex $u_i$ such that $N(u_i)=\{v_j\}$ for some $j$ and $i$, we have $v_j$ or $u_i$ in the code and hence, we have at least $|K|+|K'|=a$ codewords. Thus, $\gamma^{LD}(G) = a$.

Let us then show that $\gamma^{DLD}(G) = b$. We can choose as our solid-locating-dominating code $C$ the vertex set containing all vertices except for $u_i$ for $2\leq i\leq \binom{k}{\left\lceil\frac{k}{2}\right\rceil}+1$. We have $|C|=b$ (as $|V| = b + \binom{k}{\lceil k/2 \rceil}$) and if $v,v'\not\in C$, then $|N(v)\cap K|=|N(v')\cap K|=\left\lfloor\frac{k}{2}\right\rfloor$ and $N(v)\cap K\neq N(v')\cap K$ and hence, $C$ is a solid-locating-dominating code. In order to show that $\gamma^{DLD}(G) \geq b$, let $X$ be a solid-locating-dominating code in $G$. Observe that if a vertex $v_h \in K'\cup K$ does not belong to $X$, then each vertex in $P$ is in the code since for $u_i\in P$ we have $N(u_i)\subseteq K'\cup K\subseteq N[v_h]$. Moreover, since we have $N[v']\subseteq N[v]$ when $v'\in K'$ and $v\in K'\cup K$, we can have at most one non-codeword in $K'$ and only if there are no non-codewords in $K$. Therefore, we may assume that $K\cup K' \subseteq X$ since otherwise we would have at least $|P|+|K'|=b+\binom{k}{\left\lceil\frac{k}{2}\right\rceil}-k\geq b$ codewords because $\binom{k}{\left\lceil\frac{k}{2}\right\rceil}\geq k$. Furthermore, due to the Sperner's theorem and the independence of the set $P$, we can choose at most $\binom{k}{\left\lceil\frac{k}{2}\right\rceil}$ vertices in $P$ in such a manner that none of their neighbourhoods is contained within another. This gives $\gamma^{DLD}(G)\geq b$.\end{proof}




\begin{thebibliography}{10}
\bibitem{BergeInd}
C.~Berge.
\newblock {\em Graphs and Hypergraphs}.
\newblock North-Holland, Amsterdam, 1976.

\bibitem{BliChe05}
M.~Blidia, M.~Chellali, and O.~Favaron.
\newblock Independence and 2-domination in trees.
\newblock {\em Australas. J. Combin.}, 33:317--327, 2005.

\bibitem{BliCheFaMe07}
M.~Blidia, M.~Chellali, O.~Favaron, and N.~Meddah.
\newblock On {$k$}-independence in graphs with emphasis on trees.
\newblock {\em Discrete Math.}, 307(17-18):2209--2216, 2007.

\bibitem{BliFaLo08}
M.~Blidia, O.~Favaron, and R.~Lounes.
\newblock Locating-domination, $2$-domination and independence in trees.
\newblock {\em Australas. J. Combin.}, 42:309--316, 2008.

\bibitem{CHMPPbec}
J.~C\'aceres, C.~Hernando, M.~Mora, I.~M.~Pelayo, and M.~L.~Puertas.
\newblock Locating-dominating codes: bounds and extremal cardinalities.
\newblock {\em Appl. Math. Comput.}, 220:38--45, 2013.

\bibitem{ChHa1973}
V.~Chv\'atal and P.~Hammer.
\newblock Aggregation of inequalities in integer programming.
\newblock {\em Technical Report, Comp. Sci. Dept. Stanford Univ., Stanford,
  California}, STAN-CS-75-518, 1975.

\bibitem{Di1950}
R.~P. Dilworth.
\newblock A decomposition theorem for partially ordered sets.
\newblock {\em Ann. of Math. (2)}, 51:161--166, 1950.

\bibitem{Kst}
K.~Engel.
\newblock {\em Sperner theory}, volume~65 of {\em Encyclopedia of Mathematics
  and its Applications}.
\newblock Cambridge University Press, Cambridge, 1997.



\bibitem{Trachtenberg}
N.~Fazlollahi, D.~Starobinski, and A.~Trachtenberg.
\newblock Connected identifying codes.
\newblock {\em IEEE Trans. Inform. Theory}, 58(7):4814--4824, 2012.

\bibitem{FoHa1978}
S.~F\"oldes and P.~Hammer.
\newblock The {D}ilworth number of a graph.
\newblock {\em Ann. Discrete Math.}, 2:211--219, 1978.
\newblock Algorithmic aspects of combinatorics (Conf., Vancouver Island, B.C.,
  1976).

\bibitem{HMPng}
C.~Hernando, M.~Mora, and I.~M. Pelayo.
\newblock Nordhaus-{G}addum bounds for locating domination.
\newblock {\em European J. Combin.}, 36:1--6, 2014.

\bibitem{1+ID}
I.~Honkala, T.~Laihonen.
\newblock On a new class of identifying codes in graphs.
\newblock {\em Inform. Process. Lett.}, 102:92--98, 2007.



\bibitem{PathInd}
M.~Jou and J.~Lin.
\newblock Independence numbers in trees.
\newblock {\em Open J. Discrete Math.}, 5, 27-31, 2015.

\bibitem{SelfIDBor}
V.~Junnila, T.~Laihonen.
\newblock Collection of codes for tolerant location.
\newblock {\em Proceedings of the Bordeaux Graph Workshop}, 2016, pp. 176--179.

\bibitem{SelfID}
V.~Junnila, T.~Laihonen.
\newblock Tolerant location detection in sensor networks.
\newblock  Submitted for publication, 2016.

\bibitem{JLLrntcld}
V.~Junnila, T.~Laihonen, and T.~Lehtil{\"a}.
\newblock On regular and new types of codes for location-domination.
\newblock {\em Discrete Appl. Math.}, 247:225--241, 2018.

\bibitem{LT:disj}
M.~Laifenfeld and A.~Trachtenberg.
\newblock Disjoint identifying-codes for arbitrary graphs.
\newblock In {\em Proceedings of International Symposium on Information Theory,
  2005. ISIT 2005}:244--248, 2005.


\bibitem{lowww}
A.~Lobstein.
\newblock Watching systems, identifying, locating-dominating and discriminating codes in graphs, a bibliography.
\newblock Published electronically at \noindent

\href{https://www.lri.fr/\%7Elobstein/debutBIBidetlocdom.pdf}{\texttt{https://www.lri.fr/$\sim$lobstein/debutBIBidetlocdom.pdf}}



\bibitem{Treshold}
N.~V.~R.~Mahadev and U.~N.~Peled.
\newblock {\em Threshold graphs and related topics}, Volume~56 of {\em Ann. Discrete Math.},
\newblock North Holland, page 241, 1995.

\bibitem{MoKel12}
J.~J. Mohana and I.~Kelkarb.
\newblock Restrained 2-domination number of complete grid graphs.
\newblock {\em Int. J. Appl. Math. Comput.},
  4(4), 2012.

\bibitem{RS:LDnumber}
D.~F. Rall and P.~J. Slater.
\newblock On location-domination numbers for certain classes of graphs.
\newblock {\em Congr. Numer.}, 45:97--106, 1984.

\bibitem{Ray}
S.~Ray, D.~Starobinski, A.~Trachtenberg, and R.~Ungrangsi.
\newblock Robust location detection with sensor networks.
\newblock {\em IEEE Journal on Selected Areas in Communications},
  22(6):1016--1025, August 2004.

\bibitem{ShaMaAl17}
R.~Shaheen, S.~Mahfud, and K.~Almanea.
\newblock On the $2$-domination number of complete grid graphs.
\newblock {\em Open J. Discrete Math.}, 7:32--50, 2017.

\bibitem{S:DomLocAcyclic}
P.~J. Slater.
\newblock Domination and location in acyclic graphs.
\newblock {\em Networks}, 17(1):55--64, 1987.

\bibitem{S:DomandRef}
P.~J. Slater.
\newblock Dominating and reference sets in a graph.
\newblock {\em J. Math. Phys. Sci.}, 22:445--455, 1988.

\bibitem{Ssuem}
E.~Sperner.
\newblock Ein {S}atz \"uber {U}ntermengen einer endlichen {M}enge.
\newblock {\em Math. Z.}, 27(1):544--548, 1928.


\end{thebibliography}

\end{document}